\renewcommand{\mathcal}[1]{\mathscr{#1}}
\newcommand{\ta}{\oplus}
\newcommand{\tp}{\odot}
\renewcommand{\phi}{\varphi}
\newcommand{\Sing}{\mathsf{Sing}}
\newcommand{\bb}[1]{\mathbb{#1}}
\newcommand{\p}{\prime}
\newcommand\restr[2]{{
  \left.\kern-\nulldelimiterspace 
  #1 
  \vphantom{\big|} 
  \right|_{#2} 
}}
\newcommand{\Ml}{\mathit{Ml}}
\newcommand{\Mr}{\mathit{Mr}}
\newcommand{\Min}{\mathit{Min}}
\newcommand{\Max}{\mathit{Max}}
\title{Tropical Effective \\ Primary and Dual Nullstellens\"atze\footnote{An extended abstract of a preliminary version~\cite{STACS} appeared in the proceedings of the 32nd International Symposium on Theoretical Aspects of Computer Science (STACS 2015).}}
\author{Dima Grigoriev$^1$, Vladimir V. Podolskii$^{2,3}$\\[3pt]
$^1$\small CNRS, Math\'ematiques, Universit\'e de Lille, France\\
\small \href{mailto:Dmitry.Grigoryev@math.univ-lille1.fr}{Dmitry.Grigoryev@math.univ-lille1.fr}\\
$^2$ \small Steklov Mathematical Institute, Moscow, Russia\\
$^3$ \small National Research University Higher School of Economics, Moscow, Russia\\
 \small   \href{mailto:podolskii@mi.ras.ru}{podolskii@mi.ras.ru}
}
\date{}
\begin{document}

\maketitle

\begin{abstract}
Tropical algebra is an emerging field with a number of applications in various areas of mathematics.
In many of these applications appeal to tropical polynomials allows to study properties
of mathematical objects such as algebraic varieties and algebraic curves from the computational point of view.
This makes it important to study both mathematical and computational aspects of tropical polynomials.

In this paper we prove a tropical Nullstellensatz and moreover we show an effective formulation of this theorem.
Nullstellensatz is a natural step in building algebraic theory of tropical polynomials and its
effective version is relevant for computational aspects of this field.

On our way we establish a simple formulation of min-plus and tropical linear dualities.
We also observe a close connection between tropical and min-plus polynomial systems.




\end{abstract}

\tableofcontents

\section{Introduction}

A \emph{min-plus} or a \emph{tropical semiring} is defined by a set $\bb{K}$, which can be $\mathbb{R}$, $ \bb{R}_\infty = \mathbb{R} \cup \{+\infty\}$,
$\mathbb{Q}$ or $\bb{Q}_\infty = \mathbb{Q} \cup \{+\infty\}$
endowed with two operations, \emph{tropical addition} $\ta$ and \emph{tropical multiplication} $\tp$, defined in
the following way:
$$
x \ta y = \min\{x,y\}, \ \ \ \ x \tp y = x + y.
$$

Tropical polynomials are a natural analog of classical polynomials.
In classical terms tropical polynomial is an expression of the form $f(\vec{x}) = \min_i M_i(\vec{x})$,
where each $M_i(\vec{x})$ is a linear polynomial (a tropical monomial) in variables $\vec{x} = (x_1, \ldots, x_n)$,
and all coefficients of all $M_i$'s are nonnegative integers except for a free coefficient which can be any element of $\bb{K}$.

The degree of a tropical monomial $M$ is the sum of its coefficients (except the free coefficient) and
the degree of a tropical polynomial $f$ denoted by $\deg(f)$ is the maximal degree of its monomials.
A point $\vec{a} \in \bb{K}^n$ is a root of the polynomial $f$ if the minimum $\min_i\{M_i(\vec{a})\}$
is either attained on at least two different monomials $M_i$, or is infinite.
We defer more detailed definitions on the basics of min-plus algebra to Preliminaries.

Tropical polynomials have appeared in various areas of mathematics and found many applications (see, for example,~\cite{IMS2009tropical,MS2015tropical,
sturmfels02equations,Mikhalkin2004survey,
RGST05first_steps,HuberS95polyhedral};
one of the earliest papers in tropical mathematics is~\cite{Vorobyev67}).
An important advantage of tropical algebra is that it makes some properties of classical mathematical objects computationally accessible~\cite{theobald06frontiers,IMS2009tropical,MS2015tropical,sturmfels02equations}.
One of the main goals of min-plus mathematics is to build a theory of tropical polynomials which would help to work with
them and would possibly lead to new results in the related areas. Computational reasons, on the other hand, make
it important to keep the theory maximally computationally efficient.

The best studied so far is the case of tropical linear polynomials and systems of tropical linear polynomials.
For them an analog of the large part of the classical theory of linear polynomials was established.
This includes studies of tropical analogs of the rank of a matrix and the independence of vectors~\cite{DSS05rank,IzhakianR2009rank,AkianGG09rank},
an analog of the determinant of a matrix and its properties~\cite{RGST05first_steps}, an analog of Gauss triangular form~\cite{Grigoriev13complexity}. Also the solvability problem for tropical linear systems was studied from the complexity point of view.
Interestingly, this problem turns out to be polynomially equivalent to the mean payoff games problem~\cite{GP13Complexity} which received considerable attention in computational complexity theory.

For tropical polynomials of arbitrary degree less is known. In~\cite{IzhakianS07} the radical of a tropical
ideal was explicitly described.
In~\cite{RGST05first_steps,ST10SIAM} a tropical version of the Bezout theorem was proved for tropical polynomial systems for the case when the number of polynomials in the system is equal to the number of variables.
In~\cite{theobald06frontiers} it was shown that the solvability problem for tropical polynomial systems is $\NP$-complete.

Along with tropical polynomials there were also studied min-plus polynomials.
Min-plus polynomial is a pair of tropical polynomials $(f(\vec{x}),g(\vec{x}))$.
A point $\vec{a} \in \bb{K}^n$ is a root of the polynomial $(f(\vec{x}),g(\vec{x}))$ if $f(\vec{a}) = g(\vec{a})$.
We call an equation $f(\vec{x}) = g(\vec{x})$ \emph{min-plus polynomial equation}.

Min-plus polynomials were studied mainly for their connections to dynamic programming (see~\cite{butkovic10systems,jukna14circuits}).
As in the case of tropical polynomials here the best studied case is the case of linear min-plus polynomials~\cite{butkovic10systems}.
Also in~\cite{GP13Complexity} the connection between min-plus and tropical linear polynomials was established.

As for the min-plus polynomials of arbitrary degree much less is known. We are only aware of the result on the computational complexity
of the solvability problem of a system of min-plus polynomials: the paper~\cite{GrigorievS14cryptography} shows that this problem is $\NP$-complete.

\paragraph{Our results.}

A next natural step in developing of the theory of tropical polynomials would be an analog of the classical Hilbert's Nullstellensatz,
which for the classical polynomials constitutes one of the cornerstones of algebraic geometry.
Concerning the tropical Nullstellensatz, the problem was already addressed in the paper~\cite{Grigoriev12null}.
That paper came up with a general idea to approach this theorem in the tropical case
through the dual formulation (a naive tropical analog of Nullstellensatz trivially fails, see below). Moreover, in~\cite{Grigoriev12null} there was formulated a conjecture
(which we restate below as Conjecture~\ref{con:nullstellensatz}) capturing the formulation of the tropical dual Nullstellensatz
and this conjecture was proven for the case of polynomials in $1$ variable.
Previously  in~\cite{tabera2008} a tropical dual Nullstellensatz was established for a pair of polynomials
in 1 variable. This result relied on the classical resultant and on the Kapranov's theorem~\cite{EinsiedlerKL07,tabera2008}.

More specifically, in~\cite{Grigoriev12null} there was considered the Macaulay matrix of a system of tropical polynomials $F = \{f_1, \ldots, f_k\}$.
This matrix can be easily constructed from $F$: we just consider all the polynomials of the form $f_i + m_j$ (in classical notation) of degree at most $N$, where $N$ is a parameter and $m_j$ is a tropical monomial.
We put the coefficients of these polynomials in the rows of the matrix, while the columns of the matrix correspond to monomials.
Empty entries of the matrix we fill with $\infty$. The resulting matrix we denote by $M_N$.
In~\cite{Grigoriev12null} it was conjectured that the system of polynomials $F$ has a root iff the tropical linear system with the matrix $M_N$ has a solution, and moreover $N$ can be bounded by some function on $n$, $k$ and the degree of polynomials in $F$ (this refers to effectiveness).

In this paper we prove this conjecture establishing an effective version of the tropical dual Nullstellensatz. 
Surprisingly, it turns out that the cases of tropical semiring with and without $\infty$ differ dramatically.
More specifically, in the case of tropical semirings $\bb{K} = \bb{R}$ or $\bb{K} = \bb{Q}$ we show that $F$ has a root iff the tropical linear system with the
matrix $M_N$ has a solution, where $N = (n+2) k d$, $d$ is the maximal degree of polynomials in $F$, $k$ is the number of polynomials in $F$ and $n$ is the number of variables.
For the case of tropical semirings $\bb{K} = \bb{R}_{\infty}$ or $\bb{K} = \bb{Q}_{\infty}$ we show a similar result, but with $N = (C_1d)^{\min(n,k)+C_2}$ for some constants $C_1$ and $C_2$. Thus for the case without $\infty$ the bound on $N$ is polynomial in $n,k,d$ and for the case with $\infty$ the bound on $N$ is still polynomial in $d$, but is exponential in $n$ and $k$. We give examples showing that our bounds on $N$ are qualitatively optimal, that is the difference of the values of
$N$ in these cases is not an artifact of the proof, but is unavoidable. However, quantitatively there is a gap between upper and lower bounds,
see Section~\ref{sec:results} for details.

Regarding the substantial gap between the required degree in the finite and infinite cases we observe that there is a somewhat similar situation for the classical Nullstellensatz. Indeed, we show that in case of the semiring $\bb{R}$ the bound in a tropical effective Nullstellensatz is roughly equal to the sum of the degrees of the polynomials, while in case of the semiring $\bb{R}_{\infty}$ the bound is roughly equal to the product of the degrees (Theorems~\ref{thm:main} and~\ref{thm.tropical-primary}). We recall that for systems of classical polynomials over an algebraically closed field the bound in the effective Nullstellensatz is roughly equal to the sum of the degrees of polynomials in homogeneous (projective) case~\cite{Lazard77,Lazard81} while the bound is roughly equal to the product of the degrees for arbitrary polynomials (affine case)~\cite{Brownawell87,GuistiHS93,Kollar88}.

As a consequence of the tropical dual Nullstellensatz we obtain its infinite version. Namely, a system of tropical polynomials has a root iff the infinite tropical linear system with the infinite Macaulay matrix $M$ (that is, with no bound on the degree) has a solution. Note that the latter system is well defined since each row of $M$ contains just a finite number of finite entries. This infinite version was conjectured in~\cite{Grigoriev12null}, where it was also observed that a similar infinite dual version of the classical Nullstellensatz holds.

Next we show a primary version of the tropical Nullstellensatz.
We view Nullstellensatz as a duality\footnote{To avoid a confusion we note that we use the word `dual' in two different meanings. First, we use it in the term ``dual Nullstellensatz'' as opposed to the standard version of Nullstellensatz. This means that the dual Nullstellensatz is obtained from the standard Nullstellensatz by the (linear) duality. Second, we use the word `dual' in term ``duality result'' to denote the general type of results. Since the standard Nullstellensatz is a duality result itself, applying the linear duality to it results in a non-duality result. Thus, the dual Nullstellensatz is not a duality result in a proper sense, but rather the word "dual" is used in contrast to the customary Nullstellensatz which we name "primary".} result for systems of polynomials:
if there is \emph{no} root to the system of polynomials then some positive property holds (something \emph{does} exist).
In the classical case this positive property is the containment of $1$ in the ideal generated by polynomials (over an algebraically closed field).
A naive analog does not hold for the tropical case. 
Indeed, for example, the system of tropical polynomials $\{\min(x,0), \min(x,1)\}$ has no roots but the tropical ideal generated by this system 
does not contain a constant polynomial and more generally any polynomial in this tropical ideal has a root.
Basically, the point is that in the tropical semiring there is no subtraction,
so in any algebraic combination of tropical polynomials no monomials cancel out.
To overcome this difficulty we introduce the notion of a \emph{nonsingular} tropical algebraic combination of tropical
polynomials (see the definition in Preliminaries; here we only note that the nonsingularity property is simple and straightforward to check).
For the tropical primary Nullstellensatz we show that there is no root to the tropical polynomial system $F$ iff there is a nonsingular tropical algebraic combination of polynomials in $F$ of degree at most $N$. We show this result for both cases of tropical semiring with and without $\infty$
and the value of $N$ in both cases corresponds the value of $N$ in the tropical dual Nullstellensatz.

To establish the primary Nullstellensatz we need a duality for tropical linear systems.
We show this duality result as a sidestep.
However we note that the duality for tropical linear systems is heavily based on already known results~\cite{akian12mean_payoff} and should be considered more as an observation.

We also prove analogs of all mentioned results for the case of min-plus polynomials.
As another sidestep of our analysis we study the connection between tropical and min-plus systems of polynomials.
We argue that these two settings are very closely connected and that this connection can be used to establish new results in tropical algebra.
The observation is that some results (like linear duality) are easier to obtain for min-plus polynomials and then translate to tropical polynomials,
and some other results (like the dual Nullstellensatz), on the other hand, are easier to obtain for tropical polynomials and then translate to min-plus polynomials.
In our opinion it is fruitful for further development of the theory to consider both settings simultaneously.

\paragraph{Our techniques}

We use the general approach of the paper~\cite{Grigoriev12null} to Nullstellensatz through the dual formulation.

To establish the dual Nullstellensatz we use methods of discrete geometry dealing with integer polyhedra.
First we obtain dual Nullstellensatz for the case without $\infty$.
The case with $\infty$ requires much more additional technical work.

To obtain the primary Nullstellensatz we apply the duality results for tropical linear polynomials.
We note that these results rely on the completely different combinatorial techniques, namely on the connection
to mean payoff games~\cite{akian12mean_payoff}.

\paragraph{Other works on tropical Nullstellensatz} In~\cite{Izhakian08} there was established Nullstellensatz
for the tropical semiring augmented with additional elements (called ghosts).
This result is in the line with other results~\cite{sturmfels02equations} trying to capture tropical mathematics by the means
of the classical ones. However, the tropical semiring augmented with ghosts constitutes (logically) a completely
different model compared to the usual tropical semiring.
Thus our results are incomparable with the ones of~\cite{Izhakian08}.

We also note that~\cite{IzhakianS07} (which has Nullstellensatz in the title) takes completely different view
on Nullstellensatz. We consider Nullstellensatz as a result on the solvability of a system of polynomials,
and~\cite{IzhakianS07} views Nullstellensatz as a result on the structure of the radical of a tropical
ideal. As it can be easily seen, for example, from our results during the translation from the classical world to the tropical one,
the connection between the solvability and the ideal changes drastically (cf. the example $F = \{\min(x,0), \min(x,1)\}$ above). Thus our results are incomparable with the results of~\cite{IzhakianS07} as well.

In~\cite{purbhoo2008} a version of Nullstellensatz was shown for a related structure of amoebas.
However,~\cite{purbhoo2008} proposes a view on Nullstellensatz different from the one suggested in the present paper. An analogous to~\cite{purbhoo2008} result in tropical setting was obtained in~\cite{Bogart2007}.

The rest of the paper is organized as follows. 
In Section~\ref{sec:preliminaries} we introduce main definitions. 
In Section~\ref{sec:results} we state our results. 
In Section~\ref{sec:tropical_dual} we prove the tropical and min-plus dual Nullstellensatz.
In Section~\ref{sec:trop_vs_min-plus} we establish a connection between the sets of roots of tropical and of min-plus polynomial systems.
As an illustration we deduce the min-plus dual Nullstellensatz  with slightly worse parameter from the tropical dual Nullstellensatzs.
In Section~\ref{sec:primary_nullstellensatz} we show the tropical and min-plus primary Nullstellens\"atze.
In Section~\ref{sec:linear_duality} we show the min-plus and tropical linear dualities.
Sections~\ref{sec:trop_vs_min-plus} and~\ref{sec:linear_duality} depend only on Sections~\ref{sec:preliminaries} and~\ref{sec:results}, and can be read independently of other sections.

\section{Preliminaries}
\label{sec:preliminaries}

\paragraph{Tropical and min-plus polynomials.}

A \emph{min-plus} or a \emph{tropical semiring} is defined by a set $\bb{K}$, which can be $\mathbb{R}$, $ \bb{R}_\infty = \mathbb{R} \cup \{+\infty\}$,
$\mathbb{Q}$ or $\bb{Q}_\infty = \mathbb{Q} \cup \{+\infty\}$
endowed with two operations, \emph{tropical addition} $\ta$ and \emph{tropical multiplication} $\tp$, defined in
the following way:
$$
x \ta y = \min\{x,y\}, \ \ \ \ x \tp y = x + y.
$$
Below we mainly consider $\bb{K}=\bb{R}$ and $\bb{K} = \bb{R}_{\infty}$.
The proofs however literally translate to the cases of $\bb{Q}$ and $\bb{Q}_\infty$.

A tropical (or min-plus) monomial in variables $\vec{x} = (x_1, \ldots, x_n)$ is defined as
\begin{equation} \label{eq:monomial}
m(\vec{x}) = c \tp x_1^{\tp i_1} \tp \ldots \tp x_n^{\tp i_n},
\end{equation}
where $c$ is an element of the semiring $\bb{K}$ and $i_1, \ldots, i_n$ are nonnegative integers.
In usual notation the monomial is a linear function
$$
m(\vec{x}) = c + i_1 x_1 + \ldots + i_n x_n.
$$
We denote $\vec{x} = (x_1, \ldots, x_n)$ and for $I = (i_1, \ldots, i_n)$
we introduce the notation
$$
\vec{x}^I = x_1^{\tp i_1} \tp \ldots \tp x_n^{\tp i_n} = i_1 x_1 + \ldots + i_n x_n.
$$
The degree of the monomial $m$ is defined as the sum $i_1 + \ldots + i_n$. We denote this sum by $|I|$.

A tropical polynomial is the tropical sum of tropical monomials
$$
f(\vec{x}) = \bigoplus_i m_i(\vec{x})
$$
with pairwise distinct exponent vectors $I = (i_1,\dots,i_n)$, or in the usual notation $f(\vec{x}) = \min_i m_i(\vec{x})$.
The degree of the tropical polynomial $f$ denoted by $\deg(f)$ is the maximal degree of its monomials.
A point $\vec{a} \in \bb{K}^n$ is a root of the polynomial $f$ if the minimum $\min_i\{m_i(\vec{a})\}$
is either attained on at least two different monomials $m_i$ or is infinite.

Geometrically, over the semiring $\bb{R}$ a tropical polynomial $f(\vec{x})$ is a convex piece-wise linear function over $\bb{R}^{n}$ and the roots of $f$ are non-smoothness points of this function.

We say that $\vec{a}$ is a root for the system of tropical polynomials $F = \{f_1, \ldots, f_k\}$ in variables $\vec{x}$ if $\vec{a}$ is a root to each polynomial $f_i \in F$.

A min-plus polynomial is a pair of tropical polynomials
$$
\left(f(\vec{x}), g(\vec{x})\right).
$$
The degree of a min-plus polynomial is the maximum of degrees of $f$ and $g$.
A point $\vec{a} \in \bb{K}^n$ is a root of this polynomial if the following equality holds: $f(\vec{a}) = g(\vec{a})$.
We call an equation $f(\vec{x}) = g(\vec{x})$ \emph{min-plus polynomial equation}.

\paragraph{Linear polynomials.} An important special case of tropical and min-plus polynomials are linear polynomials
which are just general tropical and min-plus polynomials of degree $1$.
If there is no monomial of degree $0$ (constant monomial) in a linear polynomial, we say that the linear polynomial is \emph{homogeneous}.

It is convenient to express a linear polynomial $f$ in the form
$$
\min_{1 \leq j \leq n} \{a_{j} + x_{j}\}.
$$
In particular, if some variable $x_j$ is not presented in the polynomial for notational convenience we still write it in this expression and just set the corresponding coefficient $a_j$ to $\infty$ (even if we consider the semiring $\bb{R}$). 

The tropical homogeneous linear system
\begin{equation} \label{eq.tropical}
\min_{1 \leq j \leq n} \{a_{ij} + x_{j}\},\ 1 \leq i \leq m,
\end{equation}
then can be naturally associated with its matrix $A \in \bb{R}^{m \times n}_{\infty}$.
We will also use a matrix notation $A \tp \vec{x}$ for such systems.
Thus, we consider tropical linear systems $A \tp \vec{x}$ with the matrices $A \in \bb{R}^{m\times n}_{\infty}$ over both semirings $\bb{R}$ and $\bb{R}_{\infty}$. We assume however that there are no rows and columns in $A$ consisting entirely of $\infty$. 
One can delete each infinite row since any vector is its root. One can also delete each infinite column since this has no effect on the solvability of the system. 
It is convenient to call the roots of tropical linear systems \emph{solutions}.

We note that it is also common to consider tropical linear systems $A \tp \vec{x}$ over $\bb{R}$ with matrices $A \in \bb{R}^{m\times n}$ only. Some of our sidestep results (Corollaries~\ref{cor:min-plus_linear_duality} and~\ref{cor:tropical_linear_duality}) address this setting.

Analogously min-plus homogeneous linear systems
\begin{equation*}
\left(\min_{1 \leq j \leq n} \{a_{ij} + x_{j}\},\  \min_{1 \leq j \leq n} \{b_{ij} + x_{j}\}\right),\ 1 \leq i \leq m,
\end{equation*}
can be associated with a pair of matrices $A$ and $B$ corresponding to the left-hand side
and the right-hand side of an equation. We will also write min-plus homogeneous linear system of equations in a matrix form as $A \tp \vec{x} = B \tp \vec{x}$.
It will be also convenient to consider min-plus linear systems of (componentwise) inequalities $A \tp \vec{x} \leq B \tp \vec{x}$.
It is not hard to see that their expressive power is the same as of equations.
\begin{lemma}
For any min-plus system of linear homogeneous equations there is an equivalent system of min-plus linear inequalities and visa versa.
\end{lemma}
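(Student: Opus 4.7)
The statement is a straightforward equivalence between two representations of the same objects, so the plan is just to give explicit translations in both directions and check that linearity and homogeneity are preserved.

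First, to go from equations to inequalities, I would replace each equation $f(\vec{x})=g(\vec{x})$ in the system by the pair of inequalities $f(\vec{x})\le g(\vec{x})$ and $g(\vec{x})\le f(\vec{x})$. A point satisfies the equation iff it satisfies both inequalities, so the solution sets coincide. Since $f$ and $g$ are (homogeneous) linear tropical polynomials, the new system is still a min-plus homogeneous linear system of inequalities, now with $2m$ rows instead of $m$.

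Second, to go from inequalities to equations, I would use the identity $f\le g \iff f = f\ta g$. Explicitly, an inequality $\min_j\{a_{ij}+x_j\}\le \min_j\{b_{ij}+x_j\}$ holds iff $\min_j\{a_{ij}+x_j\}=\min_j\{\min(a_{ij},b_{ij})+x_j\}$. So I replace each inequality with the equation whose left-hand side is $f$ and whose right-hand side is the tropical sum $f\ta g$, i.e., the coefficient matrix on the right becomes the componentwise minimum of $A$ and $B$. The resulting system is again homogeneous linear (no constant terms appear, since taking the minimum of two linear forms without constants produces a linear form without a constant), and it has the same set of solutions.

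There is no real obstacle here; the only thing to double-check is the preservation of homogeneity and the format of the system (the left-hand and right-hand sides remain expressible as $\min_j\{c_{ij}+x_j\}$ for matrices $C\in \bb{R}_\infty^{m\times n}$, allowing $\infty$ entries for monomials that are absent). Both transformations are polynomial-time and blow up the size by at most a factor of two, which is worth mentioning briefly.
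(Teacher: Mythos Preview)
Your proposal is correct and follows essentially the same argument as the paper: replace each equation by the two opposite inequalities, and replace each inequality $f\le g$ by the equation $f=f\ta g$, whose right-hand side is the linear form with coefficient matrix $\min(A,B)$. The paper's proof is terser but identical in substance.
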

\begin{proof}
Indeed, each min-plus linear equation $L_1(\vec{x}) = L_{2}(\vec{x})$ is equivalent to the pair of min-plus inequalities $L_1(\vec{x}) \geq L_2(\vec{x})$ and $L_1(\vec{x}) \leq L_2(\vec{x})$.
On the other hand min-plus linear inequality $L_1(\vec{x}) \leq L_{2}(\vec{x})$ is equivalent to the min-plus equation $L_{1}(\vec{x}) = \min(L_1(\vec{x}),L_2(\vec{x}))$.
It is not hard to see that the last equation can be transformed to the form of min-plus linear equation.
\end{proof}

There is one more important convention we make concerning the case of a tropical semiring with infinity.
For two matrices $A,B \in \bb{R}_{\infty}^{m\times n}$
we say that the system $A \tp \vec{x} < B \tp \vec{x}$ has a solution if
there is $\vec{x} \in \bb{R}_{\infty}^n$ such that for each row of the system if
one of sides is finite, then the strict inequality holds, but also the case where both sides are equal to $\infty$
is allowed (informally, we can say that $\infty < \infty$).

We also consider non-homogeneous tropical linear systems
\begin{equation}
\min_{1 \leq j \leq n} \{a_{ij} + x_{j}\} \cup \{a_i\},\ 1 \leq i \leq m.
\end{equation}
This system can be naturally associated to the matrix $A \in \bb{K}^{m \times (n+1)}$ and written in the matrix form as
$A \tp (\vec{x},0)$. Analogously, we can consider a non-homogeneous min-plus linear systems $A \tp (\vec{x},0) = B \tp (\vec{x},0)$.
We note that over $\bb{R}^n$ the tropical system $A \tp (\vec{x},0)$ is solvable iff the homogeneous system $A \tp \vec{x}^\prime$ is solvable,
where $\vec{x}^\prime = (\vec{x}, x_{n+1})$. Indeed, having a solution $\vec{x}^\prime$ for the latter system we can add the same number to all coordinates of $\vec{x}^\prime$ to make $x_{n+1} = 0$ and thus obtain a solution of the former system.
The same is true for the min-plus case. But this is not true over $\bb{R}_{\infty}$: a homogeneous system always has a solution (just let $\vec{x}= (\infty, \ldots, \infty)$), but a non-homogeneous system does not always have a solution. However, over $\bb{R}_{\infty}$ we have that $A \tp (\vec{x},0)$ has a solution iff therre is a solution to $A \tp \vec{x}^\prime$ with $x_{n+1}\neq \infty$.

\section{Results Statements}
\label{sec:results}

\subsection{Tropical and Min-plus Nullstellensatz}

\begin{definition} For a given system of tropical polynomials $F = \{f_1, \ldots, f_k\}$ in $n$ variables we introduce its infinite Macaulay matrix $M$.
The columns of $M$ correspond to nonnegative integer vectors $I \in \bb{Z}^n_{+}$ and the rows of $M$ correspond to the pairs $(j,J)$, where $1 \leq j \leq k$ and $J \in \bb{Z}^n_{+}$.
For a given $I$ and $(j,J)$ we let the entry $m_{(j,J), I}$ be equal to the coefficient of the monomial $\vec{x}^I$ in the polynomial
$\vec{x}^J \tp f_{j}$ (if there is no such monomial in the polynomial we assume that the entry is equal to $\infty$).
By $M_N$ we denote the finite submatrix of the matrix $M$ consisting of the columns $I$ such that $|I|=i_1 + \ldots + i_n \leq N$
and the rows that have all their finite entries in these columns.
The tropical linear system $M_N\tp \vec{y}$ will be of interest to us.
Over $\bb{R}_{\infty}$ we consider the non-homogeneous system $M_{N} \tp (0,\vec{y})$. The column corresponding
to the constant monomial is a non-homogeneous column.

For a system of min-plus polynomials $F = \{(f_1,g_1), \ldots, (f_k,g_k)\}$ we analogously introduce the pair of matrices $\Ml$ and $\Mr$
corresponding to the left-hand sides and the right-hand sides of polynomials respectively. In the same way we introduce matrices $\Ml_N$, $\Mr_N$
and the corresponding min-plus linear system $\Ml_N \tp \vec{y} = \Mr_N \tp \vec{y}$. Analogously, for the case of $\bb{R}_\infty$ we consider the non-homogeneous system $\Ml_N \tp (0,\vec{y}) = \Mr_N \tp (0,\vec{y})$.
\end{definition}

In~\cite{Grigoriev12null} there were conjectured three forms of a tropical dual Nullstellensatz.
We state the most strong of them, namely an effective Nullstellensatz conjecture.
\begin{conjecture}[\cite{Grigoriev12null}] \label{con:nullstellensatz}
There is a function
$N$ of $n$ and of $\deg(f_i)$ for $1 \leq i \leq k$ such that a system of polynomials $F = \{f_1, \ldots, f_k\}$
has a common tropical root iff the tropical linear system corresponding
to the matrix $M_N$ has a solution.
\end{conjecture}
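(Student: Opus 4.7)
The easy direction is essentially an unpacking of definitions. Suppose $\vec{a}\in\bb{K}^n$ is a common tropical root of $F=\{f_1,\dots,f_k\}$. Define $\vec{y}$ on monomial-exponents $I$ with $|I|\le N$ by $y_I=\vec{a}^I=i_1a_1+\dots+i_na_n$. Then the tropical inner product of the row $(j,J)$ of $M_N$ with $\vec{y}$ evaluates to
$$\min_{I}\bigl(m_{(j,J),I}+y_I\bigr)=\min_{K\in\supp(f_j)}\bigl(c_{j,K}+\vec{a}^{K+J}\bigr)=\vec{a}^J+f_j(\vec{a}),$$
where $c_{j,K}$ is the coefficient of $\vec{x}^K$ in $f_j$. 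Because $\vec{a}$ is a tropical root of $f_j$, the inner minimum is attained on (at least) two different $K$, so this row's minimum is attained on two different $I$. Hence $\vec{y}$ solves $M_N\tp\vec{y}$ for every $N$.

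For the hard direction, the plan is to extract a common tropical root from a solution $\vec{y}$ of $M_N\tp\vec{y}$, provided $N$ is large enough in terms of $n$, $k$ and $d=\max_j\deg(f_j)$. The natural candidate root is (a small perturbation of) $\vec{a}=(y_{e_1}-y_{\mathbf 0},\dots,y_{e_n}-y_{\mathbf 0})$, or more generally any $\vec{a}$ for which the piecewise-linear concave function $\vec{x}\mapsto\min_I(y_I+I\cdot\vec{x})$ is nonsmooth. One does \emph{not} expect $y_I=\vec{a}^I$ exactly; what one hopes is that for every row $(j,J)$ there is such an $\vec{a}$ making the two monomials attaining the row's minimum also attain the minimum of $f_j$. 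The abundance of shifts $J$ available in $M_N$ (there are $\binom{N+n}{n}$ of them) is the source of rigidity: different rows force different compatibilities between the $y_I$, and enough rows pin $\vec{y}$ down to come from an actual common root.

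The key technical tool is polyhedral geometry over the integer lattice. Associate to each $f_j$ its Newton polytope together with the lower envelope of coefficients, so that the tropical root locus $V(f_j)\subseteq\bb{R}^n$ is a rational polyhedral complex. A solution to $M_N\tp\vec{y}$ gives, for every shift $(j,J)$, a pair of monomials attaining the minimum; this selects a sequence of faces of these polytopes whose combinatorial types must be mutually consistent. Using a pigeonhole/periodicity argument on the finite set of possible face-types and Carath\'eodory-type bounds on lattice points in integer polyhedra, one produces a rational point $\vec{a}$ lying in $\bigcap_j V(f_j)$. The bound $N=(n+2)kd$ in the case $\bb{K}=\bb{R},\bb{Q}$ should arise as exactly such a combinatorial estimate on the number of shifts needed to exhaust all relevant face-types.

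I expect the main obstacle to lie in the case $\bb{K}=\bb{R}_\infty$ (or $\bb{Q}_\infty$). Here the solution $\vec{y}$ and the sought root may take the value $+\infty$ on some coordinates, which corresponds geometrically to ``roots at infinity'' and causes the Newton polytopes to degenerate in ways depending on which variables vanish. This requires one to argue separately over every subset of coordinates sent to $\infty$, which is where the exponential factor $(C_1d)^{\min(n,k)+C_2}$ naturally appears: one needs to exhaust certificates for all such coordinate strata before concluding that no common root exists. Controlling these strata uniformly, and showing that each one admits an $M_N$-obstruction of the claimed size, is the heart of the technical work and the step I expect to be most delicate.
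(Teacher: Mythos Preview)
Your easy direction is correct and is exactly what the paper does.

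The hard direction has a genuine gap. Your plan is that the abundance of rows ``pins $\vec{y}$ down to come from an actual common root'', with the candidate root read off as $\vec{a}=(y_{e_1}-y_{\mathbf 0},\dots,y_{e_n}-y_{\mathbf 0})$ up to perturbation, and a pigeonhole/periodicity argument over finitely many face-types. This intuition is precisely what fails for $n\ge 2$. The paper constructs explicit solutions $\vec{y}$ to the infinite Macaulay system (the ``stepped pyramid'' and ``stripes'' examples) that never become linear however far one goes from the origin, and that are even wildly discontinuous with arbitrarily large jumps between neighbouring lattice points. So $\vec{y}$ need not resemble any $\vec{a}^I$, no periodicity of face-types is forced, and your extraction of $\vec{a}$ from the coordinates of $\vec{y}$ cannot work.

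The paper's mechanism is quite different. One forms the \emph{enveloping polytope} $P_0=(n+2)(P_1+\dots+P_k)$, the scaled Minkowski sum of the extended Newton polytopes of the $f_j$. Carath\'eodory's theorem is indeed invoked, but for a different purpose than you suggest: it shows that for every point $\vec{x}$ on the bottom of $P_0$ and every $j$ there is a translate of $P_j$ that \emph{touches} $P_0$ at $\vec{x}$ (lies inside $P_0$ and meets its boundary only in faces through $\vec{x}$). Given a solution $\vec{y}$, one does not try to linearise $\vec{y}$; instead one considers the set $\Sing(-\phi_{\vec{y}},\beta_{P_0})$ of points where $-\phi_{\vec{y}}$ is tangent to the bottom of $P_0$, and selects a point there lying on a face $Q_0$ of $P_0$ of \emph{maximal} dimension. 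The touching property, combined with the fact that $\vec{y}$ satisfies every row of $M_N$, forces $Q_0$ to contain at least two singular points for each $f_j$. Any supporting hyperplane through $Q_0$ is then a common root. The bound $N=(n+2)\sum_i d_i$ is exactly the degree needed for $P_0$ to fit inside the Macaulay simplex.

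For $\bb{R}_\infty$ the exponential bound does not arise from enumerating subsets of coordinates sent to $\infty$, as you conjecture. The obstacle is rather that $P_0$ need not contain a lattice point over the origin, so the argument above cannot start. The paper replaces $F$ by an auxiliary system $F'$ of tropical algebraic combinations, each having a finite constant term; the factor $(4d)^{\min(n,k)}$ comes from the degree of the monomial shifts used in building $F'$, which in turn is dictated by an iterative bound on the coordinates of a root of $F'$ needed to guarantee that, after setting appropriately chosen coordinates to $\infty$, one recovers a root of $F$.
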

Note that the classical analog of this statement is precisely the effective Nullstellensatz in the dual form
(see~\cite{Grigoriev12null} for the detailed discussion).

In~\cite{Grigoriev12null} the conjecture was proven for the case of $n=1$.
In this paper we prove the general case of the conjecture.

\begin{theorem}[Tropical Dual Nulstellensatz] \label{thm:main}
Consider a system of tropical polynomials $F=\{f_1, \ldots, f_k\}$ in $n$ variables. Denote by $d_i$ the degree of the polynomial $f_i$ and let $d = \max_i d_i$.
\renewcommand*{\theenumi}{\thetheorem(\roman{enumi})}%
  \renewcommand*{\labelenumi}{(\roman{enumi})}%
\begin{enumerate}

\item \label{thm:main1} Over the semiring $\bb{R}$ the system $F$ has a root iff the Macaulay tropical linear system $M_N \tp \vec{y}$ for
$$
N = (n+2)\left(d_1 + \ldots + d_k\right)
$$
has a solution.

\item \label{thm:main2} Over the semiring $\bb{R}_{\infty}$ the system $F$ has a root iff the Macaulay tropical non-homogeneous linear system $M_N \tp (0,\vec{y})$ for
$$
N = \poly(n,k,d) \left(4d\right)^{\min(n,k)}
$$
has a solution.

\end{enumerate}
\end{theorem}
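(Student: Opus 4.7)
The easy direction is immediate: given a common root $\vec{a}\in\bb{K}^n$ of $F$, the assignment $y_I := \vec{a}^I$ (i.e., $I\cdot\vec{a}$ in classical notation) makes each row $(j,J)$ of $M_N$ evaluate to
\[
\min_{I'\in\supp(f_j)}\bigl\{c_{j,I'}+y_{I'+J}\bigr\} \;=\; \vec{a}\cdot J + f_j(\vec{a}),
\]
whose minimum is attained on the same (two or more) monomials at which $f_j(\vec{a})$ is attained, or is $\infty$. In the $\bb{R}_\infty$ case an analogous construction, with a shift making the constant column $0$, gives a solution of the non-homogeneous system $M_N\tp(0,\vec{y})$.

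For the hard direction over $\bb{R}$, suppose $\vec{y}$ solves $M_N\tp\vec{y}$. The central idea is to extract a candidate root from the piecewise linear structure of $\vec{y}$. Consider the lower convex hull of the lifted point configuration $\{(I,y_I):|I|\le N\}\subset\bb{R}^{n+1}$; it induces a coherent polyhedral subdivision of the simplex $\Delta_N=\{\vec{x}\in\bb{R}_+^n:|\vec{x}|\le N\}$ into cells $C_1,\dots,C_r$, where each $C_s$ carries an affine function $\vec{x}\mapsto c_s+\vec{a}^{(s)}\cdot\vec{x}$ that coincides with $y_I$ at the integer points of $C_s$ lying on the hull. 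The candidate common roots of $F$ are the slopes $\vec{a}^{(s)}$. The root property will follow if one locates a cell $C_s$ and shifts $J_j\in\bb{Z}_+^n$ such that $J_j+\supp(f_j)\subseteq C_s$ and each $I'+J_j$ is a hull-point, for every $j$: then the row $(j,J_j)$ of $M_N$ rewrites as
\[
c_s+\vec{a}^{(s)}\cdot J_j + \min_{I'\in\supp(f_j)}\bigl\{c_{j,I'}+\vec{a}^{(s)}\cdot I'\bigr\},
\]
and its min being attained at two $I'$'s is exactly the statement that $\vec{a}^{(s)}$ is a root of $f_j$.

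The combinatorial heart of the argument is to show that for $N=(n+2)\sum_j d_j$ such a cell must exist. This is a discrete-geometric pigeonhole: the simplex $\Delta_N$ is so large relative to the Minkowski-type width $\sum_j d_j$ of the $\supp(f_j)$'s that any coherent subdivision of it must contain a cell of sufficient inradius, with enough hull-integer points to carry translates of all $\supp(f_j)$ simultaneously; the dimension-dependent factor $n+2$ comes from averaging over the $n$-dimensional cells and the extra room needed to place all shifts $J_j$ consistently. Over $\bb{R}_\infty$ the same geometric picture applies, but entries of $\vec{y}$ may equal $\infty$, corresponding to absent monomials, and a root $\vec{a}$ may have $\infty$ coordinates. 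The plan is an induction on $\min(n,k)$: in the ``generic'' case the restriction of $\vec{y}$ to its finite entries defines a subsystem to which the $\bb{R}$-argument applies; otherwise the pattern of $\infty$ entries forces some variable $x_i$ to equal $\infty$ in any root (or some $f_j$ to be trivially satisfied), after which one eliminates it and recurses on a subsystem with one fewer variable or polynomial, at the cost of multiplying $N$ by an $O(d)$ factor -- explaining the bound $\poly(n,k,d)\cdot(4d)^{\min(n,k)}$.

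The principal obstacle is the discrete-geometric lemma behind part~\ref{thm:main1}: producing a cell of a coherent subdivision of $\Delta_N$ that simultaneously absorbs a translate of each $\supp(f_j)$ whose integer points lie on the hull. Subdivisions can be highly irregular, with many small cells crowded near the boundary, and balancing cell sizes against the total combinatorial complexity requires a careful averaging argument on integer polyhedra. In the $\bb{R}_\infty$ case the additional difficulty is making the elimination step quantitative: one must control the blow-up in $N$ when discarding rows and columns from $M_N$, and verify that the reduced Macaulay matrix inherits a solvable tropical linear system -- which is what ultimately dictates the exponential dependence on $\min(n,k)$.
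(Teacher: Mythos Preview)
Your easy direction is fine and matches the paper.

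For the hard direction over $\bb{R}$, your approach diverges from the paper's and contains a genuine gap. You propose to take the regular subdivision of $\Delta_N$ induced by the lower hull of $\{(I,y_I)\}$ and locate a cell large enough to absorb translates of all $\supp(f_j)$, with all relevant lattice points on the hull. The pigeonhole claim --- that for $N=(n+2)\sum d_j$ any coherent subdivision of $\Delta_N$ has a cell of the required inradius --- is false as stated: nothing prevents every cell from being a thin sliver. More to the point, the paper exhibits explicit solutions $\vec{y}$ of the Macaulay system (the ``stepped pyramid'' and ``stripes'' examples) that are wildly non-linear and whose induced subdivisions are highly fragmented; no cell of the subdivision of $\vec y$ need be large. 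You correctly flag this as the principal obstacle, but you do not resolve it, and the Macaulay constraint on $\vec y$ does not by itself force a large cell.

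The paper sidesteps this by never subdividing via $\vec{y}$. Instead it builds a \emph{fixed} enveloping polytope $P_0=(n+2)(P_1+\cdots+P_k)$, the scaled Minkowski sum of the extended Newton polytopes of the $f_j$. The key lemma (proved via Carath\'eodory's theorem, which is where the factor $n+2$ actually enters) is that every boundary point of $P_0$ is touched by a translate of each $P_j$ that meets $\partial P_0$ only in faces through that point. One then looks at the set $\Sing(\phi_{\vec y},\beta_{P_0})$ of points where (a shift of) $-\phi_{\vec{y}}$ meets the bottom of $P_0$, picks a point $\vec x$ whose minimal containing face $Q_0$ of $P_0$ has maximal dimension, and shows that any supporting hyperplane of $Q_0$ is singular to every $\phi_j$. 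The root is read off a face of $P_0$, not a cell of $\vec y$'s subdivision; the irregularity of $\vec y$ is irrelevant because only a single touching point is needed.

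For $\bb{R}_\infty$, your induction-by-elimination sketch is also off track. The paper does not eliminate a variable or a polynomial at each step. Rather, it manufactures an auxiliary system $F'$ in the \emph{same} variables, each polynomial being a tropical sum $f_1\oplus\bigoplus_j (-C)\odot x_j^{\alpha}\odot f_i$ (plus perturbed siblings $f'_{ij}$), with $\alpha\approx(4d)^{\min(n,k)}$ and $C$ chosen accordingly; this forces finite constant terms so that the $\bb{R}$-argument with $P_0'$ applies. The exponential blow-up comes from proving that a root of $F'$ yields a root of $F$: one analyzes which coordinates of the candidate root must be set to $\infty$ via an increasing chain $S_0\subset S_1\subset\cdots$ of coordinate sets of length at most $\min(n,k)$, bounding $|b_j|$ inductively along the chain. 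That chain, not a variable-elimination recursion on the system itself, is the source of $(4d)^{\min(n,k)}$.
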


Note that one direction of the theorem is simple. Indeed, if the system of polynomials has a root $\vec{a} \in \bb{R}^n$ then it is not hard to see that there is a solution to $M_N \tp \vec{y}$. Indeed, note that the coordinates $y_I$ of $\vec{y}$ correspond to monomials $\vec{x}^I$, so let $y_I = \vec{a}^I$. Since each row of $M_N$ correspond to the polynomial of the form $\vec{x}^J \tp f_j(\vec{x})$ and $\vec{a}$ is a root of any such polynomial, we have that $\vec{y}$ satisfies all rows of $M_N$. Thus the essence of the theorem is to prove the other direction. The same argument works over $\bb{R}_{\infty}$

We note that we can also consider an infinite Macaulay tropical linear system $M \tp \vec{y}$.
It is well defined since each row of $M$ has only finite number of finite entries.
As a corollary of the previous theorem we deduce an infinite version of the tropical dual Nullstellensatz.

\begin{corollary} \label{cor:tropical_infinite}
A system of tropical polynomials $F=\{f_1, \ldots, f_k\}$ of $n$ variables has a root iff the infinite Macaulay tropical linear system with the matrix $M$ has a solution.
The result holds for both $\bb{R}$ and $\bb{R}_\infty$ semirings.
\end{corollary}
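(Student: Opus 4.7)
The plan is to deduce Corollary~\ref{cor:tropical_infinite} directly from Theorem~\ref{thm:main}, by passing between the infinite and the finite Macaulay systems. Throughout, over $\bb{R}_\infty$ the infinite system should be read in the non-homogeneous form $M \tp (0, \vec{y})$, mirroring the convention of Theorem~\ref{thm:main2}; otherwise the homogeneous infinite system would be trivially solved by $\vec{y} = (\infty, \ldots, \infty)$.

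For the forward direction, given a root $\vec{a} \in \bb{K}^n$ of $F$, I set $y_I := \vec{a}^I$ for every $I \in \bb{Z}^n_+$. Each row of $M$ encodes a polynomial $\vec{x}^J \tp f_j$, and tropical multiplication of $f_j$ by $\vec{x}^J$ shifts every monomial value by the same constant $\vec{a}^J$. Such a shift preserves both conditions defining a root, namely that the minimum is attained at two distinct monomials and that it equals $\infty$. Hence $\vec{a}$ is a root of $\vec{x}^J \tp f_j$ for every pair $(j, J)$, so $\vec{y}$ solves $M \tp \vec{y}$. Over $\bb{R}_\infty$ the constant-monomial coordinate is $y_{\vec{0}} = \vec{a}^{\vec{0}} = 0$, giving a solution of the non-homogeneous system as required.

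For the backward direction, let $N$ be the bound furnished by Theorem~\ref{thm:main} for the given semiring and parameters $n$, $k$, $d$. Given a solution $\vec{y}$ of the infinite system, I restrict to the coordinates $I$ with $|I| \leq N$. By the very definition of $M_N$, the rows it retains are precisely those whose finite entries are supported on these columns, so truncation drops only infinite entries and cannot change the minimum in any retained row. The restricted $\vec{y}$ therefore solves $M_N \tp \vec{y}$ (respectively $M_N \tp (0, \vec{y})$ over $\bb{R}_\infty$, since the coordinate $y_{\vec{0}} = 0$ is preserved under restriction), and Theorem~\ref{thm:main} then produces a root of $F$.

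The main technical point, rather than an obstacle, is to keep the conventions straight for $\bb{R}_\infty$: without fixing $y_{\vec{0}} = 0$ the homogeneous infinite system is vacuously solvable, which would render the corollary trivial. Once the non-homogeneous reading is adopted, the truncation step is immediate from the definition of $M_N$, and all of the serious work is absorbed by Theorem~\ref{thm:main}.
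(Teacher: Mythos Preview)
Your proposal is correct and follows essentially the same approach as the paper: the forward direction uses $y_I = \vec{a}^I$ exactly as in the discussion after Theorem~\ref{thm:main}, and the backward direction restricts a solution of the infinite system to the columns with $|I|\le N$ and invokes Theorem~\ref{thm:main}. Your explicit treatment of the $\bb{R}_\infty$ case---insisting on the non-homogeneous reading $M\tp(0,\vec{y})$ so that the all-$\infty$ vector is excluded---is a point the paper leaves implicit, but it is the right clarification and does not change the argument.
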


The proof in the simple direction is the same as for Theorem~\ref{thm:main} and the hard part of the corollary follows trivially from Theorem~\ref{thm:main}.

We show a dual Nullstellensatz for the min-plus case.

\begin{theorem}[Min-Plus Dual Nullstellensatz] \label{thm.min-plus}
Consider a system of min-plus polynomials $F = \{(f_1,g_1), \ldots, (f_k,g_k)\}$ in $n$ variables. Let $d_i$ be the degree of the polynomial $(f_i, g_i)$ and let $d = \max_i d_i$.
\renewcommand*{\theenumi}{\thetheorem(\roman{enumi})}%
  \renewcommand*{\labelenumi}{(\roman{enumi})}%
\begin{enumerate}

\item \label{thm.min-plus1} Over the semiring $\bb{R}$ the system $F$ has a root iff the Macaulay min-plus linear system $\Ml_N \tp \vec{y} = \Mr_N \tp \vec{y}$ for
$$
N = (n+2)\left(d_1 + \ldots + d_k\right)
$$
has a solution.

\item \label{thm.min-plus2} Over the semiring $\bb{R}_\infty$ the system $F$ has a root iff the non-homogeneous Macaulay min-plus linear system $\Ml_N \tp (0,\vec{y}) = \Mr_N \tp (0,\vec{y})$ for
$$
N = \poly(n,k,d) \left(4d\right)^{\min(n,k)}
$$
has a solution.

\end{enumerate}
\end{theorem}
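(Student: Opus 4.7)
The easy direction is immediate. If $\vec{a} \in \bb{K}^n$ is a root of $F$, set $y_I = \vec{a}^I$; then for each pair of rows in $\Ml_N$ and $\Mr_N$ indexed by $(j,J)$, the left-hand side evaluates to $\vec{a}^J \tp f_j(\vec{a})$ and the right-hand side to $\vec{a}^J \tp g_j(\vec{a})$, and these agree since $f_j(\vec{a}) = g_j(\vec{a})$. Over $\bb{R}_\infty$ the same argument applies to the non-homogeneous version after normalizing the extra coordinate to $0$.

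For the hard direction, my plan is to mirror the proof of Theorem~\ref{thm:main} directly rather than to deduce the result from it via the translation between tropical and min-plus systems (such a reduction, developed in Section~\ref{sec:trop_vs_min-plus}, yields slightly weaker parameters than those claimed here). The key observation is a structural parallel: a tropical polynomial $f$ vanishes at $\vec{a}$ iff the minimum among its monomial values is attained at least twice, while a min-plus polynomial $(f,g)$ vanishes at $\vec{a}$ iff the minimum over monomials of $f$ equals the minimum over monomials of $g$. In both cases the constraint amounts to a combinatorial selection of two active monomials together with the geometric requirement that their associated linear functions coincide at $\vec{a}$, and the discrete-geometric bounds on integer polyhedra driving the proof of Theorem~\ref{thm:main} do not distinguish between "the two monomials belong to the same polynomial" and "one belongs to $f_j$ and the other to $g_j$."

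Accordingly, for part~\ref{thm.min-plus1} I plan to run the argument of Theorem~\ref{thm:main}\ref{thm:main1} on the pair $(\Ml_N,\Mr_N)$ with $N = (n+2)(d_1 + \cdots + d_k)$: from a solution $\vec{y}$ of the Macaulay min-plus linear system one reconstructs a candidate $\vec{a}$ by the same polyhedral construction used in the tropical case, and verifies that the equality $f_j(\vec{a}) = g_j(\vec{a})$ follows from the same integer-polyhedral combinatorics that previously ensured the minimum-attained-twice property. Part~\ref{thm.min-plus2} will proceed analogously for the non-homogeneous Macaulay system over $\bb{R}_\infty$.

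The main obstacle, as in the tropical case, is part~\ref{thm.min-plus2}. Bookkeeping of $\infty$-valued entries is substantially more delicate for pairs of matrices $(\Ml_N,\Mr_N)$: one side of a min-plus equation may evaluate to $\infty$ at $\vec{a}$ while the other is finite, yet the equation still demands that the two sides agree (possibly both equal to $\infty$, in the sense allowed by the paper's convention on strict inequalities). Extending the finiteness analysis of Theorem~\ref{thm:main}\ref{thm:main2} so that it simultaneously controls the finite/infinite pattern on the $\Ml$- and $\Mr$-sides, without inflating $N$ beyond the claimed $\poly(n,k,d)(4d)^{\min(n,k)}$ bound, is where I expect the bulk of the work to lie.
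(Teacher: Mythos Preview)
Your proposal takes essentially the same approach as the paper: the proof of Theorem~\ref{thm.min-plus} is obtained by rerunning the proof of Theorem~\ref{thm:main} with a min-plus modification of the notion of singularity, rather than by reduction through Section~\ref{sec:trop_vs_min-plus}. Your structural observation that ``one active monomial belongs to $f_j$ and the other to $g_j$'' is exactly right; the paper implements this by assigning to each point $(I,\phi_i(I))$ of the graph $G(\phi_i)$ a color (black if the minimum coefficient of $\vec{x}^I$ comes from $f_i$, white if from $g_i$, possibly both), and redefining ``singular'' to mean that $\Sing(\phi,\psi)$ contains at least one point of each color (see Remark~\ref{rk:colors}). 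With this device the enveloping-polytope argument and Lemma~\ref{lem:facet} go through verbatim: one simply checks that both a black and a white singular point land in the face $Q_0$.

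One correction to your expectations for part~\ref{thm.min-plus2}: the extra work is not in a separate finite/$\infty$ analysis of the $\Ml$- and $\Mr$-sides. The construction of the auxiliary system $F'$ in Subsection~\ref{sec:infinite case} and the proof of Lemma~\ref{lem:F_vs_Fprime} are phrased entirely in terms of the functions $\phi_i$ and $\phi_i'$, so they carry over with only obvious changes; the single substantive modification is to replace every occurrence of ``has only one singular point'' by ``has singular points of only one color.'' The $\infty$-bookkeeping is thus no worse than in the tropical case, and the bound $N = \poly(n,k,d)(4d)^{\min(n,k)}$ is preserved without additional effort.
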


As in the tropical case an infinite version of the min-plus dual Nullstellensatz follows.

\begin{corollary}
Consider the system of min-plus polynomials $F = \{f_1=g_1, \ldots, f_k=g_k\}$ of $n$ variables.
The system $F$ has a root iff the infinite Macaulay min-plus linear system with the pair of matrices $(\Ml, \Mr)$
has a solution.
The result holds for both $\bb{R}$ and $\bb{R}_\infty$ semirings.
\end{corollary}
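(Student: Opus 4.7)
The strategy is to mirror the proof sketch given for Corollary~\ref{cor:tropical_infinite} in the tropical case, reducing both directions to Theorem~\ref{thm.min-plus}. No new ideas are needed, only correct bookkeeping of rows and columns of the infinite Macaulay pair $(\Ml, \Mr)$.

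For the easy direction I would take a common root $\vec{a} \in \bb{K}^n$ and define $y_I = \vec{a}^I = i_1 a_1 + \cdots + i_n a_n$ for every $I \in \bb{Z}_{+}^n$. By construction, a row of $(\Ml, \Mr)$ indexed by $(i, J)$ encodes the equation $\vec{x}^J \tp f_i(\vec{x}) = \vec{x}^J \tp g_i(\vec{x})$, so evaluating the two linear forms on $\vec{y}$ gives $\vec{a}^J + f_i(\vec{a})$ and $\vec{a}^J + g_i(\vec{a})$; these agree because $\vec{a}$ is a root of $(f_i, g_i)$. The argument is identical over $\bb{R}$ and $\bb{R}_\infty$, and $\vec{y}$ satisfies $y_0 = 0$ automatically.

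For the hard direction I would take a solution $\vec{y}$ of the infinite system and truncate. For every $N$, by definition of $\Ml_N$ and $\Mr_N$ the rows kept are exactly those whose finite entries lie in columns with $|I| \le N$, so the restriction $(y_I)_{|I| \le N}$ solves $\Ml_N \tp \vec{y}^\p = \Mr_N \tp \vec{y}^\p$. Choosing $N$ equal to the bound from Theorem~\ref{thm.min-plus} then yields a common root of $F$.

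The only subtlety is in the $\bb{R}_\infty$ case, where Theorem~\ref{thm.min-plus2} is stated for the non-homogeneous system $\Ml_N \tp (0, \vec{y}) = \Mr_N \tp (0, \vec{y})$. In the easy direction this is free because we already have $y_0 = 0$. In the hard direction, I would first shift $\vec{y}$ coordinate-wise by $-y_0$, which is legitimate whenever $y_0 \ne \infty$ and reduces to $y_0 = 0$ as explained at the end of Section~\ref{sec:preliminaries}; the degenerate case $y_0 = \infty$ forces $y_I = \infty$ on every row with a finite entry in the constant column and so is either excluded by the standing convention or handled as a trivial subcase. Beyond this minor bookkeeping I expect no genuine obstacle.
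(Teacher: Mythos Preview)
Your proposal is correct and follows exactly the approach the paper indicates: the easy direction is the same substitution $y_I=\vec{a}^I$ used for Theorem~\ref{thm.min-plus}, and the hard direction is the trivial truncation to $M_N$ followed by an appeal to Theorem~\ref{thm.min-plus}. Your handling of the $\bb{R}_\infty$ subtlety is also right; by the paper's convention the Macaulay system over $\bb{R}_\infty$ is taken non-homogeneous, so the case $y_0=\infty$ is excluded outright and no separate argument is needed.
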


We provide examples showing that our bounds on $N$ are qualitatively tight.
Namely for the semiring $\bb{R}$ we construct a family $F$ of $(n+1)$ tropical (or min-plus)
polynomials of $n$ variables and of degree $d$ such that $F$ has no root, but the Macaulay tropical (or min-plus) linear system for $N = (d-1)(n-1)$ has a solution.
For the semiring $\bb{R}_\infty$ for any $d>1$ we construct a system $F$ of $n+1$ tropical (or min-plus) polynomials of $n+1$ variables and
of degree $d$ such that $F$ has no root, but the Macaulay tropical (or min-plus) linear system for $N = d^{n-1} - 1$ has a solution.

We note that quantitatively there is a room for improvement between our lower and upper bounds on $N$.
The gap is more noticeable in the case of the semiring $\bb{R}$. Assuming for the sake of simplicity that $n \approx k$
our upper bound gives $N \sim d n^2$ and our lower bound gives $N \sim dn$.
Thus we can formulate the following open problem.

\begin{openproblem}
Close the gap between upper and lower bounds on $N$ in the tropical Nullstellensatz.
\end{openproblem}

Next we establish the Nullstellensatz
in a more standard primary form.

We start with a more intuitive min-plus Nullstellensatz.
\begin{theorem}[Min-Plus Primary Nullstellensatz] \label{thm.min-plus-primary}
Consider a system of min-plus polynomials $F = \{(f_1,g_1), \ldots, (f_k,g_k)\}$ in $n$ variables. Denote by $d_i$ the degree of the polynomial $(f_i, g_i)$ and let $d = \max_i d_i$.
In algebraic combinations $(f,g)$ of the polynomials in $F$ we allow to use not only polynomials $(f_i,g_i)$,
but also $(g_i, f_i)$.
\renewcommand*{\theenumi}{\thetheorem(\roman{enumi})}%
  \renewcommand*{\labelenumi}{(\roman{enumi})}%
\begin{enumerate}
\item Over the semiring $\bb{R}$ the system $F$ has no root
iff we can construct an algebraic min-plus combination $(f, g)$ of $F$ with degree at most
$$
N = (n+2)\left(d_1 + \ldots + d_k\right)
$$
such that for each monomial $m = x_1^{\tp j_1} \tp \ldots \tp x_n^{\tp j_n}$
its coefficient in $f$ is greater than its coefficient in $g$. 

\item Over the semiring $\bb{R}_\infty$ the system $F$ has no root iff we can construct an algebraic combination $(f,g)$ of $F$ with degree at most
$$
N = \poly(n,k,d) \left(4d\right)^{\min(n,k)}
$$
such that for each monomial $m = x_1^{\tp j_1} \tp \ldots \tp x_n^{\tp j_n}$ its coefficient in $f$ is greater than its coefficient in $g$ and with an additional property that the constant term in $g$ is finite.
\end{enumerate}
\end{theorem}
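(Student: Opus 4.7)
The plan is to deduce this primary form from the dual Nullstellensatz (Theorem~\ref{thm.min-plus}) via the min-plus linear duality that will be established in Section~\ref{sec:linear_duality}. The dual Nullstellensatz translates ``$F$ has no root'' into the unsolvability of a finite Macaulay min-plus linear system, and linear duality converts such unsolvability into a positive dual witness; reading this witness back through the Macaulay correspondence gives exactly the sort of algebraic combination asserted.

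The easy direction (``combination exists implies $F$ has no root'') is a direct calculation. Since $(f,g)$ is obtained from pairs $(f_i,g_i)$ and $(g_i,f_i)$ by monomial multiplication and tropical summation, any root $\vec{a}$ of $F$ satisfies $f_i(\vec{a})=g_i(\vec{a})$ for every $i$, and hence $f(\vec{a})=g(\vec{a})$. On the other hand, the hypothesized strict monomial-wise inequality between the coefficients of $f$ and $g$ forces $f(\vec{a})>g(\vec{a})$ at every point: picking the monomial $\vec{x}^{I^*}$ that realizes $f(\vec{a})$, its coefficient in $f$ is finite and strictly larger than its (necessarily finite) coefficient in $g$, so $g(\vec{a})$ is bounded above by $c^g_{I^*}+\vec{a}^{I^*}<c^f_{I^*}+\vec{a}^{I^*}=f(\vec{a})$. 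In the $\bb{R}_\infty$ case, the assumed finiteness of the constant term of $g$ excludes the degenerate possibility $g\equiv\infty$ and correctly handles points with some infinite coordinates.

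For the converse, assume $F$ has no root. By Theorem~\ref{thm.min-plus} the Macaulay min-plus system $\Ml_N \tp \vec{y} = \Mr_N \tp \vec{y}$, respectively its non-homogeneous analog $\Ml_N \tp (0,\vec{y}) = \Mr_N \tp (0,\vec{y})$ over $\bb{R}_\infty$, has no solution for the claimed $N$. I would then invoke min-plus linear duality from Section~\ref{sec:linear_duality}: unsolvability of such a system is equivalent to the existence of a tropical linear combination of its rows, in which each individual row may be used as is or with its two sides swapped, producing a new ``equation'' in which the left-hand coefficient vector strictly dominates the right-hand coefficient vector column by column. Because each row $(i,J)$ of $(\Ml_N,\Mr_N)$ is precisely the coefficient vector of the pair $(\vec{x}^J \tp f_i, \vec{x}^J \tp g_i)$, this dual witness translates into an algebraic combination $(f,g) = \bigoplus_{i,J}(\lambda_{i,J}\tp \vec{x}^J)\tp (h_{i,J},k_{i,J})$ of degree at most $N$, with $(h_{i,J},k_{i,J})\in\{(f_i,g_i),(g_i,f_i)\}$, and with exactly the required monomial-wise strict inequality between the coefficients of $f$ and $g$.

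The main obstacle I expect lies in the $\bb{R}_\infty$ case. Moving to a non-homogeneous Macaulay system is precisely what forces the extra condition that the constant term of $g$ be finite: the constant (non-homogeneous) column serves as an anchor that the dual combination must engage, since otherwise all-$\infty$ vectors would trivially ``solve'' the homogeneous reduction. The delicate points are therefore to set up the min-plus linear duality for the non-homogeneous variant so that this anchoring propagates cleanly to the finiteness of the constant term of $g$, and to check that the row-swaps permitted by the duality correspond exactly to the liberty of inserting $(g_i,f_i)$ in place of $(f_i,g_i)$ in the algebraic combination.
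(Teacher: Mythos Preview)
Your proposal is correct and follows essentially the same route as the paper: reduce via the min-plus dual Nullstellensatz to unsolvability of the Macaulay linear system, then apply the min-plus linear duality (the paper does this by rewriting the equality as the stacked inequality $\bigl(\begin{smallmatrix}\Ml_N\\ \Mr_N\end{smallmatrix}\bigr)\tp\vec{x}\le\bigl(\begin{smallmatrix}\Mr_N\\ \Ml_N\end{smallmatrix}\bigr)\tp\vec{x}$ and invoking Corollary~\ref{cor:min-plus_linear_duality}), and reinterpret the dual witness as the desired algebraic combination, with the non-homogeneous column in the $\bb{R}_\infty$ case yielding the finite constant term in $g$. Your explicit treatment of the easy direction and your identification of the row-swap with the freedom to use $(g_i,f_i)$ are exactly right.
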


For the tropical case we will need the
following definition.

\begin{definition}
For a system of tropical polynomials $F = \{f_1, \ldots, f_k\}$ and tropical monomials
$m_1, \ldots, m_K$ the algebraic combination
$$
g = \bigoplus_{j=1}^{K} g_j,
$$
where
$$
g_j =  m_j \tp f_{i_j},
$$
is called nonsingular if the following two properties hold:
\begin{itemize}
\item for each monomial $m$ of $g$ there is a (unique) $1\le l(m) \le K$ such that the coefficient of $m$ in the polynomial $g_{l(m)}$ is less than the coefficients of $m$ in all other polynomials $g_j$ for $j\neq l(m)$;
\item for different $m$ and $m'$ we have $l(m)\neq l(m')$.
\end{itemize}
\end{definition}

Now we can formulate the tropical Nullstellensatz in a primary form.
\begin{theorem}[Tropical Primary Nullstellensatz] \label{thm.tropical-primary}
Consider a system of tropical polynomials $F = \{f_1, \ldots, f_k\}$ in $n$ variables. Denote by $d_i$ the degree of the polynomial $f_i$ and let $d = \max_i d_i$.
\renewcommand*{\theenumi}{\thetheorem(\roman{enumi})}%
  \renewcommand*{\labelenumi}{(\roman{enumi})}%
\begin{enumerate}
\item The system $F$ has no root over $\bb{R}$
iff there is a nonsingular algebraic combination $g$ of $F$ with degree at most
$$
N = (n+2)\left(d_1 + \ldots + d_k\right)
$$

\item The system $F$ has no root over $\bb{R}_\infty$
iff there is a nonsingular algebraic combination $g$ of $F$ with degree at most
$$
N = \poly(n,k,d) \left(4d\right)^{\min(n,k)}
$$
and with a finite constant monomial.
\end{enumerate}
\end{theorem}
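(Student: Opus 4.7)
The plan is to deduce the primary Nullstellensatz from the tropical dual Nullstellensatz (Theorem~\ref{thm:main}) together with the tropical linear duality announced in Section~\ref{sec:linear_duality}, following the overall strategy stated in the introduction under ``Our techniques''.

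\emph{Easy direction.} I would first verify that a nonsingular combination precludes a common root. Assume $g = \bigoplus_{j=1}^{K} g_j$ is nonsingular with $g_j = m_j \tp f_{i_j}$, and suppose for contradiction that $\vec{a}$ is a common root of $F$. Then $\vec{a}$ is a root of every $g_j$; pick any $j^*$ realizing $g(\vec{a}) = \min_j g_j(\vec{a})$. By the root condition on $g_{j^*}$, at least two distinct exponent vectors $I_1 \neq I_2$ attain the minimum of $g_{j^*}(\vec{a})$. Writing $c_I^{(j)}$ for the coefficient of $\vec{x}^I$ in $g_j$ and $c_I = \min_j c_I^{(j)}$ for its coefficient in $g$, the chain $c_{I_t} + \vec{a}^{I_t} \leq c_{I_t}^{(j^*)} + \vec{a}^{I_t} = g(\vec{a})$ together with $g(\vec{a}) = \min_I (c_I + \vec{a}^I)$ forces $c_{I_t} = c_{I_t}^{(j^*)}$ for $t = 1,2$. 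The uniqueness clause of nonsingularity then gives $l(I_1) = l(I_2) = j^*$, contradicting the injectivity clause.

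\emph{Hard direction.} Assume $F$ has no root. Theorem~\ref{thm:main} supplies the unsolvability of the tropical linear system $M_N \tp \vec{y}$ over $\bb{R}$ (respectively the non-homogeneous $M_N \tp (0,\vec{y})$ over $\bb{R}_\infty$) for the stated $N$. I would invoke the tropical linear duality (Corollary~\ref{cor:tropical_linear_duality}), after passing to a minimal unsolvable subsystem, to extract a subset $S$ of rows of $M_N$ such that on the submatrix $M_S$ every column containing a finite entry has its minimum attained by exactly one row of $S$, and the map sending a column to its minimizing row is injective. Since each row of $M_N$ is indexed by a pair $(j,J)$ and carries the coefficients of $\vec{x}^J \tp f_j$, reading the rows of $S$ as polynomials yields the $g_j$'s of a combination $g = \bigoplus_{(j,J) \in S} \vec{x}^J \tp f_j$ satisfying precisely the two conditions of nonsingularity, with $\deg g \leq N$. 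Over $\bb{R}_\infty$, the non-homogeneous formulation preserves the constant column in the witness, which translates into the constant monomial of $g$ being finite.

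The main obstacle will be producing a certificate of exactly this combinatorial shape. A Farkas/mean-payoff type tropical duality readily yields a set of rows on which every column has a unique minimizer, but the injectivity of the column-to-row assignment is more delicate: it relies on minimality of $S$ (no row can be discarded without restoring solvability), which by a Hall/SDR-style matching argument assigns to each row of $S$ a column witnessing its indispensability, namely a column on which that row is the strict minimizer. Making this matching cover all columns appearing in $g$ and respecting the non-homogeneous normalization in the $\bb{R}_\infty$ case is the principal technical content of the reduction.
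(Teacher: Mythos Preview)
Your overall route---Tropical Dual Nullstellensatz (Theorem~\ref{thm:main}) followed by the tropical linear duality (Corollary~\ref{cor:tropical_linear_duality}) and a reinterpretation in polynomial language---is exactly the paper's approach, and your direct argument for the easy direction is a clean addition.

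The hard direction, however, misreads what Corollary~\ref{cor:tropical_linear_duality} provides. The dual certificate is not a subset $S$ of rows of $M_N$: it is a vector $\vec{z}$ indexed by the rows $(j,J)$ of $M_N$, with real (possibly infinite) entries. The finite entries of $\vec{z}$ select which shifted polynomials $\vec{x}^J\tp f_j$ enter the combination \emph{and} supply the free constants of the monomial multipliers, so that $g=\bigoplus_{(j,J)} z_{(j,J)}\tp \vec{x}^J\tp f_j$. Without these constants the uniqueness condition will generally fail: already for the $2\times 2$ matrix with rows $(0,1)$ and $(0,0)$ the system $A\tp\vec{x}$ is unsolvable, yet the raw columnwise minimum is not uniquely attained; only after shifting the rows (e.g.\ $\vec{z}=(0,1/2)$) do you get a nonsingular witness. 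Your formula $g=\bigoplus_{(j,J)\in S}\vec{x}^J\tp f_j$ omits precisely this ingredient.

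Second, you anticipate that injectivity of the column-to-row assignment is ``more delicate'' and propose a minimal-subsystem / Hall matching argument. This is unnecessary: the statement of Corollary~\ref{cor:tropical_linear_duality} already asserts that in $M_N^T\tp\vec{z}$ every row has its (finite) minimum attained in exactly one column \emph{and} that different rows realise their minimum in different columns. Translating columns of $M_N^T$ back to rows $(j,J)$ of $M_N$ and rows of $M_N^T$ back to monomials $I$, these two clauses are verbatim the two bullets of the nonsingularity definition. So once you read the corollary correctly, the hard direction is a one-line unpacking; the minimal-subsystem and matching arguments can be dropped.
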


For the proofs of the last two theorems we use the following min-plus and tropical linear duality.

\subsection{Linear Duality}

We prove the following result on the min-plus linear duality.

\begin{lemma} \label{lem:min-plus_linear_duality}
Let $A,B \in \bb{R}_{\infty}^{m\times n}$ be two matrices.

For any subset $S \subseteq \{1,\ldots, n\}$ exactly one of the following two statements is true.
\begin{enumerate}
\item There is a solution to $A \tp \vec{x} \leq B \tp \vec{x}$ with finite coordinates $x_i$ for $i \in S$.
\item There is a solution to $B^T \tp \vec{y} < A^T \tp \vec{y}$ such that for some $i \in S$ the $i$-th coordinates of the vector $B^T \tp \vec{y}$ is finite.
\end{enumerate}

For any subset $S \subseteq \{1,\ldots, n\}$ exactly one of the following two statements is true.
\begin{enumerate}
\item There is a solution to $A \tp \vec{x} \leq B \tp \vec{x}$ such that for some $i \in S$ the coordinate $x_i$ is finite.
\item There is a solution to $B^T \tp \vec{y} < A^T \tp \vec{y}$ such that the $i$-th coordinates of the vector $B^T \tp \vec{y}$ are finite for all $i \in S$.
\end{enumerate}
\end{lemma}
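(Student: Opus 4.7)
The plan is to prove each dichotomy by combining weak duality — the two alternatives cannot hold simultaneously — with strong duality — at least one of them holds. Weak duality is a direct tropical ``inner product'' computation; strong duality reduces to the feasibility characterization of min-plus inequality systems from~\cite{akian12mean_payoff}, which the authors explicitly flag as the essential non-observational input.

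For weak duality, suppose witnesses $\vec{x}$ of (1) and $\vec{y}$ of (2) both exist, and consider the two tropical scalars
$$
\alpha_A = \min_{i,j}(y_i + a_{ij} + x_j), \qquad \alpha_B = \min_{i,j}(y_i + b_{ij} + x_j).
$$
Evaluating row-wise and using $A \tp \vec{x} \le B \tp \vec{x}$ gives $\alpha_A \le \alpha_B$. For the column-wise evaluation, the finiteness hypotheses in either dichotomy always furnish an index $i^* \in S$ at which both $x_{i^*}$ and $(B^T \tp \vec{y})_{i^*}$ are finite, hence $\alpha_B < \infty$. A case analysis using the convention that $\infty < \infty$ is allowed then shows $x_j + (A^T \tp \vec{y})_j > \alpha_B$ for every $j$: if the quantity is infinite it exceeds $\alpha_B$ trivially, and if it is finite the convention forces $(B^T \tp \vec{y})_j < (A^T \tp \vec{y})_j$ strictly, whence $x_j + (A^T \tp \vec{y})_j > x_j + (B^T \tp \vec{y})_j \ge \alpha_B$. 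Minimising over the finitely many $j$ yields $\alpha_A > \alpha_B$, contradicting $\alpha_A \le \alpha_B$.

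For strong duality in the first dichotomy, the infeasibility of $A \tp \vec{x} \le B \tp \vec{x}$ subject to $x_i < \infty$ for all $i \in S$ is precisely the kind of infeasibility statement handled by the tropical Farkas-type machinery of~\cite{akian12mean_payoff} through its equivalence with mean payoff games. That theory produces the dual certificate $\vec{y}$ with $B^T \tp \vec{y} < A^T \tp \vec{y}$ together with some $i^* \in S$ such that $(B^T \tp \vec{y})_{i^*}$ is finite. If only the subset-free version is readily available in the literature, one reduces to it by appending auxiliary columns forcing $x_i < \infty$ for $i \in S$ and transporting the resulting certificate back through this construction.

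The second dichotomy follows from the first by a combinatorial reduction: its primal alternative is the disjunction, over $i \in S$, of the primal alternative of the first dichotomy applied to the singleton $\{i\}$. Negating it and applying the first dichotomy to each singleton yields a family $\{\vec{y}^{(i)}\}_{i \in S}$ of dual solutions with $(B^T \tp \vec{y}^{(i)})_i$ finite; the tropical sum $\vec{y}^* = \bigoplus_{i \in S} \vec{y}^{(i)}$ then has $(B^T \tp \vec{y}^*)_i$ finite for every $i \in S$. Preservation of the strict inequality $B^T \tp \vec{y} < A^T \tp \vec{y}$ under pointwise minima follows by inspecting the minimising index of $(A^T \tp \vec{y}^*)_j$ and invoking the $\infty$-convention. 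The main obstacle throughout is the uniformly careful handling of the ``$\infty < \infty$'' convention — without it both the weak duality computation and the combination step fail at exactly the degenerate cases where coordinates collapse to $\infty$ — but once this bookkeeping is done the arguments are direct.
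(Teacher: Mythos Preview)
Your weak-duality computation via the tropical bilinear form $\min_{i,j}(y_i+a_{ij}+x_j)$ is correct, and so is your derivation of the second dichotomy from the first by tropically summing singleton dual witnesses. Both of these are genuinely different from the paper. The paper never separates weak from strong duality: it interprets the system $A\tp\vec{x}\le B\tp\vec{x}$ as a mean payoff game, shows that a solution with $x_i$ finite corresponds to a non-losing strategy for the column-player from vertex $c_i$, shows via an $\varepsilon$-perturbation that a strict solution to $A\tp\vec{x}<B\tp\vec{x}$ corresponds to a winning strategy, observes that the game for $(B^T,A^T)$ is the same game with players swapped, and then gets both ``at most one'' and ``at least one'' simultaneously from positional determinacy at each vertex. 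The two dichotomies fall out uniformly, corresponding to quantifying the starting vertex over $S$ universally versus existentially; no combination step is needed. Your route buys a more elementary ``at most one'' direction and an explicit reduction between the two dichotomies; the paper's route is more unified and actually carries out the game-theoretic step you defer.

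That deferral is the genuine gap. You assert that~\cite{akian12mean_payoff} supplies a Farkas-type certificate with the required finiteness side-condition on $(B^T\tp\vec{y})_{i^*}$, but that reference establishes the mean-payoff-game correspondence, not a packaged Farkas lemma of this shape; the paper's authors themselves remark that they are not aware of the statement in the literature and proceed to derive it. Your fallback reduction---``appending auxiliary columns forcing $x_i<\infty$ for $i\in S$''---does not work as written: extra columns introduce new variables, not constraints, and no obvious row/column gadget forces finiteness of a prescribed $x_i$ while letting you read off a dual certificate with $(B^T\tp\vec{y})_{i^*}$ finite for some $i^*\in S$. To close the argument you would need to carry out essentially what the paper does: the non-losing/winning strategy equivalences and the $\varepsilon$-perturbation linking strict and non-strict systems.
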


The proof of this lemma is based on the connection of min-plus linear systems with mean payoff games established in~\cite{akian12mean_payoff}.
Though the proof is rather simple as soon as one has this connection, we are not aware of the claim and the proof of this result in the literature.

As a simple corollary of this lemma we show the following clean formulation of the min-plus linear duality.

\begin{corollary} \label{cor:min-plus_linear_duality}
For two matrices $A,B \in \bb{R}^{m\times n}$ exactly one of the following two statements is true.
\begin{enumerate}
\item There is a solution to $A \tp \vec{x} \leq B \tp \vec{x}$ over $\bb{R}$.
\item There is a solution to $B^T \tp \vec{y} < A^T \tp \vec{y}$ over $\bb{R}$.
\end{enumerate}
For two matrices $A,B \in \bb{R}_{\infty}^{m\times n}$ exactly one of the following two statements is true.
\begin{enumerate}
\item There is a solution $\vec{x} \neq (\infty, \ldots, \infty)$ to $A \tp \vec{x} \leq B \tp \vec{x}$.
\item There is a finite solution to $B^T \tp \vec{y} < A^T \tp \vec{y}$.
\end{enumerate}
For two matrices $A,B \in \bb{R}_{\infty}^{m\times n}$ exactly one of the following two statements is true.
\begin{enumerate}
\item There is a finite solution to $A \tp \vec{x} \leq B \tp \vec{x}$.
\item There is a solution $\vec{y} \neq (\infty, \ldots, \infty)$ to $B^T \tp \vec{y} < A^T \tp \vec{y}$.
\end{enumerate}
\end{corollary}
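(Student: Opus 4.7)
The plan is to derive each of the three dichotomies from Lemma~\ref{lem:min-plus_linear_duality} by specialising to $S=\{1,\ldots,n\}$ and then rephrasing the conclusion. Concretely, for Parts~1 and~3 I would apply the first statement of the lemma, and for Part~2 the second one. In every case alternative~(1) of the lemma matches option~(a) of the corresponding part of the corollary essentially by definition; the only real work is to rewrite alternative~(2) --- a condition on the finiteness of coordinates of $B^{T}\tp\vec{y}$ --- in terms of the coordinates of $\vec{y}$ itself, as in option~(b).

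For Part~1 every entry of $A,B$ is finite. Alternative~(2) of the first statement of the lemma produces a $\vec{y}\in\bb{R}_\infty^{m}$ solving $B^{T}\tp\vec{y}<A^{T}\tp\vec{y}$ with at least one coordinate of $B^{T}\tp\vec{y}$ finite, which forces some $y_j$ to be finite; because all entries of $A,B$ are finite, this in turn forces every coordinate of both $B^{T}\tp\vec{y}$ and $A^{T}\tp\vec{y}$ to be finite. Replacing each remaining $y_\ell=\infty$ by a sufficiently large number $M$ leaves those (finite) minima unchanged and so preserves the strict inequality, yielding a fully finite solution, i.e.\ option~(b).

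For Part~2 I would apply the second statement of the lemma with $S=\{1,\ldots,n\}$. Its alternative~(1) is ``some $x_i$ finite,'' which is exactly $\vec{x}\neq(\infty,\ldots,\infty)$ and hence matches option~(a). Alternative~(2) supplies a $\vec{y}$ with every $(B^{T}\tp\vec{y})_i$ finite, and the same large-$M$ replacement converts it into a fully finite solution: for $M$ large the finite coordinates of $B^{T}\tp\vec{y}$ are unchanged, while each coordinate of $A^{T}\tp\vec{y}$ either keeps its old finite value or grows from $\infty$ towards $\infty$ with $M$, so the strict inequality (using the convention $\infty<\infty$) is preserved. The reverse implication ``finite $\vec{y}\Rightarrow$ all $(B^{T}\tp\vec{y})_i$ finite'' uses the standing assumption that $B$ has no all-$\infty$ column.

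For Part~3 I apply the first statement of the lemma with $S=\{1,\ldots,n\}$; alternative~(1) is option~(a), and alternative~(2) furnishes a $\vec{y}$ with some $(B^{T}\tp\vec{y})_i=\min_{j}(b_{ji}+y_{j})$ finite, which forces some $y_j\neq\infty$ and thus $\vec{y}\neq(\infty,\ldots,\infty)$. The converse uses the standing assumption that no row of $B$ is all $\infty$, so any $\vec{y}\neq(\infty,\ldots,\infty)$ produces a finite entry $b_{ji}+y_j$ in $B^{T}\tp\vec{y}$. The only substantive ingredient of the whole derivation is the ``make finite'' perturbation on $\vec{y}$ used in Parts~1 and~2; once one checks that raising an $\infty$-coordinate to a large finite $M$ leaves the finite minima involved untouched while only raising any $\infty$-minima, the corollary follows immediately from the lemma.
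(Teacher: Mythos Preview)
Your proposal is correct and follows essentially the same route as the paper: in each part you specialise Lemma~\ref{lem:min-plus_linear_duality} with $S=\{1,\ldots,n\}$ (first statement for Parts~1 and~3, second statement for Part~2) and then translate the finiteness condition on $B^{T}\tp\vec{y}$ into one on $\vec{y}$ via the ``replace $\infty$ by a large constant'' trick. Your argument is in fact slightly more explicit than the paper's in Part~2 (you track what happens to $A^{T}\tp\vec{y}$ under the replacement) and in Part~3 (you note the role of the no-all-$\infty$ row/column assumption for the converse), but the overall strategy is identical.
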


Since the corollary follows from Lemma~\ref{lem:min-plus_linear_duality} almost immediately,
we present the proof here.

\begin{proof}
For matrices $A,B \in \bb{R}^{m\times n}$ with finite entries we can use the first part of Lemma~\ref{lem:min-plus_linear_duality} with $S = \{1,\ldots, n\}$. Then the first statement in the lemma coincides with the first statement in the corollary. The second statement in the lemma is equivalent to the second statement in the corollary. Indeed, if there is a finite solution to the system $B^T \tp \vec{y} < A^T \tp \vec{y}$ then clearly for this $\vec{y}$ all the coordinates of the vector $B^T \tp \vec{y}$ are finite. In the reverse direction, if there is a solution $\vec{y}$ such that $B^T \tp \vec{y}$ has a finite coordinate, then the vector $\vec{y}$ itself has a finite coordinate. Since all the entries in $A$ and $B$ are finite, then all the coordinates of $A^T \tp \vec{y}$ and $B^T \tp \vec{y}$ are finite. If in the vector $\vec{y}$ there are infinite coordinates we can replace them by large enough finite numbers in such a way that the vectors $A^T \tp \vec{y}$ and $B^T \tp \vec{y}$ do not change. The resulting vector is a finite solution of the system $B^T \tp \vec{y} < A^T \tp \vec{y}$.

For the second part of the corollary let $S = \{1,\ldots, n\}$ and apply the second part of Lemma~\ref{lem:min-plus_linear_duality}.
Then the first statement in Lemma~\ref{lem:min-plus_linear_duality} is equivalent to the first statement in the corollary.
To see that the equivalence holds also for the second statements note that if for some $\vec{y}$ all the coordinates of $B^T \tp \vec{y}$
are finite, then we can assume that all the coordinates of $\vec{y}$ are also finite.
Indeed, if there are infinite coordinates in $\vec{y}$ we can just set them to constants large enough not to change the value of the vector $B^T \tp \vec{y}$.

The last part of the corollary can be shown analogously by letting $S = \{1,\ldots, n\}$ and applying the first part of Lemma~\ref{lem:min-plus_linear_duality}.

\end{proof}

We show a similar result for the tropical duality.
\begin{lemma} \label{lem:tropical_linear_duality}
Let $A \in \bb{R}_{\infty}^{m\times n}$ be a matrix.

For any subset $S \subseteq \{1,\ldots, n\}$ exactly one of the following two statements is true.
\begin{enumerate}
\item There is a solution to $A \tp \vec{x}$ with finite coordinates $x_i$ for $i \in S$.
\item There is $\vec{z}$ such that in each row of $A^T \tp \vec{z}$ the minimum is attained only once or is equal to $\infty$, for each two rows with the finite minimum the (unique) minimums are in different columns and
for some $i \in S$ the $i$-th coordinate of $A^T \tp \vec{z}$ is finite.
\end{enumerate}

For any subset $S \subseteq \{1,\ldots, n\}$ exactly one of the following two statements is true.
\begin{enumerate}
\item There is a solution to $A \tp \vec{x}$ such that for some $i \in S$ the coordinate $x_i$ is finite.
\item There is $\vec{z}$ such that in each row of $A^T \tp \vec{z}$ the minimum is attained only once or is equal to $\infty$, for each two rows with the finite minimum the minimums are in different columns and the $i$-th coordinates of $A^T \tp \vec{z}$ are finite for all $i \in S$.
\end{enumerate}
\end{lemma}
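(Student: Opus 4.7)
My plan is to deduce Lemma~\ref{lem:tropical_linear_duality} from the min-plus duality Lemma~\ref{lem:min-plus_linear_duality} by reducing the tropical root condition for $A \tp \vec{x}$ to a min-plus inequality system, and then contracting the resulting min-plus dual witness into a tropical dual witness. The reduction rests on the elementary observation that $\vec{x}$ is a tropical root of $A \tp \vec{x}$ if and only if for every pair $(i,k)$,
$$
\min_{j \ne k}(a_{ij} + x_j) \;\le\; a_{ik} + x_k,
$$
which forces the minimum in each row to be attained at least twice or to be $\infty$. Encoding these $mn$ inequalities as $\tilde B \tp \vec x \le \tilde C \tp \vec x$---with $\tilde B_{(i,k),j} = a_{ij}$ for $j \ne k$ and $\infty$ otherwise, and $\tilde C_{(i,k),j} = a_{ij}$ for $j = k$ and $\infty$ otherwise---the tropical primal, with its $S$-finiteness constraint on $\vec x$, becomes a min-plus primal with the same constraint. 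Applying the relevant half of Lemma~\ref{lem:min-plus_linear_duality} produces, when the primal fails, a vector $\vec y \in \bb{R}_\infty^{mn}$ with $\tilde C^T \tp \vec y < \tilde B^T \tp \vec y$ and the corresponding finiteness on coordinates of $\tilde C^T \tp \vec y$.

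To contract $\vec y$, I set $z_i := \min_k y_{i,k}$ and compute
$$
(A^T \tp \vec z)_j \;=\; \min_{i,k}(a_{ij} + y_{i,k}) \;=\; \min\bigl((\tilde C^T \tp \vec y)_j,\ (\tilde B^T \tp \vec y)_j\bigr) \;=\; (\tilde C^T \tp \vec y)_j,
$$
where the last equality uses the strict dual inequality (with the convention $\infty < \infty$). Hence $\vec z$ inherits the required finiteness on $A^T \tp \vec z$, and every minimizing pair $(i,k)$ of the joint minimum is forced to satisfy $k = j$ and $y_{i,j} = z_i$. To upgrade $\vec z$ to a valid tropical dual witness---each column of $A^T \tp \vec z$ has a uniquely attained or infinite minimum, and the minimizing rows are distinct across columns---I exploit the openness of the strict inequality: a small generic perturbation of $\vec y$ preserves the dual inequality and the finiteness while making each finite row minimum $z_i = \min_k y_{i,k}$ uniquely attained at some index $k(i)$. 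Under such a choice the minimizers of $(A^T \tp \vec z)_j$ are confined to $\{i : k(i) = j\}$, which gives row-distinctness of minimizers across columns essentially for free; a further generic perturbation of the individual $z_i$ breaks any remaining ties within a single column.

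For the impossibility direction, suppose both statements hold. Pick $j^* \in S$ with $(A^T \tp \vec z)_{j^*}$ finite and let $i_0 = i(j^*)$ be the unique column-$j^*$ minimizer. Row $i_0$ of $A \tp \vec x$ has finite minimum (at most $a_{i_0,j^*} + x_{j^*}$) attained at two or more distinct columns, so some $j_1 \ne j^*$ satisfies $a_{i_0, j_1} + x_{j_1} \le a_{i_0, j^*} + x_{j^*}$ with $a_{i_0,j_1}$ and $x_{j_1}$ both finite. Finiteness rules out $(A^T \tp \vec z)_{j_1} = \infty$, and injectivity of $i(\cdot)$ gives $i(j_1) \ne i_0$, so $i_0$ is not the unique minimizer in column $j_1$. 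Adding $z_{i_0}$ throughout yields $(A^T \tp \vec z)_{j_1} + x_{j_1} < (A^T \tp \vec z)_{j^*} + x_{j^*}$; iterating produces an infinite strictly decreasing sequence over the finite set $\{1,\ldots,n\}$, a contradiction.

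The main obstacle is the perturbation step: several small modifications of $\vec y$ (or equivalently of $\vec z$) must be coordinated so as to preserve both the strict dual inequality and the prescribed finiteness, and one has to argue that finitely many such modifications suffice to achieve uniquely attained, row-distinct column minimizers. Part 2 of the lemma follows by the same chain of reductions with the second half of Lemma~\ref{lem:min-plus_linear_duality}.
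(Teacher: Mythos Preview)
Your proposal is correct and follows the same overall strategy as the paper---reduce the tropical primal to a min-plus inequality system and invoke Lemma~\ref{lem:min-plus_linear_duality}---but the implementation differs in two places worth noting. First, your encoding sets the $k$-th column to $\infty$ on the left (``delete column $k$''), whereas the paper encodes via an $\eps$-perturbation, building the block system with rows $A+\eps C_i$ on one side and $A$ on the other; both encodings capture ``the row minimum is not uniquely at $k$'', but the paper's version keeps the block structure of the dual more transparent. Second, and more interestingly, the paper closes the argument by exhibiting a two-way correspondence between min-plus dual witnesses $\vec y$ and tropical dual witnesses $\vec z$ (it constructs $\vec y$ from $\vec z$ explicitly, so the whole proof is a chain of equivalences), while you only contract one way and then supply a separate, direct ``not both'' argument via infinite descent on the quantity $(A^T\odot\vec z)_j + x_j$. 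Your descent argument is self-contained and avoids the converse construction; the paper's explicit inverse avoids the perturbation step you flag as the main obstacle. Incidentally, your first perturbation (making each $z_i=\min_k y_{i,k}$ uniquely attained) is not actually needed: if $i$ minimizes $(A^T\odot\vec z)_j$ and some $k\ne j$ achieves $y_{i,k}=z_i$, then $(\tilde B^T\odot\vec y)_j\le a_{ij}+y_{i,k}=a_{ij}+z_i=(\tilde C^T\odot\vec y)_j$, contradicting the strict dual inequality; so distinctness of minimizers across columns already holds for the unperturbed $\vec z$, and only the within-column tie-breaking perturbation remains.
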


This result can be proven either through a reduction to min-plus linear systems, or through the analysis of~\cite{Grigoriev13complexity}.
We give a proof through the reduction to min-plus linear systems in Section~\ref{sec:linear_duality}.

Just like in the case of min-plus linear systems we can show the following corollary.

\begin{corollary} \label{cor:tropical_linear_duality}
For a matrix $A \in \bb{R}^{m\times n}$ exactly one of the following two statements is true.
\begin{enumerate}
\item There is a solution to $A \tp \vec{x}$ over $\bb{R}$.
\item There is $\vec{z}\in \bb{R}^{m}$ such that in each row of $A^T \tp \vec{z}$ the minimum is attained only once
and for each two rows the minimums are in different columns.
\end{enumerate}
For a matrix $A \in \bb{R}_{\infty}^{m\times n}$ exactly one of the following two statements is true.
\begin{enumerate}
\item There is a solution $\vec{x} \neq (\infty, \ldots, \infty)$ to $A \tp \vec{x}$.
\item There is a finite $\vec{z}$ such that in each row of $A^T \tp \vec{z}$ the minimum is attained only once
and for each two rows the minimums are in different columns.
\end{enumerate}
For a matrix $A \in \bb{R}_{\infty}^{m\times n}$ exactly one of the following two statements is true.
\begin{enumerate}
\item There is a finite solution to $A \tp \vec{x}$.
\item There is $\vec{z}\neq (\infty, \ldots, \infty)$ such that in each row of $A^T \tp \vec{z}$ the minimum is attained only once
or is equal to $\infty$ and for each two rows with the finite minimum the minimums are in different columns.
\end{enumerate}
\end{corollary}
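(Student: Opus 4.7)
The plan is to derive each of the three parts of the corollary by invoking Lemma~\ref{lem:tropical_linear_duality} at $S=\{1,\dots,n\}$, in exactly the style in which Corollary~\ref{cor:min-plus_linear_duality} was derived from Lemma~\ref{lem:min-plus_linear_duality}. With $S$ chosen this way, the two alternatives produced by the lemma almost literally match the two alternatives of the corollary; the only gap to bridge in each case is between ``\emph{some} coordinate of $A^T\tp\vec{z}$ is finite'' and ``\emph{all} coordinates of $A^T\tp\vec{z}$ are finite'', and between a $\vec{z}$ that merely has at least one finite entry and a $\vec{z}$ that is entirely finite.

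For the first part, with $A\in\bb{R}^{m\times n}$, I would apply the first item of the lemma. Alternative (1) is identical to alternative (1) of the corollary. In alternative (2), finiteness of even one coordinate of $A^T\tp\vec{z}$ forces some $z_k$ to be finite, and because every entry of $A$ is finite this already makes every coordinate of $A^T\tp\vec{z}$ finite; hence the ``or $\infty$'' clause in the lemma never triggers, each row minimum is uniquely attained, and the ``different columns'' conclusion holds for all row pairs. Any remaining infinite $z_k$ may then be replaced by a number large enough to force $a_{ki}+z_k$ strictly above the current minimum in every row $i$, producing $\vec{z}\in\bb{R}^m$ of the required form. For the second part, with $A\in\bb{R}_\infty^{m\times n}$ and the restriction $\vec{x}\neq(\infty,\dots,\infty)$, I would apply the second item of the lemma with the same $S$: alternative (1) then says some coordinate of $\vec{x}$ is finite, which is exactly $\vec{x}\neq(\infty,\dots,\infty)$; alternative (2) gives a $\vec{z}$ whose image $A^T\tp\vec{z}$ is finite in every coordinate, so again the ``$\infty$ minimum'' option is vacuous, and the same inflation trick upgrades $\vec{z}$ to a finite vector. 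For the third part, with $A\in\bb{R}_\infty^{m\times n}$ but only requiring a finite $\vec{x}$, I would apply the first item of the lemma with $S=\{1,\dots,n\}$: under the standing assumption that no row of $A$ is identically $\infty$, finiteness of some coordinate of $A^T\tp\vec{z}$ is equivalent to $\vec{z}\neq(\infty,\dots,\infty)$, and the remaining clauses of alternative (2) are copied verbatim from the lemma.

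I do not anticipate any serious obstacle, since Lemma~\ref{lem:tropical_linear_duality} carries all of the substantive content. The only thing to track carefully is the bookkeeping sketched above: whenever the matrix is finite, or whenever the lemma already guarantees that all coordinates of $A^T\tp\vec{z}$ are finite, the ``$\infty$ minimum'' branch in the lemma collapses automatically; and an infinite component of $\vec{z}$ can always be inflated to a sufficiently large finite value without disturbing any uniquely-attained finite row minimum of $A^T\tp\vec{z}$.
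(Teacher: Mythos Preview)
Your proposal is correct and follows precisely the route the paper intends: the paper merely says the proof is ``completely analogous to Corollary~\ref{cor:min-plus_linear_duality}'', and what you have written is exactly that analogy, applying Lemma~\ref{lem:tropical_linear_duality} with $S=\{1,\dots,n\}$ and using the same inflation-of-infinite-coordinates trick to pass from ``some/all coordinates of $A^T\tp\vec{z}$ finite'' to a finite $\vec{z}$. Your observation in the third part that, under the standing no-all-$\infty$-row assumption, ``some coordinate of $A^T\tp\vec{z}$ is finite'' is equivalent to $\vec{z}\neq(\infty,\dots,\infty)$ is the only small detail not spelled out in the min-plus analogue, and it is correct.
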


The proof of this corollary is completely analogous to Corollary~\ref{cor:min-plus_linear_duality}.

\subsection{Tropical vs. Min-plus}

We also establish the connection between tropical and min-plus polynomial systems.

\begin{lemma}
Over both $\bb{R}$ and $\bb{R}_\infty$ given a system of tropical polynomials we can construct a system of min-plus polynomials
over the same set of variables and with the same set of roots.
\end{lemma}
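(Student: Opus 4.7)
The plan is to handle each tropical polynomial in the given system separately: for every $f_l$ I will produce a single min-plus polynomial $(F_l, G_l)$ in the same variables whose set of roots coincides with the set of tropical roots of $f_l$, and then collect these into the resulting min-plus system, whose common roots will automatically coincide with the common tropical roots of the original system. So it suffices to focus on one tropical polynomial $f = \bigoplus_{i=1}^{s} m_i$ with $s \geq 2$ monomials.

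The construction I will use is
$$
F := f \tp f = \bigoplus_{i,j} m_i \tp m_j, \qquad G := \bigoplus_{i \neq j} m_i \tp m_j,
$$
where both are viewed as tropical polynomials after collapsing coincident exponent vectors. Correctness rests on two simple observations: first, $F(\vec{a}) = 2 f(\vec{a})$ by tropical distributivity; second, every summand of $G$ satisfies $m_i(\vec{a}) + m_j(\vec{a}) \geq 2 f(\vec{a})$, so $G(\vec{a}) \geq F(\vec{a})$ always. Hence the equation $F(\vec{a}) = G(\vec{a})$ is equivalent to having some pair $i \neq j$ with $m_i(\vec{a}) + m_j(\vec{a}) \leq 2 f(\vec{a})$, which (since each summand is separately at least $f(\vec{a})$) forces $m_i(\vec{a}) = m_j(\vec{a}) = f(\vec{a})$; this is precisely the tropical root condition for $f$.

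Over $\bb{R}_\infty$ the same analysis goes through, and the only additional case is $f(\vec{a}) = \infty$, which means every $m_i(\vec{a}) = \infty$; then $F(\vec{a}) = G(\vec{a}) = \infty$ and the equation holds, while conversely, if the common value of both sides is $\infty$, no $m_{i_0}(\vec{a})$ can be finite (else $F(\vec{a}) \leq 2 m_{i_0}(\vec{a}) < \infty$), forcing $\vec{a}$ to be a tropical root. The only real bookkeeping is the edge case $s = 1$: over $\bb{R}$ such an $f$ has no tropical roots, and I will use any unsatisfiable min-plus equation such as $(0, 1)$; over $\bb{R}_\infty$ I will formally adjoin an $\infty$-coefficient monomial to $f$ (which does not change $f$ as a function) and apply the same construction, whereupon the resulting equation captures exactly the roots $\{m_1(\vec{a}) = \infty\}$. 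The main subtlety, and the only step that requires careful case analysis, is the behaviour at points with infinite coordinates in $\bb{R}_\infty$; the rest of the proof is a short direct comparison.
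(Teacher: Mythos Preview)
Your proof is correct, but it follows a different route from the paper's. The paper replaces each tropical polynomial $f=\min(l_1,\dots,l_m)$ by the system of $m$ min-plus equations
\[
\min(l_1,\dots,l_m)\;=\;\min(l_1,\dots,l_{i-1},l_{i+1},\dots,l_m),\qquad i=1,\dots,m,
\]
observing that ``the minimum is attained at least twice'' is equivalent to all of these equalities holding. You instead replace $f$ by the single min-plus equation $(f\tp f,\ \bigoplus_{i\neq j} m_i\tp m_j)$. Both constructions stay over the same set of variables and give exactly the root set of $f$; the trade-off is that the paper uses $m$ equations but keeps the degree equal to $\deg f$ (the paper in fact records this extra degree-preservation property), whereas your construction uses one equation at the cost of doubling the degree. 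For the lemma as stated (no degree requirement), either approach suffices, and your argument is a nice alternative. The paper's construction also avoids any special handling of the single-monomial case, since the right-hand side then becomes the empty minimum $\infty$, which automatically encodes the correct root set over both $\bb{R}$ and $\bb{R}_\infty$.
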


In the opposite direction we do not have such a simple connection, but we can still prove the following lemma.
\begin{lemma}
Over both $\bb{R}$ and $\bb{R}_{\infty}$ for any system of min-plus polynomials $F$ in $n$ variables
there is a system of tropical polynomials $T$ in $2n$ variables
and an injective linear transformation $H \colon \bb{R}_{\infty}^n \to \bb{R}_{\infty}^{2n}$ such that
the image of the set of  roots of $F$ coincides with the set of roots of $T$.
\end{lemma}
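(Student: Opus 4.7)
The plan is to take $H \colon \bb{R}_\infty^n \to \bb{R}_\infty^{2n}$ to be the diagonal embedding $H(\vec{a}) = (\vec{a}, \vec{a})$, which is an injective linear map (both in the classical sense and in the tropical sense, the latter realized by the block matrix $\binom{E}{E}$ with $E$ the tropical identity). Roots of $T$ will be forced to lie on the image of $H$, and on this image they will correspond exactly to the min-plus roots of $F$.

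For each $j \in \{1, \ldots, n\}$ I include in $T$ the tropical polynomial $x_j \oplus y_j$; its tropical root condition is $x_j = y_j$ (including the case $x_j = y_j = \infty$), so these $n$ polynomials jointly force every root of $T$ onto the diagonal $H(\bb{R}_\infty^n)$. For each min-plus polynomial $(f_i, g_i) \in F$, written $f_i = \bigoplus_p m_p$ and $g_i = \bigoplus_q n_q$, the starting point is the tropical polynomial $t_i = f_i(\vec{x}) \oplus g_i(\vec{y})$. The monomials of $f_i(\vec{x})$ have exponent supports in $\bb{Z}_{\ge 0}^n \times \{\vec{0}\}$ and those of $g_i(\vec{y})$ in $\{\vec{0}\} \times \bb{Z}_{\ge 0}^n$, so the two ``blocks'' of monomials of $t_i$ are indexed by disjoint exponent sets. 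A cross-tie of the minimum of $t_i$ at a diagonal point $(\vec{a}, \vec{a})$ (one attaining monomial from each block) encodes exactly $m_p(\vec{a}) = n_q(\vec{a}) \le$ all other monomial values, i.e., the min-plus equality $f_i(\vec{a}) = g_i(\vec{a})$.

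The main obstacle is that the naive $t_i$ admits spurious roots on the diagonal: if $\vec{a}$ happens to be a tropical root of $f_i$ alone with $f_i(\vec{a}) \le g_i(\vec{a})$ (or symmetrically for $g_i$), the minimum of $t_i$ at $(\vec{a}, \vec{a})$ is attained twice inside a single block and no min-plus equality is forced. To eliminate such extras I would augment $T$ with further tropical polynomials that use the extra $\vec{y}$-coordinates as existential witnesses: for each way in which an internal tie inside the $f_i$-block (respectively the $g_i$-block) can arise, an auxiliary tropical polynomial is added whose tropical root condition forces the internal tie to be accompanied by a matching monomial from the opposite block of the same value. Conceptually this is dual to the identity $f \tp f = \bigoplus_{p \ne p'} m_p \tp m_{p'}$ used for the forward (tropical-to-min-plus) direction, now applied to pairs of monomials across $f_i$ and $g_i$; the delicate step is to choose all auxiliary polynomials coherently so that their joint root set, intersected with the diagonal, is precisely $\{(\vec{a}, \vec{a}) : f_i(\vec{a}) = g_i(\vec{a}) \text{ for every } i\}$.

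The $\bb{R}_\infty$ case requires checking that the construction respects the convention that a tropical polynomial with infinite value has a root: this is automatic for the diagonal-enforcing polynomials $x_j \oplus y_j$ and for $t_i$, and the auxiliary polynomials can be arranged to be trivially tropical roots at any point with an infinite coordinate, so the whole translation extends uniformly from $\bb{R}$ to $\bb{R}_\infty$.
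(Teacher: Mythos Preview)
Your diagonal embedding $H(\vec{a})=(\vec{a},\vec{a})$ and the polynomials $x_j\oplus x_j'$ forcing roots onto $\mathrm{Im}(H)$ match the paper exactly. The gap is in the second step. Your basic polynomial $t_i=f_i(\vec{x})\oplus g_i(\vec{y})$ does not encode the min-plus equality, as you yourself notice, and the fix you sketch---``for each way an internal tie can arise, add an auxiliary polynomial that forces a matching monomial from the opposite block''---is not a construction. You do not say what those auxiliary polynomials are, and it is unclear how any tropical polynomial could detect ``an internal tie occurred in the $f_i$-block'' and then force a cross-block equality; the allusion to $f\odot f=\bigoplus_{p\ne p'} m_p\odot m_{p'}$ does not resolve this. (There is also a minor slip: the exponent supports of $f_i(\vec{x})$ and $g_i(\vec{y})$ are not disjoint at the constant monomial.)

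The paper's trick avoids the whole ``spurious root'' problem by never using $t_i$ at all. Writing the min-plus polynomial as $(\min_j m_j,\ \min_p l_p)$, for each fixed $p$ it adds
\[
\min\bigl(m_1(\vec{x}),\,m_1(\vec{x}'),\,\ldots,\,m_k(\vec{x}),\,m_k(\vec{x}'),\,l_p(\vec{x})\bigr),
\]
i.e.\ each $m$-monomial is \emph{duplicated} in both copies of the variables, while a single $l_p$ is appended. On the diagonal $(\vec{a},\vec{a})$ the $m$-part automatically attains its minimum twice, so this polynomial has a tropical root iff $\min_j m_j(\vec{a})\le l_p(\vec{a})$; ranging over all $p$ gives $\min_j m_j\le \min_p l_p$. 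The symmetric family (duplicate the $l$'s, append one $m_j$) gives the reverse inequality, and together they pin down exactly $f_i(\vec{a})=g_i(\vec{a})$. This duplication idea is the missing ingredient in your proposal.
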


The proof of this lemma follows the lines of the proof of the analogous statement for the case of linear polynomials in~\cite{GP13Complexity}.

\section{Tropical and Min-plus Dual Nullstellensatz} \label{sec:tropical_dual}

Throughout the whole section
we assume that we are given a system of tropical polynomials $F = \{f_1, \ldots, f_k\}$ in $n$ variables $\vec{x}=(x_1, \ldots, x_n)$.

The proofs of Theorems~\ref{thm:main} and~\ref{thm.min-plus} are analogous. We present the proof of Theorem~\ref{thm:main} (which is more intuitive) and specify what should be changed to obtain the proof of Theorem~\ref{thm.min-plus}.

This section is organized as follows.
In Subsection~\ref{sec:null_prelim} we introduce required notation and show preliminary results.
In Subsections~\ref{sec:env_polytope} and~\ref{sec:singular_facet} we give a proof of Theorems~\ref{thm:main1} and~\ref{thm.min-plus1}.
In Subsection~\ref{sec:examples} we provide some examples illustrating the difficulties behind the proof.
In Subsection~\ref{sec:infinite case} we prove Theorems~\ref{thm:main2} and~\ref{thm.min-plus2}.
Finally, in Subsection~\ref{sec:lower_bounds} we show that the upper bounds in Theorems~\ref{thm:main} and~\ref{thm.min-plus} are tight.

\subsection{Preliminary definitions and results} \label{sec:null_prelim}

\paragraph{Geometrical interpretation of tropical polynomials.}
All functions $\phi \colon \bb{Z}^n \to \bb{R}$ we consider in this section are partial, that is they are defined on some subset $\Dom(\phi) \subseteq \bb{Z}^n$.

\begin{definition}For two functions $\phi, \psi \colon \bb{Z}^n \to \bb{R}$ let $D = \Dom \phi \cap \Dom \psi$ and consider $t \in \bb{R}$ (if there is one) such that
\begin{enumerate}
  \item for all
$\vec{x} \in D$ we have $\phi(\vec{x}) + t \leq \psi(\vec{x})$;
  \item there is $\vec{x} \in D$ such that $\phi(\vec{x}) + t = \psi(\vec{x})$.
\end{enumerate}
It is easy to see that $t$ is unique provided it exists.
We denote the set of points satisfying property~$2$ by $\Sing(\phi,\psi)$ and call them \emph{singularity points} for the pair $(\phi,\psi)$.
If such $t$ does not exist we let $\Sing(\phi,\psi) = \emptyset$. We say that
$\phi$ is \emph{singular} to $\psi$ iff $|\Sing(\phi,\psi)| \neq 1$.
\end{definition}

Geometrically, (if $\Dom \phi \cap \Dom \psi$ is a finite set) $\phi$ is singular to $\psi$ if either the domains of $\phi$ and $\psi$ do not intersect, or if we can adjust the graph of $\phi$ in $\bb{R}^{n+1}$ space along the $(n+1)$-th coordinate in such
a way that this graph lies below the graph of $\psi$ and has with it at least two common points. 
For a function $\phi$ we denote by $G(\phi)$ the graph of the function in $\bb{R}^{n+1}$.

Note that the notion of singularity is nonsymmetric. It might be that $\phi$ is singular to $\psi$, but $\psi$ is not singular to $\phi$.

The following lemma follows directly from the definition.
\begin{lemma} \label{lem:singularity}
We have $\vec{x} \in \Sing(\phi,\psi)$ iff $\vec{x}$ minimizes the function $\psi - \phi$ (on its domain).
Or, equivalently, iff for all $\vec{y}$ we have
$$
\phi(\vec{x}) - \phi(\vec{y}) \geq \psi(\vec{x}) - \psi(\vec{y}).
$$
\end{lemma}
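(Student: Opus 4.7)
The statement is essentially an immediate unwinding of the definition of $\Sing(\phi,\psi)$, so the proof plan is short. My plan is to verify both claimed equivalences by direct algebraic manipulation, while being slightly careful about the edge case in which the scalar $t$ from the definition fails to exist.

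First I would handle the main equivalence: $\vec{x}\in\Sing(\phi,\psi)$ iff $\vec{x}$ minimizes $\psi-\phi$ on $D:=\Dom\phi\cap\Dom\psi$. Rewriting condition~1 of the definition as $t\leq \psi(\vec{y})-\phi(\vec{y})$ for every $\vec{y}\in D$ shows that $t$ is exactly a lower bound for the function $\psi-\phi$ on $D$. Condition~2 applied at a point $\vec{x}\in D$ becomes $t=\psi(\vec{x})-\phi(\vec{x})$, so that $t$ is the attained minimum of $\psi-\phi$ and $\vec{x}$ is a minimizer. Conversely, if $\psi-\phi$ attains its minimum at some $\vec{x}\in D$ with value $t_0$, then $t_0$ trivially satisfies conditions~1 and~2, hence coincides with the (unique) $t$ of the definition and $\vec{x}\in\Sing(\phi,\psi)$. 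This also covers the boundary case: when $\psi-\phi$ has no minimum on $D$ (e.g.\ $D=\emptyset$, or the infimum is not attained), no $t$ can satisfy both conditions simultaneously, so $\Sing(\phi,\psi)=\emptyset$ by the convention in the definition, and no point minimizes $\psi-\phi$.

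Next I would note the second equivalence as a one-line rearrangement: the inequality
\[
\phi(\vec{x})-\phi(\vec{y})\geq \psi(\vec{x})-\psi(\vec{y})
\]
is literally $\psi(\vec{y})-\phi(\vec{y})\geq \psi(\vec{x})-\phi(\vec{x})$ (transposing $\phi(\vec{y})$ and $\psi(\vec{x})$), which is the statement that $\vec{x}$ minimizes $\psi-\phi$.

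There is no real obstacle here; the only point requiring mild care is making sure the argument still reads correctly when $t$ does not exist (so that both sides of the equivalence are vacuously or trivially false). I would mention this explicitly in one sentence to keep the proof self-contained.
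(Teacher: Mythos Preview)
Your proposal is correct and matches the paper's approach: the paper simply states that the lemma ``follows directly from the definition'' and gives no further proof, which is exactly the direct unwinding you carry out. Your extra sentence about the edge case where $t$ fails to exist is a nice touch but not something the paper bothers to make explicit.
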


In this paper we consider rows of the matrix $M_N$, solutions to $M_N \tp \vec{y}$, coefficients of $f_i$'s. All of them constitute vectors
$\vec{a}$ which coordinates are labeled by $I \in D$ for some $D \subseteq \bb{Z}^n_{+}$, that is by vectors with integer non-negative coordinates.
With a vector $\vec{a}$ we associate a function $\phi_{\vec{a}} \colon \bb{Z}^n \to \bb{R}$
letting $\phi_{\vec{a}}(I) = a_I$ for $I \in D$ and $a_I \neq \infty$ and leaving $\phi_{\vec{a}}(I)$ undefined otherwise.
When this vector is the vector of the coefficients of a polynomial $f$ we shortly denote the resulting function by $\phi_f$.
When a polynomial $f$ is one of the polynomials $f_i \in F$ we simplify the notation further to $\phi_i$.
Note that due to the definition of $M_N$ if $\vec{r}$ is a row of $M_N$ labeled by $(J,i)$ then
$\phi_{\vec{r}}(I) = \phi_i(I - J)$.

In what follows we reserve Greek letters for the functions representing the coefficients of polynomials and entries of Macaulay matrix
to distinguish them from the functions $f_i$'s.

The motivation for our notion of singularity is that it captures the
solvability of tropical polynomials.

\begin{lemma} \label{lem:singular_vs_solution}
A vector $\vec{a}=\{a_I\}_{|I|\leq N}$ over $\bb{R}$ or $\bb{R}_{\infty}$ is a solution to a tropical linear polynomial $\min_I \{y_I + r_I\}$ corresponding to the vector $\vec{r} = \{r_I\}_{|I|\leq N}$ iff the function $- \phi_{\vec{a}}$ is singular to $\phi_{\vec{r}}$.
\end{lemma}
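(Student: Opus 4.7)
The plan is to show that this lemma is a direct unpacking of the two definitions involved, with one mild edge case. The substantive observation is that both ``being a solution of $\min_I\{y_I + r_I\}$'' and ``singularity of $-\phi_{\vec{a}}$ to $\phi_{\vec{r}}$'' are statements about the multiset of values $\{a_I + r_I\}_{I}$, just packaged differently.

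First I would rewrite the singularity condition concretely. The common domain $D = \Dom(-\phi_{\vec{a}}) \cap \Dom(\phi_{\vec{r}})$ is precisely the set of multi-indices $I$ with $|I| \le N$ for which both $a_I$ and $r_I$ are finite. For $I \in D$ the inequality $-\phi_{\vec{a}}(I) + t \le \phi_{\vec{r}}(I)$ becomes $t \le a_I + r_I$, and the equality case required by condition~2 of the definition of $\Sing$ becomes $t = a_I + r_I$. Consequently, when $D \ne \emptyset$ the (unique) value $t$ from the definition exists and equals $\min_{I \in D}(a_I + r_I)$, and
\[
\Sing(-\phi_{\vec{a}}, \phi_{\vec{r}}) \;=\; \{\, I \in D \,:\, a_I + r_I = \min_{J \in D}(a_J + r_J)\,\}.
\]

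Next I would compare with the solution condition. Since $a_I + r_I$ is finite iff $I \in D$, the quantity $\min_I\{a_I + r_I\}$ equals $+\infty$ precisely when $D = \emptyset$, and otherwise equals the finite value $t$ identified above, with the set of minimizers coinciding with $\Sing(-\phi_{\vec{a}}, \phi_{\vec{r}})$. Recall that $\vec{a}$ is a solution iff this minimum is either infinite or attained on at least two distinct multi-indices.

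Finally I would split into two cases. If $D = \emptyset$, then the minimum is $\infty$, so $\vec{a}$ is a solution; meanwhile $\Sing = \emptyset$ by the convention in the definition, so $|\Sing| = 0 \ne 1$ and $-\phi_{\vec{a}}$ is singular to $\phi_{\vec{r}}$. If $D \ne \emptyset$, then $|\Sing| \ge 1$, so $|\Sing| \ne 1$ is equivalent to $|\Sing| \ge 2$, which is exactly the statement that the finite minimum $\min_I\{a_I + r_I\}$ is attained at least twice, i.e.\ that $\vec{a}$ is a solution. The only subtlety worth flagging is this empty-domain case: it is easy to overlook that the ``$\infty$'' branch of the solution condition is needed precisely to match the convention $\Sing = \emptyset$ when no admissible $t$ exists, and once this is observed no further difficulty arises.
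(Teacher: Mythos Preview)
Your proof is correct and follows essentially the same approach as the paper: both unpack the definitions, identify the value $t$ as (up to sign) $\min_I(a_I + r_I)$, and handle the empty-domain case separately to match the $\infty$ branch of the solution definition. Your write-up is in fact slightly more explicit than the paper's, which proves only the forward direction in detail and states that the reverse ``follows the same lines.''
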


\begin{proof}
Consider an arbitrary vector $\vec{r}$ over $\bb{R}$ or $\bb{R}_{\infty}$ and the corresponding tropical linear polynomial.
The vector $\vec{a}$ is a root
of this linear polynomial if the minimum in $\{\phi_{\vec{a}}(I) + \phi_{\vec{r}}(I)\}_{I}$ is attained at least twice or is equal to $\infty$. This minimum is $\infty$ iff $\Dom(\phi_{\vec{a}}) \cap \Dom(\phi_{\vec{r}}) = \emptyset$ and thus $\Sing(-\phi_{\vec{a}}, \phi_{\vec{r}}) = \emptyset$.
If the minimum is finite let $t$ be the minimal number such that $\phi_{\vec{a}}(I) + \phi_{\vec{r}}(I) + t \geq 0$ for all $I$. Then $\phi_{\vec{a}}(I) + \phi_{\vec{r}}(I) + t = 0$
equals zero for at least two different $I$'s.
This means that $- \phi_{\vec{a}}(I) - t \leq \phi_{\vec{r}}(I)$ and equality holds for at least two points.
Thus the function $- \phi_{\vec{a}}$ is singular to $\phi_{\vec{r}}$.

The proof in the opposite direction follows the same lines.
\end{proof}

In particular, a vector $\vec{y}$ is a solution to $M_N \tp \vec{y}$ iff $- \phi_{\vec{y}}$ is singular to all $\phi_{\vec{r}}$, where $\vec{r}$ is a row of $M_N$.

The difference between the semirings $\bb{R}$ and $\bb{R}_{\infty}$ is that over $\bb{R}$ we have $\Dom \phi_{\vec{y}} = \{I |\  |I| \leq N\}$.

Now let $\vec{r}$ be a vector of coefficients of a tropical polynomial $f$, that is $r_{I}$ is the coefficient of the monomial $\vec{x}^I$ in $f$.
Then a root of the polynomial $f$ is a vector
$\vec{x} = (x_1, \ldots, x_n)$ and $\vec{y}$ described in the previous paragraph in this case is given by
$$y_{I} = \vec{x}^{\tp I} =  \sum_j i_j x_j = \langle\vec{x}, I \rangle,$$ that is by the (classical) inner product of vectors $\vec{x}$ and $I$. 
Thus in this case 
$\phi_{\vec{y}}(I) = \langle\vec{x}, I \rangle$ is a partial linear function 
(note, that some coordinates $x_j$ of $\vec{x}$ might be infinite and then $y_I = \infty$ once $i_j\neq 0$ for at least one infinite coordinate $x_j$), 
which graph is a part of a hyperplane in $(n+1)$-dimensional space.
Note that here we assume that $0 \cdot \infty = 0$.
We introduce the notation $\chi_{\vec{x}} = -\phi_{\vec{y}}$
and we say that $\chi_{\vec{x}}$ is a \emph{partial hyperplane}.
Over $\bb{R}$ the function $\chi_{\vec{x}}$ corresponds to an ordinary hyperplane.
Thus, from Lemma~\ref{lem:singular_vs_solution} we get the following result.

\begin{lemma}
A vector $\vec{x} \in \bb{R}^n_{\infty}$ is a root to $f$ iff
the partial hyperplane $\chi_{\vec{x}}$ is singular to the function $\phi_f$.
\end{lemma}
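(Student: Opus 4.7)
The statement is essentially a direct specialization of Lemma~\ref{lem:singular_vs_solution} once one unpacks the definitions introduced just above. The plan is therefore to set up the correspondence between the root of $f$ and a solution of the single tropical linear equation whose coefficient vector is the coefficient vector $\vec{r}$ of $f$, and then to invoke Lemma~\ref{lem:singular_vs_solution}.

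First, I would fix $\vec{x} \in \bb{R}^n_\infty$ and define the vector $\vec{y}$ indexed by $I$ via $y_I = \langle \vec{x}, I\rangle = \vec{x}^{\tp I}$, using the convention $0 \cdot \infty = 0$; coordinates for which this inner product equals $\infty$ are regarded as undefined, i.e.\ they lie outside $\Dom(\phi_{\vec{y}})$. By the very definition introduced in the paragraph preceding the lemma, this gives $-\phi_{\vec{y}} = \chi_{\vec{x}}$, and on the side of $f$ we have $\phi_{\vec{r}} = \phi_f$.

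Second, I would verify that, with this choice of $\vec{y}$, the vector $\vec{y}$ is a solution of the tropical linear polynomial $\min_I\{y_I + r_I\}$ if and only if $\vec{x}$ is a root of $f$. This is immediate: substituting $y_I = \langle \vec{x}, I\rangle$ turns this linear polynomial into $\min_I\{r_I + \langle \vec{x}, I\rangle\} = f(\vec{x})$, and the condition that the minimum be attained on at least two indices $I$ or be equal to $\infty$ is exactly the definition of $\vec{x}$ being a root of $f$. Applying Lemma~\ref{lem:singular_vs_solution} to $\vec{y}$ and $\vec{r}$ now gives the equivalence with $-\phi_{\vec{y}} = \chi_{\vec{x}}$ being singular to $\phi_{\vec{r}} = \phi_f$, which is precisely the statement of the lemma.

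The only slightly delicate point, and the place where I would be careful, is the boundary behavior when some coordinate $x_j$ of $\vec{x}$ equals $\infty$. Then an index $I$ with $i_j \neq 0$ yields $y_I = \infty$, so $I \notin \Dom(\phi_{\vec{y}})$; equivalently, the corresponding monomial contributes $\infty$ to $f(\vec{x})$ and hence never participates in the minimum. I would check that this is consistent with both sides: on the root side, such monomials are simply dropped from $f(\vec{x})$; on the singularity side, these indices are excluded from $\Dom(\chi_{\vec{x}})$ and therefore from $\Dom(\chi_{\vec{x}}) \cap \Dom(\phi_f)$ over which singularity is tested. Since the two conventions match, no additional case analysis is needed and the reduction to Lemma~\ref{lem:singular_vs_solution} is clean.
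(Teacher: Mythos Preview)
Your proposal is correct and matches the paper's approach exactly: the paper itself states the lemma as an immediate consequence of Lemma~\ref{lem:singular_vs_solution}, with the identifications $\chi_{\vec{x}} = -\phi_{\vec{y}}$ and $\phi_{\vec{r}} = \phi_f$ spelled out in the paragraph preceding the statement. Your added care about the case of infinite coordinates of $\vec{x}$ is a welcome elaboration but not a departure from the paper's argument.
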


In particular, the system of polynomials $F$ has a root over $\bb{R}_{\infty}$ iff there is a partial hyperplane
singular to $\phi_i$ for all $i = 1, \ldots, k$.

As a result we have that, if there is a partial hyperplane singular to all $\phi_i$ for all $i= 1, \ldots, k$, then it clearly provides a solution to
$M_N$. This repeats the proof of the simple direction of the tropical dual Nullstellensatz theorem.
What we need to show for the opposite direction is that if there is some function singular to all translations of all $\phi_i$'s within some simplex $|I| \leq N$,
then there is also a singular partial hyperplane.

For the proof of Theorem~\ref{thm:main} it is convenient to use the language of polytopes.
We summarize it in the next definition.

\begin{definition}
To switch to polytope notation for a polynomial $f \in F$ we consider the graph $G(\phi_f) = \{(I, \phi_f(I)) \mid |I| \leq N,\ \phi_f(I)\neq \infty\}$ of the function $\phi_f$ and along with each point $(I, \phi_f(I))$ we consider all points $(I, t)$ above it, that is such that $t > \phi_f(I)$.
We take the convex hull in $\bb{R}^{n+1}$ of all these points and call the resulting polytope $P(f)$
the \emph{(extended) Newton polytope} of $f$. 
For the given system $F$ of polynomials $f_{1}, \ldots, f_{k}$ we denote the resulting convex polytopes
by $P_1, \ldots, P_k$.
We note that this construction is quite standard~\cite{IMS2009tropical,RGST05first_steps,sturmfels02equations}.
By the \emph{bottom} of $P(f)$ we denote the set of points $\vec{x} = (x_1, \ldots, x_n, x_{n+1}) \in P(f)$ such that
there are no points of $P(f)$ below them, that is for any $\eps > 0$ we have that $(x_1, \ldots, x_n, x_{n+1}- \eps) \notin P(f)$.
Note that the bottom of $P(f)$ can be considered as a graph of a partial function on $\bb{R}^n$.
We denote the restriction of this function to $\bb{Z}^n$ by $\beta_P \colon \bb{Z}^n \to \bb{R}$. 
For the case of polytopes $P_i$ we shorten this notation to $\beta_i$.
It is not hard to see geometrically that a partial hyperplane is singular to $\phi_f$ iff it is singular to $\beta_P$.
This is not necessarily true for an arbitrary function $\phi_{\vec{a}}$ instead of a hyperplane.
\end{definition}


\begin{remark}
We note that in the paper~\cite{Grigoriev12null} the conjecture on the tropical dual Nullstellensatz was considered
not for the original Macaulay matrix, but for the Macaulay matrix in which we already switch to the convex hulls of the graphs of polynomials in $F$,
that is instead of values of functions $\phi_i$ rows of Macaulay matrix contained graphs of functions $\beta_{P_i}$.
Our proofs works for both settings, but we consider the original Macaulay matrix being more natural.
\end{remark}

\begin{remark} \label{rk:colors}
For the min-plus case we analogously associate to
each polynomial $(f_i, g_i) \in F$ the function $\phi_i$.
We let $\phi_i(I)$ to the minimum of the coefficients of the monomial $\vec{x}^I$
in $f_i$ and $g_i$. Additionally we introduce colors to the points of the graphs $G(\phi_i)$.
If $\phi_i(I)$ is equal to the coefficient of $\vec{x}^I$ in $f_i$, then we color $(I,\phi_i(I))$ in black,
and if $\phi_i(I)$ is equal to the coefficient of $\vec{x}^I$ in $g_i$, then we color $(I,\phi_i(I))$ in white.
Note that we allow the same point $I$ be labeled by both colors simultaneously.
The notion of singularity changes in that now we require that there are either no singular points, or at least one black singular point and at least one white singular point.
Note, however, that we are satisfied if there is only one singular point, but labeled with both colors.
Analogous analysis shows that this notion of singularity captures the notion of min-plus solvability
in the same way as in the tropical case.

Newton polytopes $P_i$ are introduced in the same way as before (in the construction of the polytope we ignore the colors).
But note that now some points of $P_i$ are labeled by colors. In particular,
all vertices of polytopes are labeled.
\end{remark}

\paragraph{Convex polytopes.}

A convex polytope $P$ in $n$-dimensional space can be specified by a set of (classical) linear functions $E_1(\vec{x}), \ldots, E_{l}(\vec{x}), L_1(\vec{x}), \ldots, L_k(\vec{x})$, where $\vec{x} \in \bb{R}^n$:
$P$ is the set of points
$\vec{x} \in \bb{R}^{n}$ such that $E_{i}(\vec{x}) = 0$ for all $i=1,\ldots, l$ and $L_{i}(\vec{x}) \geq 0$ for all $i=1, \ldots, k$.
We assume that none of $L_i(\vec{x})$ evaluates to $0$ on the set $\{\vec{x} | \forall i E_i(\vec{x}) = 0 \}$.
Any face of a polytope can be specified by a nonempty set $S \subseteq \{1, \ldots, k\}$.
The face corresponding to $S$ is the set of points $\vec{x} \in P$ such that $L_i(\vec{x})=0$ for all $i\in S$.

The boundary of the polytope is the union of all its faces.
The interior $\inter{P}$ of the polytope $P$ is the set of its non-boundary points.

For polytopes $P_1$ and $P_2$ we denote by $P_{1} + P_2$ the Minkowski sum of these polytopes.
For natural $k$ we use the notation $kP = P + \ldots + P$, where there are $k$ summands on the right-hand side.
For an $n$-dimensional vector $\vec{\alpha}$ we denote by $P + \vec{\alpha}$ the translation of $P$ by the vector $\vec{\alpha}$.
That is,
$$
P + \vec{\alpha} = \{\vec{x} + \vec{\alpha} \mid \vec{x} \in P\}.
$$
By the homothety with a center $\vec{x} \in \bb{R}^n$
and a coefficient $\lambda > 0$ we denote the following bijective transformation $h_{\vec{x}}^\lambda$ of the space $\bb{R}^n$:
the point $\vec{y} \in \bb{R}^n$ is sent to the point $h_{\vec{x}}^\lambda=\vec{x} + \lambda (\vec{y} - \vec{x})$.
Note that $kP$ is an image of $P$ under the homothety $h^k_{\vec{0}}$.
It is well known that translations and homotheties with the composition operation form a group called dilation group. In particular, an arbitrary composition of translations and homotheties results in a homothety or a translation. Below we will use this fact without mentioning.

\begin{definition}
Consider a polytope $P \subseteq \bb{R}^n$, a set of points $Q\subseteq \bb{R}^n$ and a point $\vec{x}$ on the boundary of $P$.
We say that $Q$ \emph{touches} $P$ at $\vec{x}$ iff
\begin{enumerate}
  \item $Q \subseteq P$;
  \item $\vec{x} \in Q$;
  \item if $Q$ contains a point $\vec{y}$ on the boundary of $P$, then $\vec{y}$ lies in a face of $P$ containing $\vec{x}$.
\end{enumerate}
\end{definition}

Below we collect some facts we will need about the structure of convex polytopes.
Though they are simple and intuitive we give the proofs of them for the sake of completeness.

\begin{lemma} \label{lem:boundary}
Let $P$ be a convex polytope and let $\vec{x},\vec{y},\vec{z}$ be three distinct points in $P$
lying on the same line in the specified order.
Then if $\vec{y}$ belongs to some face of $P$ then $\vec{x}$ also belongs to the same face of $P$.
\end{lemma}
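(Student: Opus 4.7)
The plan is straightforward because this is a routine convexity statement about supporting hyperplanes. Using the presentation fixed just above the lemma, a face of $P$ is specified by a nonempty $S \subseteq \{1,\dots,k\}$ and consists of all $\vec{u} \in P$ with $L_i(\vec{u}) = 0$ for every $i \in S$. So if $\vec{y}$ belongs to some face, there is such an $S$ with $L_i(\vec{y}) = 0$ for all $i \in S$, and the goal is to show $L_i(\vec{x}) = 0$ for all $i \in S$.

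First I would use the hypothesis that $\vec{x},\vec{y},\vec{z}$ lie on a common line in that order: since they are distinct and collinear with $\vec{y}$ between $\vec{x}$ and $\vec{z}$, there exists $\lambda \in (0,1)$ with $\vec{y} = \lambda \vec{x} + (1-\lambda)\vec{z}$. Next, since each $L_i$ is an affine linear function, evaluating at this convex combination yields
\[
L_i(\vec{y}) = \lambda L_i(\vec{x}) + (1-\lambda) L_i(\vec{z}).
\]
Because $\vec{x},\vec{z} \in P$, we have $L_i(\vec{x}) \geq 0$ and $L_i(\vec{z}) \geq 0$ for every $i$. For $i \in S$ the left-hand side equals $0$, so a sum of two nonnegative terms with strictly positive coefficients $\lambda$ and $1-\lambda$ vanishes; this forces $L_i(\vec{x}) = L_i(\vec{z}) = 0$.

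Thus $\vec{x}$ satisfies the defining equalities of the face indexed by $S$, and since $\vec{x} \in P$ the remaining constraints $L_j(\vec{x}) \geq 0$ and $E_j(\vec{x})=0$ hold automatically; hence $\vec{x}$ lies in the same face. (As a byproduct one sees that $\vec{z}$ does too, but only $\vec{x}$ is needed for the statement.) There is no real obstacle here: the only thing to be careful about is that the ``specified order'' guarantees strict convex combination ($\lambda \notin \{0,1\}$), which is exactly what is needed to conclude $L_i(\vec{x}) = 0$ from $L_i(\vec{y})=0$.
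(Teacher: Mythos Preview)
Your proof is correct and follows essentially the same approach as the paper: both exploit the affine linearity of the defining functionals $L_i$ together with the fact that $\vec{y}$ is a strict convex combination of $\vec{x}$ and $\vec{z}$. The paper phrases the argument as a short contradiction (if $L_i(\vec{x})>0$ then $L_i(\vec{z})<0$, contradicting $\vec{z}\in P$), while you give the equivalent direct version, but the content is identical.
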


\begin{proof}
Suppose on the contrary that $\vec{y}$ belongs to some face and $\vec{x}$ does not. Then there is some inequality $L$ among linear inequalities defining
$P$ such that $L(\vec{x}) > 0$ and $L(\vec{y})=0$. 
The restriction of values of $L$ to the line containing $\vec{x}$, $\vec{y}$ and $\vec{z}$
is a linear function and hence, clearly, $L(\vec{z})<0$. Therefore $\vec{z}$ is not in $P$ and thus we have a contradiction.
\end{proof}

\begin{corollary} \label{cor:boundary}
Let $P$ be a convex polytope and let $\vec{x},\vec{y},\vec{z},\vec{t}$ be four distinct points in it
lying on the same line in the specified order.
Then $\vec{y}$ belongs to some face of $P$ iff $\vec{z}$ belongs to the same face.
\end{corollary}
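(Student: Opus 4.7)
The plan is to derive Corollary~\ref{cor:boundary} directly from Lemma~\ref{lem:boundary} by applying the lemma to two different ordered subtriples of the four collinear points $\vec{x},\vec{y},\vec{z},\vec{t}$.

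For the direction ``$\vec{y}$ lies on a face $F$ of $P$ implies $\vec{z}$ lies on $F$'', I would apply Lemma~\ref{lem:boundary} to the ordered triple $(\vec{z},\vec{y},\vec{x})$. These are three distinct points of $P$ on a common line, appearing in this order when one traverses the original line in the opposite direction. The middle point of this triple is $\vec{y}$, which lies on $F$ by hypothesis, so the lemma concludes that the first point of the triple, namely $\vec{z}$, also lies on $F$.

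For the converse ``$\vec{z}$ lies on $F$ implies $\vec{y}$ lies on $F$'', I would apply Lemma~\ref{lem:boundary} to the ordered triple $(\vec{y},\vec{z},\vec{t})$. Here $\vec{z}$ is the middle point and lies on $F$ by hypothesis, so the lemma places the first point $\vec{y}$ of this triple on $F$ as well.

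No genuine technical obstacle arises: the only thing to check in each application is that the chosen three points are distinct, collinear, and belong to $P$ in the specified order, and all three properties are immediate consequences of the hypothesis on $\vec{x},\vec{y},\vec{z},\vec{t}$. Both halves of the biconditional are thus obtained by using Lemma~\ref{lem:boundary} as a black box on two different reindexings of the original quadruple.
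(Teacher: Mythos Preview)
Your proof is correct and is essentially identical to the paper's own proof: the paper also applies Lemma~\ref{lem:boundary} to the triples $(\vec{y},\vec{z},\vec{t})$ and $(\vec{z},\vec{y},\vec{x})$, obtaining the two directions of the biconditional in exactly the way you describe.
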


\begin{proof}
Just apply Lemma~\ref{lem:boundary} to the points $\vec{y}, \vec{z}, \vec{t}$ and to the points $\vec{z}, \vec{y}, \vec{x}$.
\end{proof}

\begin{lemma} \label{lem:homothety}
Let $P$ be a convex polytope and let $\vec{x}$ be a point in $P$. Let $P^\prime$ be an image of 
$P$ under $h^{\lambda}_{\vec{x}}$ for $\lambda > 1$.
If $P$ contains a point on some face of $P^{\p}$ then this face contains $\vec{x}$.
\end{lemma}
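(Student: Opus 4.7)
The plan is to apply Lemma~\ref{lem:boundary} to the enlarged polytope $P'$ itself, using the point $\vec{x}$ together with a point in $P$ lying on the face and its image under the homothety.

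First I would dispatch the trivial subcase $\vec{z}=\vec{x}$, where $\vec{z}\in P$ is the given point on the face $F$ of $P'$: in that case $\vec{x}=\vec{z}\in F$ and there is nothing to show. So assume $\vec{z}\neq\vec{x}$.

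Next, I would set up three collinear points in $P'$. Since $\lambda>1$, the homothety $h^{\lambda}_{\vec{x}}$ satisfies $h^{\lambda}_{\vec{x}}(\vec{y})=\vec{x}+\lambda(\vec{y}-\vec{x})$, so the three points $\vec{x}$, $\vec{z}$ and $\vec{w}:=h^{\lambda}_{\vec{x}}(\vec{z})$ are collinear, pairwise distinct (because $\vec{z}\neq\vec{x}$ and $\lambda>1$), and appear on the ray emanating from $\vec{x}$ through $\vec{z}$ in exactly the order $\vec{x},\vec{z},\vec{w}$. Moreover all three lie in $P'$: $\vec{z}\in P\subseteq P'$ (since $P$ is the image of $P'$ under $h^{1/\lambda}_{\vec{x}}$ applied to $P'$, one checks $P\subseteq P'$ directly from $\vec{x}\in P$ and convexity), $\vec{w}\in P'$ by construction, and $\vec{x}\in P\subseteq P'$.

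Finally, I would apply Lemma~\ref{lem:boundary} to the polytope $P'$ with the three points $\vec{x},\vec{z},\vec{w}$ in this order. By hypothesis the middle point $\vec{z}$ lies on the face $F$ of $P'$, so the lemma forces the endpoint $\vec{x}$ to lie on the same face $F$, which is exactly the claim. The only mildly delicate step is verifying the inclusion $P\subseteq P'$ used to locate $\vec{x}$ and $\vec{z}$ inside $P'$; this follows because for any $\vec{y}\in P$ we have $\vec{y}=\vec{x}+\tfrac{1}{\lambda}\bigl(h^{\lambda}_{\vec{x}}(\vec{y})-\vec{x}\bigr)=\tfrac{1}{\lambda}h^{\lambda}_{\vec{x}}(\vec{y})+(1-\tfrac{1}{\lambda})\vec{x}$, a convex combination of two points of $P'$, and $P'$ is convex.
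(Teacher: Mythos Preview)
Your proof is correct and follows essentially the same approach as the paper: take the point of $P$ lying on the face, form its image under $h^{\lambda}_{\vec{x}}$, observe that $\vec{x}$, the point, and its image are collinear in this order inside $P'$, and apply Lemma~\ref{lem:boundary}. You are in fact slightly more careful than the paper, explicitly treating the degenerate case $\vec{z}=\vec{x}$ and verifying the inclusion $P\subseteq P'$ via the convex-combination identity.
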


\begin{proof}
Let $\vec{y}$ be a point of $P$. Then the point 
$$
\vec{z} = 
h^\lambda_{\vec{x}}(\vec{y}) = 
\vec{x} - \lambda (\vec{y} - \vec{x})
$$
lies in $P^\p$
and $\vec{x}, \vec{y}, \vec{z}$ lie on the same line in the specified order.
Thus by Lemma~\ref{lem:boundary} if $\vec{y}$ is on some face of $P^\p$ then $\vec{x}$ is also on this face.
\end{proof}

\subsection{The enveloping polytope} \label{sec:env_polytope}

The key idea of the proof of Theorem~\ref{thm:main} is to consider a large extended Newton polytope $P_0$ ``enveloping'' all polytopes $P_1, \ldots, P_k$ corresponding to tropical polynomials $f_1, \ldots, f_k$.
The main property of $P_0$ we will ensure in this subsection is that for each point $\vec{x}$ on its bottom and
for any $i$ there is a translation $P_i + \vec{\alpha}$ of the polytope $P_i$ such that $P_i + \vec{\alpha}$ touches $P_0$ at $\vec{x}$ (Lemma~\ref{lem:touching}).
Using this property in the next subsection we will show that 
for a solution $\vec{a}$ of the tropical linear system $M_N \tp \vec{y}$ there is a singular point in $\Sing(\vec{a}, \beta_{P_0})$ such that the facet of $P_0$ containing the point $(\vec{a}, \beta_{P_0}(\vec{a}))$
gives a root to the system $F$ (Lemma~\ref{lem:facet}).

It turns out that for $P_0$ we can just take the Minkowski sum of $P_1, \ldots, P_k$ multiplied by a large enough number.
We just let
\begin{equation} \label{eq.polytope}
P_0 = (n+2)\cdot \left(P_1 + \ldots + P_k\right).
\end{equation}

To ensure the desired property of $P_0$ we need the following general fact on convex polytopes.

\begin{lemma} \label{lem:discretization}
Let $P$ be an $n$-dimensional convex polytope and let $P^{\p} = (n+2) P$.
Then for each point $\vec{x} \in \inter{P}^{\p}$ there is a translation $P + \vec{\alpha}$ of $P$ and the homothety $h^{n+2}_{\vec{y}}$ mapping $P + \vec{\alpha}$ to $P^{\p}$ with the following properties:
\begin{enumerate}
\item the center $\vec{y}$ of the homothety lies in $\inter{P}^\p$;
\item $\vec{x}$ is a vertex of $P + \vec{\alpha}$.
\end{enumerate}
\end{lemma}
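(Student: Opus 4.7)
The plan is to reduce the lemma to the following combinatorial claim: for every $\vec{x}\in\inter{P^\p}$ there exists a vertex $v^\ast$ of $P$ with $(\vec{x}-v^\ast)/(n+1)\in\inter P$. The reduction is a mechanical computation. Once we decide which vertex $v^\ast$ of $P$ should be mapped to $\vec{x}$, the translation vector is forced to be $\vec{\alpha}:=\vec{x}-v^\ast$ (so that $\vec{x}=v^\ast+\vec{\alpha}$ is a vertex of $P+\vec{\alpha}$). Expanding
$$
h^{n+2}_{\vec{y}}(P+\vec{\alpha})=(n+2)P+(n+2)\vec{\alpha}-(n+1)\vec{y}
$$
and equating with $P^\p=(n+2)P$ pins down $\vec{y}=\frac{n+2}{n+1}(\vec{x}-v^\ast)$; the condition $\vec{y}\in\inter{P^\p}$ then becomes exactly $(\vec{x}-v^\ast)/(n+1)\in\inter P$.

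To prove the combinatorial claim, set $\vec{q}:=\vec{x}/(n+2)\in\inter P$ and apply Carath\'eodory's theorem: $\vec{q}=\sum_{i=1}^{k}\lambda_i v_i$ for some $k\le n+1$ distinct vertices $v_i$ of $P$, with $\lambda_i>0$ and $\sum_i\lambda_i=1$ (drop zero-weight terms from the standard Carath\'eodory representation). By pigeonhole some coefficient satisfies $\lambda_j\ge 1/k\ge 1/(n+1)>1/(n+2)$. Set $v^\ast:=v_j$; then a direct computation yields
$$
\vec{q}':=\frac{\vec{x}-v_j}{n+1}=\frac{(n+2)\lambda_j-1}{n+1}\,v_j+\sum_{i\ne j}\frac{(n+2)\lambda_i}{n+1}\,v_i,
$$
a convex combination whose vertex support is exactly $\{v_1,\dots,v_k\}$ and all of whose coefficients are strictly positive (the coefficient of $v_j$ is positive precisely thanks to the slack $\lambda_j>1/(n+2)$).

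It remains to verify $\vec{q}'\in\inter P$. For each defining inequality $L_\ell\ge 0$ of $P$, the hypothesis $\vec{q}\in\inter P$ together with the representation $\vec{q}=\sum_i\lambda_i v_i$ forces at least one $v_i$ in the support to satisfy $L_\ell(v_i)>0$, since otherwise every $v_i$ in the support (and hence $\vec{q}$ itself) would lie on the facet $\{L_\ell=0\}$. Because $\vec{q}'$ uses the same support with strictly positive coefficients, the same vertex yields $L_\ell(\vec{q}')>0$, so $\vec{q}'\in\inter P$ and the lemma follows. The delicate point is the interplay between two constraints on the chosen index $j$: one needs both the quantitative pigeonhole inequality $\lambda_j>1/(n+2)$ (so that subtracting $v_j$ keeps its own coefficient in $\vec{q}'$ positive) and the qualitative property that the support is not collapsed onto a single facet of $P$. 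The factor $n+2$ in the statement is tuned precisely so that Carath\'eodory's bound of $n+1$ vertices combined with pigeonhole delivers the strict slack $\lambda_j\ge 1/(n+1)>1/(n+2)$ that makes both conditions hold simultaneously.
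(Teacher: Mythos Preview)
Your proof is correct and follows essentially the same route as the paper: both apply Carath\'eodory to write the target point as a convex combination of at most $n+1$ vertices, pick the vertex $v^\ast$ with the largest barycentric coefficient (so that $\lambda_j\ge 1/(n+1)>1/(n+2)$), and translate $P$ so that this vertex lands on $\vec{x}$; a direct computation shows the resulting $\vec{\alpha}$ and $\vec{y}$ coincide with the paper's. The only difference is cosmetic: to check $\vec{y}\in\inter{P'}$ the paper places $\vec{y}$ on the segment between $\vec{x}$ and the complementary barycenter $v'$ and invokes its collinearity Lemma~\ref{lem:boundary}, whereas you rescale to $P$ and argue facet by facet that the convex combination for $\vec{q}'$ has the same positive support as the one for $\vec{q}\in\inter P$ --- both verifications are short and equivalent.
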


It is easy to see that the first property is equivalent to the fact that $P + \vec{\alpha} \subseteq \inter{P}^\p$,
but the current form of the lemma will be more convenient for us.

The main tools in the proof of this lemma are Caratheodory's Theorem, the notion of the center of mass and homothety transformations.

\begin{proof}

We first give a proof sketch and then proceed to the detailed formal proof.
Since $\vec{x}$ is in $P^\p$ it lies in some simplex $S^\p$ generated by $n+1$ vertices of $P^\p$.
For $S^\p$ we consider each of its vertices and make a homothety with the center in it and the coefficient $(n+1)/(n+2)$.
The resulting $(n+1)$ simplices cover all $S^\p$ (even with overlap).
So $\vec{x}$ lies in one of them, say in the one determined by the vertex $v^\p_1$. Then we can consider the translation $S+\vec{\alpha}$
of the simplex $S$ which is $(n+2)$ times smaller than $S^\p$
such that its vertex corresponding to $v^\p_1$ is mapped into $\vec{x}$. Then $S + \vec{\alpha}$ lies in $S^\p$.
Now we can consider $P^\p$ and note that $P+\vec{\alpha}$ is in $P^\p$.

Now we give a formal proof following the outline above.
Since $\vec{x}$ is a point in the convex polytope $P^{\p}$ it lies in the convex hull of its vertices.
By Caratheodory's Theorem there are $n+1$ vertices $v^\p_1, \ldots, v^\p_{n+1}$ of $P^{\p}$ such that $\vec{x} \in \Conv\{v^\p_1, \ldots, v^\p_{n+1}\}$.
We denote the latter simplex by $S^{\p}$ and
denote the corresponding vertices of $P$ by $v_1, \ldots, v_{n+1}$.


Let $w_1, \ldots w_{n+1}$ be the barycentric coordinates of $\vec{x}$ with respect to $v^\p_1, \ldots, v^\p_{n+1}$, that is $w_i \geq 0$ for all $i$, $\sum_i w_i = 1$ and
$$
\vec{x} = \sum_i w_i v^\p_i.
$$
Without loss of generality let $w_1$ be the largest among $w_i$. Then $n w_1 \geq w_2 + \ldots + w_{n+1}$.
Let
$$
v^\p = \frac{1}{\sum_{i=2}^{n+1} w_i} \sum_{i=2}^{n+1} w_i v^\p_i.
$$
Then $v^\p \in \Conv\{v^\p_2, \ldots v^\p_{n+1}\}$, there is a relation
$$
\vec{x} = w_1 v_1^\p + \left(\sum_{i=2}^{n+1} w_i\right) v^\p
$$
and thus the points $v^\p_1$, $\vec{x}$ and $v^\p$
are on the same line. Moreover, $|\vec{x} - v^\p_{1}| \leq n |v^\p - \vec{x}| < (n+1)|v^\p - \vec{x}|$ (observe that $|v^\p - \vec{x}|$ is nonzero since $w_1$ is nonzero being the largest weight).
Consider a point $\vec{y}$ on the same line between the points $\vec{x}$ and $\vec{v}^\p$ and such that $(n+1)|\vec{x} - \vec{y}| =  |v_1^\p - \vec{x}|$.

Now consider the polytope $P$ and consider its translation $P + \alpha$ by which $v_1$ is mapped to $\vec{x}$. The homothety $h_{\vec{y}}^{n+2}$ sends $\vec{x}$ to $v_1$ and since $(n+2)P$ and $P^\p$ are equal, the image of $P + \alpha$ under this homothety is $P^\p$.


It is only left
to note that the points $\vec{x}$, $\vec{y}$, $v^\p$ lie on the same line in the specified order
and all lie in $P^\p$. Thus by Lemma~\ref{lem:boundary} since $\vec{x} \in \inter{P}^{\p}$ we have  $\vec{y} \in \inter{P}^{\p}$.
\end{proof}

\begin{remark}
We note that Lemma~\ref{lem:discretization} does not hold for $P^{\prime} = (n+1) P$.
The example is very simple, just let $P$ to be a standard simplex, that is a convex hull of points $\{0, \vec{e}_1, \ldots, \vec{e}_n\}$.
Then $P^{\prime} = (n+1) P$ is a convex hull of points $\{\vec{0}, (n+1)\vec{e}_1, \ldots, (n+1)\vec{e}_n\}$.
Let $\vec{x}$ be the center of the polytope $P^{\prime}$, that is $\vec{x} = \vec{e}_1 + \ldots + \vec{e}_n$.
Then for $\vec{x}$ to be a vertex of $P + \vec{\alpha}$ we should have that either $\vec{\alpha} = \vec{e}_1 + \ldots + \vec{e}_n$,
or $\vec{\alpha} = \vec{e}_1 + \ldots + \vec{e}_{i-1} + \vec{e}_{i+1} + \ldots + \vec{e}_n$ for some $i$.
In the first case $\vec{y} = (n+1)(\vec{e}_1 + \ldots + \vec{e}_n)/n$ and in the second case $\vec{y} = (n+1)(\vec{e}_1 + \ldots + \vec{e}_{i-1} + \vec{e}_{i+1} + \ldots + \vec{e}_n)/n$.
In both cases $\vec{y}$ lies on the boundary of $P^{\prime}$: in the first case it is in the convex hull of $\{\vec{e}_1, \ldots, \vec{e}_n\}$ and in the second case it is
in the convex hull of $\{\vec{0}, \vec{e}_1, \ldots, \vec{e}_{i-1}, \vec{e}_{i+1}, \ldots, \vec{e}_n\}$.
\end{remark}

\begin{lemma} \label{lem:touching}
For any point $\vec{x}$ on the bottom of $P_0$ and for any $P_j$ there is $\vec{\alpha}$ such that $P_j + \vec{\alpha}$ touches $P_0$ at $\vec{x}$.
\end{lemma}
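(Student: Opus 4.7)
The plan is to apply Lemma~\ref{lem:discretization} to the Minkowski sum $Q = P_1 + \ldots + P_k$, exploiting the identity $P_0 = (n+2)Q$. First, I would warm up with the interior case: for an interior point $\vec{x}'\in\inter P_0$, Lemma~\ref{lem:discretization} produces a translate $Q+\vec{\alpha}'\subseteq P_0$ with $\vec{x}'$ a vertex, such that the homothety center $\vec{y}$ lies in $\inter P_0$. Lemma~\ref{lem:homothety}, applied to $h^{n+2}_{\vec{y}}$, then rules out any intersection of $Q+\vec{\alpha}'$ with $\partial P_0$, so the touching condition becomes vacuous.

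Second, I would pass from $Q$ to $P_j$ using the standard Minkowski decomposition of vertices: since $\vec{x}' - \vec{\alpha}'$ is a vertex of $Q = P_1 + \ldots + P_k$, it uniquely decomposes as $v_1+\ldots+v_k$ where each $v_i$ is a vertex of $P_i$ (being the maximizer of a linear functional whose sum-maximizer is $\vec{x}'-\vec{\alpha}'$). Setting
\[
\vec{\alpha} \;=\; \vec{\alpha}' + \sum_{i\neq j} v_i,
\]
one gets $P_j+\vec{\alpha}\subseteq Q+\vec{\alpha}'\subseteq P_0$ and $v_j+\vec{\alpha}=\vec{x}'$, so $\vec{x}'\in P_j+\vec{\alpha}$; the touching property is inherited from $Q+\vec{\alpha}'$.

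Third, to handle $\vec{x}$ lying on the bottom of $P_0$ (hence on $\partial P_0$, not in $\inter P_0$), I would use a perturbation-and-limit argument: since the extended Newton polytope $P_0$ is unbounded upward, the points $\vec{x}_\varepsilon = \vec{x} + \varepsilon e_{n+1}$ lie in $\inter P_0$ for all $\varepsilon>0$. Applying the first two steps gives translates $P_j+\vec{\alpha}_\varepsilon$ with the desired properties relative to $\vec{x}_\varepsilon$. Because $\vec{x}_\varepsilon\in P_j+\vec{\alpha}_\varepsilon\subseteq P_0$ and $P_0$ is bounded in the first $n$ coordinates, the $\vec{\alpha}_\varepsilon$ are bounded, so a subsequence converges to some $\vec{\alpha}$. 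Continuity gives $\vec{x}\in P_j+\vec{\alpha}$ and $P_j+\vec{\alpha}\subseteq P_0$.

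The main obstacle is the third touching condition in the limit: a priori, $P_j+\vec{\alpha}$ could touch $\partial P_0$ at points unrelated to $\vec{x}$. To handle this, I would either extend Lemma~\ref{lem:discretization} to boundary points (showing that, if $\vec{x}\in F$ for a face $F$ of $P_0$, then the construction can be run with all the chosen vertices $v'_1,\ldots,v'_{n+1}$ inside $F$ via Caratheodory applied within $F$, so $\vec{y}\in F$ as well), or verify directly using Lemma~\ref{lem:homothety}: any $\vec{z}\in(P_j+\vec{\alpha})\cap\partial P_0$ must, by the homothety relating $Q+\vec{\alpha}'$ and $P_0$, lie on a face of $P_0$ containing the limiting center $\vec{y}$, which by construction lies on the minimal face of $P_0$ through $\vec{x}$. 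Lemma~\ref{lem:boundary} then forces $\vec{x}$ onto that same face, finishing the touching condition.
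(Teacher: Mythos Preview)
Your perturbation-and-limit strategy has a genuine gap, and your own ``fix (a)'' is in fact the paper's argument --- which makes the perturbation machinery unnecessary. Concretely: Lemma~\ref{lem:discretization} is stated and proved for bounded polytopes via Carath\'eodory on the vertices, but $P_0$ is an extended Newton polytope, unbounded in the $e_{n+1}$ direction; an interior point $\vec{x}_\varepsilon = \vec{x}+\varepsilon e_{n+1}$ need not lie in the convex hull of the vertices of $P_0$ at all (think of a flat bottom), so the lemma does not apply to $\vec{x}_\varepsilon$. Even setting this aside, your fix (b) asserts that the limiting homothety center $\vec{y}$ ``by construction lies on the minimal face of $P_0$ through $\vec{x}$,'' but nothing in the construction guarantees this: the Carath\'eodory simplex chosen for $\vec{x}_\varepsilon$ is not canonical and can jump as $\varepsilon\to 0$, so after passing to a subsequence the limit $\vec{y}$ could land on a proper subface of the minimal face through $\vec{x}$ --- and then there exist faces of $P_0$ containing $\vec{y}$ but not $\vec{x}$, breaking the touching condition.

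The paper avoids all of this by working directly on the boundary. Let $Q_0$ be the minimal face of $P_0$ containing $\vec{x}$; since $\vec{x}$ is on the bottom, $Q_0$ is bounded, and since $P_0=(n+2)P$ there is a face $Q$ of $P$ with $Q_0=(n+2)Q$. One applies Lemma~\ref{lem:discretization} \emph{inside} $Q_0$ (treating it as a full-dimensional polytope in its affine span, noting that the factor $n+2$ is at least $\dim Q_0 + 2$): this yields a translate $Q+\vec{\alpha}'$ with $\vec{x}$ a vertex and homothety center $\vec{y}\in\inter Q_0$. The same homothety $h^{n+2}_{\vec{y}}$ sends $P+\vec{\alpha}'$ to $P_0$, and Lemma~\ref{lem:homothety} then shows $P+\vec{\alpha}'$ meets $\partial P_0$ only in faces through $\vec{y}$; since $\vec{y}\in\inter Q_0$, every face through $\vec{y}$ contains $Q_0$ and hence $\vec{x}$. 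Your step from $P$ down to $P_j$ via the Minkowski vertex decomposition is correct and matches the paper.
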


\begin{proof}
Let
$$
P = P_1 + \ldots + P_k.
$$
Thus $P_0 = (n+2)P$.

First we show that there is a translation of $P$ touching $P_0$ at $\vec{x}$.

If $\vec{x}$ is a vertex of $P_0$ then just note that there is a translation $P + \vec{\alpha}^\p$ lying inside of $P_0$
and containing $\vec{x}$ (since $P_0$ is a Minkowski sum and $P$ is a summand, $P_0$ can be viewed as a union of translations of $P$). Since $\vec{x}$ is a vertex of $P_0$ it is also a vertex of $P + \vec{\alpha}^\p$.
The homothety $h_{\vec{x}}^{n+2}$ sends $\vec{x}$ as a vertex of $P + \vec{\alpha}^\p$ into $\vec{x}$ as a corresponding vertex of $P_0$ and thus sends $P + \vec{\alpha}^\p$ to $P_0$.
Then by Lemma~\ref{lem:homothety} $P + \vec{\alpha}^\p$ touches $P_0$ at $\vec{x}$.

If $\vec{x}$ is not a vertex of $P_0$
denote the minimal dimension face of $P_0$ containing $\vec{x}$ by $Q_0$.
Clearly, $\vec{x}$ is in the interior of $Q_0$.
Since $P_0 = (n+2)P$ we have that there is a face $Q$ of $P$ such that $Q_0 = (n+2)Q$.
By Lemma~\ref{lem:discretization}
we can find a translation $Q+\vec{\alpha}^\p$ such that $\vec{x}$ is a vertex of $Q+\vec{\alpha}^\p$
and $Q+\vec{\alpha}^\p \subseteq Q_0$.
This lemma also gives us the homothety $h_{\vec{y}}^{n+2}$ which center $\vec{y}$ lies in the interior of $Q_0$.
Now let us consider $P + \vec{\alpha}^\p$
and consider its image under $h_{\vec{y}}^{n+2}$.
The vertex $\vec{x}$ goes under this homothety to the corresponding vertex of $P_0$ and thus $P + \vec{\alpha}^\p$ goes to $P_{0}$.
Note that by Lemma~\ref{lem:homothety} we also get that $P + \vec{\alpha}^\p$ intersects $P_{0}$
only in the faces incident to $\vec{y}$ and thus only in the faces incident to $\vec{x}$.

Now note that $P + \vec{\alpha}^\p$ is the translation of the Minkowski sum of $P_1+ \ldots + P_k$, thus for each of $P_j$
there is a translation $\vec{\alpha}$ such that $P_j + \vec{\alpha}$ is in $P + \vec{\alpha}^\p$ and contains the point $\vec{x}$.
Since this point is a vertex of $P + \vec{\alpha}^\p$ we have that $\vec{x}$ is
a vertex of $P_{j} + \vec{\alpha}$.
Note that $P_j + \vec{\alpha}$ lies inside of $P + \vec{\alpha}^\p$ and thus also can intersect the boundary of $P_{0}$
only in the faces containing $\vec{x}$.
\end{proof}

\begin{corollary} \label{cor:touching}
For any point $\vec{x}$ on the bottom of $P_0$ and for any $f_j$ there is $\vec{\alpha}$ such that $G(\phi_j) + \vec{\alpha}$ touches $P_0$ at $\vec{x}$.
\end{corollary}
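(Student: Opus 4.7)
The plan is to derive this corollary essentially as a direct consequence of Lemma~\ref{lem:touching}. I would first recall two basic facts about the extended Newton polytope $P_j = P(f_j)$: (a) by construction $G(\phi_j)$ is a subset of $P_j$ (the graph consists precisely of the ``lowest'' integer-lattice points from which $P_j$ is built); and (b) since $P_j$ is unbounded only in the direction of increasing last coordinate, every vertex of $P_j$ lies on its bottom, and the bottom vertices are of the form $(I,\phi_j(I))$ for some $I \in \mathrm{Dom}(\phi_j)$. In particular, every vertex of $P_j$ belongs to $G(\phi_j)$, and translating preserves this: every vertex of $P_j+\vec{\alpha}$ belongs to $G(\phi_j)+\vec{\alpha}$.

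Next I would apply Lemma~\ref{lem:touching} to get $\vec{\alpha}$ such that $P_j+\vec{\alpha}$ touches $P_0$ at $\vec{x}$. Inspecting the proof of that lemma, the witness $\vec{\alpha}$ is chosen precisely so that $\vec{x}$ becomes a vertex of $P_j+\vec{\alpha}$ (this is true in both the ``$\vec{x}$ is a vertex of $P_0$'' case and the ``interior-of-a-face'' case handled via Lemma~\ref{lem:discretization}). Combining with observation (b), we get $\vec{x} \in G(\phi_j)+\vec{\alpha}$, which is the second clause of the definition of ``touching''.

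It then remains to verify the other two clauses for $G(\phi_j)+\vec{\alpha}$. The inclusion $G(\phi_j)+\vec{\alpha} \subseteq P_0$ is immediate from $G(\phi_j)+\vec{\alpha}\subseteq P_j+\vec{\alpha}\subseteq P_0$ via observation (a) and Lemma~\ref{lem:touching}. For the third clause, if $\vec{y}\in G(\phi_j)+\vec{\alpha}$ lies on the boundary of $P_0$, then $\vec{y}$ is also a point of $P_j+\vec{\alpha}$ on $\partial P_0$, and by the touching property already established for $P_j+\vec{\alpha}$ the point $\vec{y}$ must lie in a face of $P_0$ containing $\vec{x}$. Thus $G(\phi_j)+\vec{\alpha}$ satisfies all three conditions and touches $P_0$ at $\vec{x}$.

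I do not anticipate any real obstacle here; the corollary is essentially a bookkeeping step converting the polytope-level touching statement into a graph-level one. The only point that deserves care is pinning down that the $\vec{\alpha}$ produced by Lemma~\ref{lem:touching} makes $\vec{x}$ a genuine vertex of $P_j+\vec{\alpha}$ (not merely a boundary point), which is what allows us to push $\vec{x}$ down into $G(\phi_j)+\vec{\alpha}$.
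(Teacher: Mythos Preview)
Your proposal is correct and follows essentially the same approach as the paper. The paper's proof is a two-line remark: $G(\phi_j)+\vec{\alpha}$ is a subset of $P_j+\vec{\alpha}$ that contains all its vertices, hence inherits the touching property; you unpack exactly this, making explicit the point (which the paper leaves implicit from the proof of Lemma~\ref{lem:touching}) that the chosen $\vec{\alpha}$ places $\vec{x}$ at a genuine vertex of $P_j+\vec{\alpha}$.
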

\begin{proof}
Note that the set $G(\phi_j) + \vec{\alpha}$ is a subset of $P_j + \vec{\alpha}$, but on the other hand contains
all its vertices. Thus $G(\phi_j) + \vec{\alpha}$ touches $P_0$ at $\vec{x}$.
\end{proof}

\begin{remark}
This section translates to the min-plus case literally.
\end{remark}

\subsection{A facet of $P_{0}$ is singular} \label{sec:singular_facet}

In this subsection we are going to finish the proof of Theorem~\ref{thm:main1}.

For the sake of convenience we will throughout this subsection call an $(n+1)$ dimensional vector $\vec{\alpha}$ (or a point in $\bb{R}^{n+1}$)
integer if its first $n$ coordinates are integers. 

We will not need the following observation in the proof, but it helps to clarify the intuition.
\begin{proposition} \label{cor.p_0}
Consider the bottom $\beta_{P_0}$ of $P_0$, consider the vector $\{a_I\}_I$ corresponding to it,
that is $a_{I} = \beta_{P_0}(I)$ if $\beta_{P_0}(I)$ is defined, and $a_I = \infty$ otherwise.
Consider the tropical polynomial $g = \ta_{I} \left(a_I \tp \vec{x}^I\right)$.
Then for each $f_j$ the polynomial $g$ lies in a tropical ideal generated by $f_j$.
\end{proposition}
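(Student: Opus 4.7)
The plan is to exhibit, for each fixed $j$, a family of tropical monomials $\{m_I\}$ such that
\[
g \;=\; \bigoplus_I m_I \tp f_j,
\]
which is precisely the statement that $g$ lies in the tropical ideal generated by $f_j$. The key geometric input will be Corollary~\ref{cor:touching}: for every point on the bottom of $P_0$, it furnishes a translate of the graph $G(\phi_j)$ that sits inside $P_0$ and passes through that point.

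Concretely, for each integer exponent $I \in \Dom \beta_{P_0}$ I would apply Corollary~\ref{cor:touching} to the point $\vec{x} = (I,\beta_{P_0}(I))$, obtaining a translation vector $\vec{\alpha}_I = (\vec{J}_I,t_I)$ with $G(\phi_j)+\vec{\alpha}_I \subseteq P_0$ and $\vec{x}\in G(\phi_j)+\vec{\alpha}_I$. The translated graph $G(\phi_j)+\vec{\alpha}_I$ is exactly the graph of $m_I \tp f_j$ for the tropical monomial $m_I = t_I \tp \vec{x}^{\vec{J}_I}$. To verify the identity, I would compare coefficients monomial by monomial. For every exponent $I'$, the containment $G(\phi_j)+\vec{\alpha}_I \subseteq P_0$ gives $t_I + \phi_j(I' - \vec{J}_I) \geq \beta_{P_0}(I') = a_{I'}$, so the tropical sum dominates $g$ coefficient-wise. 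Specialising to $I = I'$, the membership $(I',\beta_{P_0}(I')) \in G(\phi_j)+\vec{\alpha}_{I'}$ forces the exact equality $t_{I'}+\phi_j(I'-\vec{J}_{I'}) = a_{I'}$, yielding the opposite inequality and hence the identity.

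The main obstacle is a bookkeeping check: for $m_I$ to be a valid tropical monomial, the shift $\vec{J}_I$ must be a \emph{nonnegative integer} vector. Integrality comes for free, since $I$ and the vertices of $P_j$ are integer points, and the touching construction makes $\vec{J}_I$ a difference of two such. For nonnegativity I would invoke the Minkowski decomposition $P_0 = P_j + R$ with $R = (n+1)P_j + (n+2)\sum_{i\neq j} P_i$: its first-$n$-coordinate projection lies in the nonnegative orthant, and an inspection of the proof of Lemma~\ref{lem:touching} shows that the translate of $P_j$ it produces can be chosen to come from adding a vector of $R$, making the corresponding $\vec{J}_I$ nonnegative. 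With this check in place, the coefficient-wise verification above completes the proof.
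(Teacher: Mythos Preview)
Your proof is correct and follows essentially the same approach as the paper's: both invoke Corollary~\ref{cor:touching} at each integer point of the bottom of $P_0$ to obtain a monomial multiple of $f_j$, then take the tropical sum of these multiples and verify it equals $g$ via the containment-plus-touching argument. You are simply more explicit than the paper---in particular by flagging and addressing the integrality and nonnegativity of the shift vector $\vec{J}_I$, which the paper glosses over with the phrase ``this translation corresponds to the tropical multiplication of $f_j$ by a monomial.''
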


\begin{proof}
It is easier to give a proof in geometric terms.
For each integer point $\vec{x}$ on the bottom of $P_0$ consider the translation $G(\phi_j) + \vec{\alpha}_{\vec{x}}$
touching $P_0$ at $\vec{x}$ which exists by Corollary~\ref{cor:touching}.
This translation corresponds to the tropical multiplication of $f_j$ by a monomial.
Then it is easy to see that all the integer points on the bottom of $P_0$ lie in the union of $G(\phi_j) + \vec{\alpha}_{\vec{x}}$ over all $\vec{x}$
and on the other hand all other integer points of this union lie in $P_0$.
The union operation corresponds to the minimum operation (tropical addition) for polynomials.
\end{proof}

\begin{lemma} \label{lem:facet}
Suppose the tropical linear system $M_N \tp \vec{y}$ has a solution $\vec{a}$.
\renewcommand*{\theenumi}{\thetheorem(\roman{enumi})}%
  \renewcommand*{\labelenumi}{(\roman{enumi})}%
\begin{enumerate}
\item \label{lem:facet1} For the case of the tropical semiring $\bb{R}$ there is a face of $P_0$ such that some hyperplane containing it provides a root to the tropical system $F$.

\item \label{lem:facet2} For the case of the tropical semiring $\bb{R}_{\infty}$ if there is $\vec{x} \in \bb{Z}^n$ such that $x \in \Dom \beta_{P_0} \cap \Dom \phi_{\vec{a}}$ then there is a face of $P_0$ such that some hyperplane containing it provides a root to the tropical system $F$.
\end{enumerate}
\end{lemma}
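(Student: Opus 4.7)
The plan is to leverage the solution $\vec{a}$ to locate a face $F$ of $P_0$ on its bottom and a supporting hyperplane $H \supseteq F$ whose slope $\vec{x}^H$ is the desired common root. First, I translate the hypothesis. By Lemma~\ref{lem:singular_vs_solution}, $\vec{a}$ solves $M_N \tp \vec{y}$ iff $-\phi_{\vec{a}}$ is singular to every row function $\phi_{\vec{r}}$ of $M_N$. Because $P_0$ was constructed so that every integer point on its bottom is attained by some translated graph $G(\phi_j) + (J,0)$ sitting inside $P_0$ (essentially Proposition~\ref{cor.p_0}, which rests on Corollary~\ref{cor:touching}), the bottom function $\beta_{P_0}$ equals the pointwise minimum $\min_{\vec r} \phi_{\vec r}$ over the relevant rows. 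Setting $t^* := \min_{\vec r} t_{\vec r}$, where $t_{\vec r}$ is the largest shift with $-\phi_{\vec{a}} + t \le \phi_{\vec r}$, one verifies that at each multi-singular point $I$ of the minimizing row $\vec r^*$ the chain $-\phi_{\vec{a}}(I) + t^* \le \beta_{P_0}(I) \le \phi_{\vec r^*}(I) = -\phi_{\vec{a}}(I) + t^*$ collapses to equality, so $-\phi_{\vec a}$ is singular to $\beta_{P_0}$ with a set $S$ of at least two singular integer points. The hypothesis in~\ref{lem:facet2} that $\Dom \beta_{P_0} \cap \Dom \phi_{\vec a}$ contains an integer point is precisely what rules out the vacuous case $S = \emptyset$ over $\bb{R}_\infty$; in~\ref{lem:facet1} over $\bb R$ this is automatic since $\Dom\phi_{\vec a}$ is the full simplex.

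Second, pick a singular point $\vec{x}^* \in S$ and a facet $F$ of $P_0$ on the bottom containing the lifted point $(\vec{x}^*, \beta_{P_0}(\vec{x}^*))$, and let $H = \{(I,z) : z = \langle \vec{x}^H, I\rangle + c_0\}$ be its supporting hyperplane. For each index $j$, Corollary~\ref{cor:touching} supplies $\vec{\alpha}_j$ with $G(\phi_j)+\vec{\alpha}_j$ touching $P_0$ at $(\vec{x}^*, \beta_{P_0}(\vec{x}^*))$; since this translated graph lies inside $P_0$, which lies weakly above $H$, we obtain $\chi_{\vec{x}^H}(I) + c_j \le \phi_j(I)$ for a suitable constant $c_j$, with equality precisely at those integer $I$ whose image in the translated graph lands on $F$. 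Hence $\vec{x}^H$ is a root of $f_j$ as soon as this equality is attained at two or more points.

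The main obstacle is securing those two equality points for every $j$ simultaneously: with a naive choice, $G(\phi_j)+\vec{\alpha}_j$ might meet $F$ only at the single vertex $(\vec{x}^*, \beta_{P_0}(\vec{x}^*))$. The way around this is to exploit the Minkowski decomposition $F = (n+2)(F_1+\cdots+F_k)$ that every face of $P_0$ inherits from the definition of the enveloping polytope: using $|S|\ge 2$ together with the flexibility of the lemma statement, which permits \emph{any} face of $P_0$ together with \emph{any} hyperplane containing it (not just the unique supporting hyperplane of a facet), one can descend to a suitably lower-dimensional face and pick a hyperplane through it that forces each Minkowski summand $F_j$ to be at least one-dimensional. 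That second dimension in each $F_j$ supplies a second vertex of $G(\phi_j)+\vec{\alpha}_j$ on the corresponding face of $P_0$, hence the second equality point, hence a common root. This combinatorial matching of the multi-singularity data against the Minkowski structure is, I expect, the technical heart of the argument.
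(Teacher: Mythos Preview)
Your first two paragraphs set up the right picture, but the third paragraph, where you address the real obstacle, does not contain a workable argument. You propose to use the Minkowski decomposition $F = (n+2)(F_1+\cdots+F_k)$ of a face and to ``descend to a suitably lower-dimensional face and pick a hyperplane through it that forces each Minkowski summand $F_j$ to be at least one-dimensional.'' But nothing in the Minkowski structure prevents some $F_j$ from being a single vertex, no matter which face of $P_0$ or which hyperplane through it you choose; and your only link between the solution $\vec a$ and the face structure is the bare fact $|S|\ge 2$, which is far too weak to force every summand to be positive-dimensional. You yourself flag this step as ``the technical heart'' that you have not actually carried out---and as stated, it cannot be.

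The paper's argument sidesteps this entirely by a different, much cleaner idea: instead of picking an arbitrary point of $\Sing(\phi_{\vec a},\beta_{P_0})$ and then struggling to find a good face, one chooses from the start a singular point $\vec x$ whose minimal containing face $Q_0$ has \emph{maximal} dimension among all such points. Now fix $j$ and let $\psi$ be the row of $M_N$ corresponding to the touching translate $G(\phi_j)+\vec\alpha$ at $\vec x$. Because $\vec a$ solves that row, $|\Sing(\phi_{\vec a},\psi)|\ge 2$; and since $\vec x$ minimises both $\beta_{P_0}-\phi_{\vec a}$ and $\psi-\beta_{P_0}$, every point of $\Sing(\phi_{\vec a},\psi)$ must lie simultaneously in $\Sing(\phi_{\vec a},\beta_{P_0})$ and in $\Sing(\beta_{P_0},\psi)$. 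The touching property then forces the lifted second point to sit on a face of $P_0$ containing $\vec x$; if that face properly contained $Q_0$, the second point would violate the maximality of $\vec x$. Hence both singular points land on $Q_0$, for every $j$, and any supporting hyperplane through $Q_0$ gives the common root. The maximal-dimension trick is the missing idea.

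A smaller point: your first paragraph's derivation of $|S|\ge 2$ via the row $\vec r^*$ minimizing $t_{\vec r}$ is shaky, because $\beta_{P_0}=\min_{\vec r}\phi_{\vec r}$ holds only over those rows whose translated graph fits inside $P_0$, and your globally minimizing $\vec r^*$ need not be among them. The paper does not need $|S|\ge 2$ up front---only $|S|\ge 1$---and produces the second point for each $j$ separately via the mechanism above.
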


\begin{proof}
Consider the functions $\phi_{\vec{a}}$ and $\beta_{P_0}$.
Since in both cases $\bb{R}$ and $\bb{R}_\infty$ there is $\vec{x} \in \Dom \beta_{P_0}   \cap \Dom \phi_{\vec{a}}$, we have that
there is a singularity point in $\Sing(\phi_{\vec{a}},\beta_{P_0})$.
Further proof works for both cases.

For each point $\vec{x} \in \Sing(\phi_{\vec{a}},\beta_{P_0})$ consider the lowest dimension of the faces of $P_0$
to which the point $(\vec{x}, \beta_{P_0}(\vec{x}))$ belongs and further on denote by $\vec{x}$  the point  in $\Sing(\phi_{\vec{a}},\beta_{P_0})$
which maximizes this minimal dimension.
In simple words, we look for a singularity point in the most general position w.r.t. the polytope $P_0$.
Let us denote the minimal dimension face of $P_0$ containing $(\vec{x}, \beta_{P_0}(\vec{x}))$ by $Q_0$.
Below we show that this face satisfies the formulation of the lemma.

Consider some polynomial $f_j$. By Corollary~\ref{cor:touching} there is a vector $\vec{\alpha}$ such that
$G(\phi_j) + \vec{\alpha}$ touches $P_0$ at $(\vec{x}, \beta_{P_0}(\vec{x}))$.
Denote by $\psi$ the function with the graph $G(\phi_j) + \vec{\alpha}$ (note that $\psi$ corresponds to one of the rows of $M_N$ up to an additive constant).
Then, in particular, we have that $\vec{x} \in \Sing(\beta_{P_0}, \psi)$.
Since we also have $\vec{x} \in \Sing(\phi_{\vec{a}},\beta_{P_0})$ clearly we have $\vec{x} \in \Sing(\phi_{\vec{a}}, \psi)$
(indeed, since $\vec{x}$ by Lemma~\ref{lem:singularity} minimizes functions $\psi - \beta_{P_0}$ and $\beta_{P_0} - \phi_{\vec{a}}$, it also minimizes their sum; note that all three functions are defined at the point $\vec{x}$, so all minimums exist).
However, recall that $\vec{a}$ is a solution to the system $M_N\tp \vec{y}$ and $\psi$ corresponds to one of the rows of $M_N$.
Thus $|\Sing(\phi_{\vec{a}}, \psi)| \geq 2$. But any point minimizing $\psi - \phi_{\vec{a}}$ should also minimize $\psi - \beta_{P_0}$ and $\beta_{P_0} - \phi_{\vec{a}}$ (since $\vec{x}$ does),
hence any point in $\Sing(\phi_{\vec{a}}, \psi)$ should be also in both $\Sing(\phi_{\vec{a}},\beta_{P_0})$ and $\Sing(\beta_{P_0}, \psi)$.
In particular, $|\Sing(\beta_{P_0}, \psi)| \geq |\Sing(\phi_{\vec{a}}, \psi)|$ and thus there is another common point
of $G(\phi_j) + \vec{\alpha}$ and the bottom of $P_0$.

Since $G(\phi_j)+ \vec{\alpha}$ touches $P_0$ at $\vec{x}$ we have that any point in $|\Sing(\beta_{P_0}, \psi)|$ lies in a face of $P_0$ incident to $(\vec{x}, \beta_{P_0}(\vec{x}))$.
If it does not lie in the face $Q_0$, then the minimal dimension face containing
this point has a larger dimension than the dimension of $Q_0$ and we get the contradiction with the maximality property of $(\vec{x}, \beta_{P_0}(\vec{x}))$.
Therefore there are at least two common points of $G(\phi_j)+ \vec{\alpha}$ and $Q_0$. Hence any hyperplane $H$
going through $Q_0$ and not intersecting the interior of $P_0$ is singular to the function corresponding to $G(\phi_j) + \vec{\alpha}$ and thus provides a root to $f_j$.
Since the argument above works for all $f_j$ and $Q_0$ does not depend on $f_j$, we get that $H$ is singular to all $f_1, \ldots, f_k$ and thus defines a root to the system $F$.
\end{proof}

\begin{remark}
In the min-plus case 
the formulation of Lemma~\ref{lem:facet} remains the same.
The proof also almost repeats the proof in the tropical case.
We consider a solution $\vec{a}$ to the min-plus Macaulay system, consider a singular point of $\phi_{\vec{a}}$ and $\beta_{P_0}$ with exactly the same maximization property as before.
We consider the translation of arbitrary $G(\phi_j)$ for arbitrary $j$ as before.
And here we have the only difference to the previous proof. The singularity of $\phi_{\vec{a}}$ and $\psi$
gives us that there is a white singular point and a black singular point
(see Remark~\ref{rk:colors}). Note that this might be the same point with two colors.
Next, the same argument shows that the points of both colors are in $Q_0$, which finishes the proof.
\end{remark}

From Lemma~\ref{lem:facet1} Theorem~\ref{thm:main1} follows immediately. We will use Lemma~\ref{lem:facet2} in the proof of Theorem~\ref{thm:main2} later. The same applies to Theorem~\ref{thm.min-plus}. In Section~\ref{sec:trop_vs_min-plus} we give another proof for the min-plus dual Nullstellensatz with somewhat worse parameters deducing it directly from the tropical dual Nullstellensatz.

\subsection{Examples}
\label{sec:examples}

We provide several examples illustrating why the case of $n>1$ in Theorem~\ref{thm:main1} is substantially harder than the case $n=1$.

\paragraph{Stepped pyramid.} In the case $n=1$ it was actually shown in~\cite{Grigoriev12null} that if we consider any solution $\{a_i\}_{i}$ to the infinite Macaulay system
then if we look onto the large enough $i$'s, then in some natural sense they already form a hyperplane solution,
thereby directly providing a root to the polynomial system. This is not the case already for two variables $n=2$.

To illustrate this consider a tropical polynomial $f$ with a graph
\begin{align*}
G(f) = \{ & (0, 0, 0), (0, 1, 0), (0, 2, 0), (0, 3, 0), \\
& (1, 0, 0), (1, 1, -1), (1, 2, -1), (1, 3, 0), \\
& (2, 0, 0), (2, 1, -1), (2, 2, -1), (2, 3, 0), \\
& (3, 0, 0), (3, 1, 0), (3, 2, 0), (3, 3, 0) \}.
\end{align*}
Its convex hull is an upturned square right pyramidal frustum.

Consider the tropical polynomial system consisting of one polynomial $f$.
For this system we will construct a solution which does not become linear no matter how far away we go from the origin.

It is easier to describe the continuous version of the solution. The discrete solution is defined by the integer points of the continuous solution.

Let $S_k = \{(x,y) | 10(k-1) \leq |x|,|y| \leq 10k\}$ for $k = 1, 2, \ldots$.
For each odd $k$ we let the solution $g \colon \bb{R}^2 \to \bb{R}$ to be constant on $S_k$.
For each even $k$ we divide $S_k$ into $4$ regions by lines $y=x$ and $y=-x$. On the region with
$x \geq |y|$ we let $g(x,y) = x +C$, where $C$ will be chosen later. Analogously for $x \leq -|y|$
we let $g(x,y) = -x +C$, for $y \geq |x|$ let $g(x,y) = y + C$ and for $y \leq -|x|$ let $g(x,y) = -y + C$.
We choose constants in these linear and constant functions in such a way that $g$ is continuous on the whole real plane.
It is not hard to see that the graph of $g$ is singular to the convex hull of $G(f)$.

\paragraph{Stripes.} Now we provide an example demonstrating that solutions of the Macaulay system can behave wildly.
Specifically, we describe almost everywhere ``non-continuous'' solution, that is the solution having arbitrary large gaps in the neighboring points.
For this example also $n=2$ variables suffice.

Consider a polynomial $f$ with
\begin{align*}
G(f) = \{ & (0, 0, 0), (0, 1, -1), (0, 2, 0), \\
& (1, 0, 0), (1, 1, -1), (1, 2, 0)\}.
\end{align*}
The shape of the convex hull of this graph is a prism.

Consider a set of points described by the following function $g \colon \bb{Z}^2 \to \bb{R}$:
$$
\psi(x,y) = \begin{cases} y, & \mbox{if } \lfloor x/2\rfloor\mbox{ is even;} \\ -y, & \mbox{if } \lfloor x/2\rfloor \mbox{ is odd.} \end{cases}
$$
It is not hard to see that the graph of $\psi$ is singular to the convex hull of $G(f)$.
Thus the vector $\{\psi(I)\}_{|I|\leq N}$ is a solution to the Macaulay system corresponding to $f$.
On the other hand note that the gaps in the graph of $\psi$ grow with the growth of $y$.

\subsection{Tropical and Min-plus Dual Nullstellensatz over $\bb{R}_{\infty}$} \label{sec:infinite case}

In this section we prove the following more precise version of Theorem~\ref{thm:main2}.
\begin{theorem} \label{thm:infinity}
Over the semiring $\bb{R}_{\infty}$ a system of tropical polynomials $F = \{f_1, \ldots, f_k\}$ of degree at most $d$ and in $n$ variables has a root iff the Macaulay tropical non-homogeneous linear system with the matrix $M_N$ for
$$
N = 2(n+2)^2 k (4d)^{\min(n,k)+2}
$$
has a solution.
\end{theorem}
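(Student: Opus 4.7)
The easy direction (a root of $F$ gives a solution to $M_N \tp (0,\vec{y})$) is the same argument as in Theorem~\ref{thm:main}, so the task is the converse. My plan is to reduce this to the $\mathbb{R}$-case argument (enveloping polytope $P_0 = (n+2)(P_1 + \cdots + P_k)$ together with Lemma~\ref{lem:facet2}) via an induction on $\min(n,k)$. The reason such an induction is forced upon us is precisely what makes the $\mathbb{R}_\infty$ case harder than the $\mathbb{R}$ case: a solution $\vec{y}$ to the Macaulay system can have massive support of $\infty$-entries, so the partial function $\phi_{\vec{a}}$ associated with it may have a domain that misses $\Dom\beta_{P_0}$ entirely, and then the hypothesis of Lemma~\ref{lem:facet2} is not met.

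The base case $\min(n,k)=0$ is trivial. For the inductive step, fix a solution $\vec{y}$ to the non-homogeneous Macaulay system $M_N \tp (0,\vec{y})$; using the constant column we may normalize so that $\phi_{\vec{a}}(\vec{0})$ is finite. Let $S = \Dom\phi_{\vec{a}} \subseteq \{I : |I|\le N\}$. The dichotomy is: either $S \cap \Dom\beta_{P_0}$ contains an integer point, in which case Lemma~\ref{lem:facet2} applied to $P_0$ immediately produces a supporting hyperplane of a face of $P_0$ that is singular to $\phi_j$ for every $j$, delivering a root of $F$ in $\mathbb{R}_\infty^n$; or this intersection is empty, in which case the $\infty$-pattern of $\vec{a}$ is structured.

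In the second case I want to extract from the shape of $S$ one of two reductions. Either (i) some coordinate direction $i \in \{1,\dots,n\}$ is "forbidden" by the support of $\vec{a}$ (the entries $\phi_{\vec{a}}(I)$ with $I_i > 0$ are systematically $\infty$ in a sufficiently large region), in which case any putative root must have $x_i = \infty$, and setting $x_i = \infty$ produces a reduced system $F'$ of $k$ polynomials in $n-1$ variables that still has a solution to its own Macaulay system with parameter $N' \le N/(4d(n+2))$; or (ii) some polynomial $f_j$ is "already satisfied at infinity" by the restriction of $\vec{a}$, in which case removing $f_j$ yields a system of $k-1$ polynomials which is still solvable on a smaller Macaulay matrix. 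The key geometric input is that $P_0$ is a large inflation of a Minkowski sum, so either $S$ hits its bottom or $S$ is confined to a boundary region whose algebraic meaning is exactly one of (i) or (ii). A root of the reduced system lifts back to $F$ by reinstating the $\infty$-coordinate (in case (i)) or trivially (in case (ii)).

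The main obstacle is quantitative bookkeeping. At each reduction step we must pay two multiplicative factors in $N$: one factor $(n+2)$ comes from the enveloping-polytope scaling needed to reapply Lemma~\ref{lem:facet2} in the smaller system, and another factor of order $4d$ comes from the room we need around $S$ to certify that the supposed reduction is genuine (i.e., that the missing region is wide enough to see all the translates of $G(\phi_j)$ that appear as rows of the Macaulay matrix). Iterating this at most $\min(n,k)$ times and multiplying by the $\mathbb{R}$-case overhead $\sim (n+2)kd$ at the bottom of the induction produces the announced bound $N = 2(n+2)^2 k (4d)^{\min(n,k)+2}$, with the extra $+2$ in the exponent absorbing both the initial enveloping step and the constant-column normalization. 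The literal translation of the same induction to the min-plus setting, using the colored version of the singularity notion described in Remark~\ref{rk:colors}, gives Theorem~\ref{thm.min-plus2}.
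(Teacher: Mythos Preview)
Your inductive reduction is not the paper's approach, and its central step has a genuine gap. The paper does \emph{not} shrink $n$ or $k$; instead it replaces $F$ by an auxiliary system $F'$ in the same $n$ variables in which every polynomial has a finite constant term. Concretely, once some $f_1$ has a constant term, for each $i>1$ one forms $f_i' = f_1 \oplus \bigoplus_{j=1}^{n} \bigl((-C)\odot x_j^{\alpha}\odot f_i\bigr)$ (plus perturbed versions $f_{ij}'$), with $C$ and $\alpha$ chosen of size roughly $(4d)^{\min(n,k)}$. Since every polynomial in $F'$ is an algebraic combination of $F$, the same $\vec{a}$ solves the Macaulay system of $F'$; and now $\vec 0 \in \Dom\beta_{P_0'}$, so Lemma~\ref{lem:facet2} applies \emph{once} and gives a root $\vec b\in\bb{R}^n$ of $F'$. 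The technical core is Lemma~\ref{lem:F_vs_Fprime}: from $\vec b$ one builds an increasing chain of coordinate sets $S_0\subseteq\cdots\subseteq S_r$ with $r\le\min(n,k)$ and sets the coordinates outside $S_r$ to $\infty$ to obtain a root of $F$. The exponent $\min(n,k)$ in the bound comes from the length of this chain (via Claim~\ref{cl:construction}), not from recursion on the problem size.

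The gap in your plan is the unproven dichotomy. You assert that if $S=\Dom\phi_{\vec a}$ misses $\Dom\beta_{P_0}$ then either an entire coordinate direction is ``forbidden'' or some $f_j$ is ``already satisfied at infinity'', but nothing in the hypotheses forces this: $S$ contains $\vec 0$ and may otherwise be quite irregular (cf.\ the examples in Section~\ref{sec:examples}), and $\Dom\beta_{P_0}$ missing $\vec 0$ only says that some $f_i$ lacks a constant term, which does not confine $S$ to a coordinate hyperplane nor decouple any single $f_j$. Even if the dichotomy held, case~(i) needs $\phi_{\vec a}(I)=\infty$ for \emph{all} $I$ with $I_i>0$ up to the relevant degree in order for the restriction of $\vec a$ to solve the reduced Macaulay system, not merely ``in a sufficiently large region''. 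Finally, the bookkeeping does not match: paying a factor $(n+2)$ at each of $\min(n,k)$ inductive steps yields $(n+2)^{\min(n,k)}$, while the stated bound has only $(n+2)^2$; this is another sign that the paper's $\min(n,k)$ enters through the chain argument inside Lemma~\ref{lem:F_vs_Fprime} rather than through problem-size induction.
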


\begin{proof}
Suppose we have a system of tropical polynomials $F$ and consider the corresponding
Macaulay tropical non-homogeneous linear system $M_N \tp \vec{y}$. We have already shown one direction: if $F$ has a root then $M_N \tp \vec{y}$ also has a solution.

Suppose in the opposite direction that we have a solution $\vec{a}$ to the non-homogeneous system with the matrix $M_N$.
This means that there is a solution $\vec{a}$ with the finite coordinate corresponding to constant monomial.
If for the enveloping polytope $P_0$ constructed in Subsection~\ref{sec:env_polytope} there is $x \in \bb{Z}^n$ such that $x \in \Dom \beta_{P_0}(\vec{x}) \cap \Dom \phi_{\vec{a}}(\vec{x})$
then we can directly apply Lemma~\ref{lem:facet2}.
But initially we know only that $\phi_{\vec{a}}(\vec{0}) \neq \infty$
and it can be that $\beta_{P_0}(\vec{0}) = \infty$ (and there can be no translation $P_0 + \vec{\alpha}$ of $P_0$ within $\bb{Z}_{+}^n$ such that $\beta_{P_0 + \vec{\alpha}}(\vec{0}) \neq \infty$).
Below we describe how we deal with this problem.

Consider the column of $M_N$ corresponding to the constant monomial.
If it has no finite entry, the non-homogeneous Macaulay system has the infinite solution.
At the same time the system of polynomials also has the infinite solution.
Indeed, note that no polynomial in the system in this case has a finite constant term.
So, this case is simple and further we can assume that the column of $M_N$ corresponding
to the constant monomial has a finite entry.
This means that there is a polynomial in $F$ with a finite constant term.
For simplicity of notation assume that it is $f_1$.

Now based on the system of polynomials $F$ we construct a system of polynomials $F^{\prime}$
such that
\begin{enumerate}
\item each polynomial in $F^\p$ is a (tropical) algebraic combination of polynomials in $F$;
\item each polynomial in $F^\prime$ has a finite constant term;
\item $F^\prime$ has a root iff $F$ also has a root;
\item the number $K$ of polynomials in $F^\p$ is at most  $(n+2)k$ and the maximal degree $d^\p$ of polynomials in $F^\p$ is at most $2 (4d)^{\min(n,k)+2}$.
\end{enumerate}

\begin{claim} \label{cl:F_prime}
If a family of polynomials $F^\p$ with the properties described above exists then Theorem~\ref{thm:infinity} follows.
\end{claim}

\begin{proof}[Proof of the claim]
We only need to show the opposite direction of Theorem~\ref{thm:infinity}. 
Consider the solution $\vec{a}$ to the system $M_N \tp \vec{y}$.
Consider the non-homogeneous Macaulay matrix $M^{\prime}_{N}$ corresponding to $F^{\prime}$. Since all the polynomials in $F^{\prime}$
are tropical algebraic combinations of the polynomials in $F$,
the rows of $M^{\prime}_{N}$ are tropical linear combinations of the rows of $M_N$.
Hence $\vec{a}$ is a solution to $M^{\prime}_{N} \tp \vec{y}$.
Consider the extended Newton polytopes $P_1^\p, P_2^{\prime} \ldots, P_K^{\prime}$ for the polynomials in the system $F^{\prime}$
and consider the enveloping polytope $P_0^{\prime}$ (see Subsection~\ref{sec:env_polytope}).
Note that for each function $f \in F^{\prime}$ we have $\phi_f(\vec{0}) \neq \infty$. Thus the same is true for the corresponding polytopes and
for the enveloping polytope $P_0^\prime$ as well.
Note that we have $N =   2(n+2)^2 k(4d)^{\min(n,k)+2} = (n+2)K  d^\p$.
Therefore Lemma~\ref{lem:facet2} is applicable and
we obtain a root $\vec{b} = (-b_1, \ldots, -b_n) \in \bb{R}^n$ for $F^\prime$.
This in turn implies that $F$ has a root.

\end{proof}

Thus to finish the proof of Theorem~\ref{thm:infinity} it is left to construct the system of polynomials $F^\p$ and ensure its properties.

The idea for the construction of $F^\p$ is to incorporate $f_1$ into all polynomials $f_i$ in $F$ in order to insure that their constant terms are finite.
That is, each polynomial in $F^\p$ will be a (tropical) sum of $f_1$ and an algebraic combination of other polynomials in $F$.
In particular, an extended Newton polytope of a polynomial in $F^\p$ is a convex hull of a union of $P_1$ with some translations of other polytopes among $P_2, \ldots, P_k$.
However, we would like to avoid new roots for the system $F^\p$.
If some face of $P_1$ is a face for all polytopes of the polynomials in $F^\p$ it gives a new root for $F^\p$. 
So we have to construct new polytopes $P^\p$ in such a way that as much as possible of the vertices of $P_1$ lie in the interior of $P^\p$ and thus to reduce the number of faces of $P_1$ in the polytope $P^\p$.
The necessity to avoid new roots in $F^\p$ adds more technical complications. 

We now proceed to the construction of $F^\p$.

Below we will need the following value:
\begin{equation} \label{eq.g}
\Delta = \max_{1 \leq i_1,i_2 \leq k} \max_{\vec{x} \in \Dom \phi_{i_1},\
 \vec{y} \in \Dom \phi_{i_2}}
 | \phi_{i_1}(\vec{x}) - \phi_{i_2}(\vec{y})|.
\end{equation}
Informally, it measures the maximal joint variation of $\phi$-functions for the system $F$.

We also can assume that $\min_{I} \phi_i(I) = 0$ for all $i$
since adding (in the classical sense) a constant to each coefficient of a polynomial does not change singularity.

To construct $F^\prime$ we first for all $i=2, \ldots, k$ and $j = 1, \ldots, n$ define polynomials of the following form:
$$
g_{ij} = (-C) \tp x_j^{\alpha} \tp f_i.
$$
Here the parameters $C$ and $\alpha$ can be fixed in the following way: 
$$
C = 2\Delta (4d)^{2\min(n,k)+2},\ \ \alpha = (4d)^{\min(n,k)+2}.
$$

Next for all $i>1$ we define
\begin{equation} \label{eq:infinity_polynomials}
f_i^{\prime} = f_1 \ta g_{i1} \ta g_{i2} \ta \ldots \ta g_{in}.
\end{equation}
Also for each $i= 2, \ldots, k$ and $j=1,\ldots, n$ we introduce a polynomial
\begin{equation*}
f_{ij}^{\prime} = f_1 \ta g_{i1} \ta g_{i2} \ta \ldots \ta g_{i,j-1} \ta (-1) \tp g_{ij} \ta g_{i,j+1} \ta \ldots \ta g_{in},
\end{equation*}
that is the difference between $f^\prime_i$ and $f_{ij}^\prime$ is that in the latter the coefficient of $x_j^{\alpha} \tp f_i$ is $-C-1$ instead of $-C$.
We let
$$
F^{\prime} = \{f_1\} \cup \{f_{i}^{\prime} \mid i= 2, \ldots, k\} \cup \{f_{ij}^{\prime} \mid i= 2, \ldots, k,\ j=1,\ldots, n\}.
$$
Overall, $F^{\prime}$ consists of $K = (n+1)(k-1) + 1 \leq (n+2)k$ polynomials of degree at most $\alpha + d \leq 2 (4d)^{\min(n,k)+2}$.
For a function $f^\p_i$ we denote its extended Newton polytope by $P^\p_i$ and the function $\phi_{f^\p_i}$ by $\phi^\p_i$.
Analogously, for a function $f^\p_{ij}$ we denote its extended Newton polytope by $P^\p_{ij}$ and the function $\phi_{f^\p_{ij}}$ by $\phi^\p_{ij}$.

The tropical summands of the sum~\eqref{eq:infinity_polynomials} will be called below the \emph{components} of the polynomial $f_i^\prime$.
We specifically distinguish $f_1$-component. All other components are called $f_i$-components.
When we need to distinguish them, the component $g_{ij}$ will be called the $j$-th component of $f^\p_{i}$

All properties of $F^\p$ are clear from the construction except the property $3$.
Moreover, one direction of the property $3$ is simple: since $F^\prime$ consists of algebraic combinations of polynomials of $F$,
any root for $F$ is also a root for $F^\prime$.
Thus it is left to show the following lemma.

\begin{lemma} \label{lem:F_vs_Fprime}
If there is a root to the system $F^\prime$ then there is a root to the system $F$.
\end{lemma}

The proof of this lemma has a geometric intuition, but it is not easy to see the intuition behind the technical details.
So, before proceeding with the proof we would like to explain this intuition in the case of $n=2$ and $k=3$.
After that we provide a formal proof for the general case.

\paragraph{Informal proof for $n=2$ and $k=3$.}

Informally it is convenient to think of constants $C$ and $\alpha$ as of very large parameters to be fixed later. In the formal proof of Lemma~\ref{lem:F_vs_Fprime} we will show that the values of $C$ and $\alpha$ specified above suffice. 

It is instructive to look at the extended Newton polytope $P_2^\p$ corresponding to the function $f^\p_2$. It can be obtained in two steps: we first take a union of the graph of $\phi_1$
and of two copies of the graph of $\phi_2$ translated far away along each of the axes $x_1$ and $x_2$ and far below along the vertical axis and then take an extended Newton polytope of the result.
The idea behind the construction of $f_2^\prime$ is that all the points of the polytope $P_1$ (corresponding to $\phi_1$) except possibly the points on $x_1$-axis and $x_2$-axis are in the interior of the polytope $P_1^{\p}$.

We will explain the presence of the polynomials $f_{21}^\prime, f_{22}^\prime, f_{31}^\prime, f_{32}^\prime$ in $F^\prime$ once we actually need them.

Assume that there is a root $\vec{b} = (- b_1, - b_2)$ to the system $F^\prime$.
Note that since each polynomial in $F^\p$ has finite constant monomial, the value of all polynomials in $F^\p$ is finite on any input. So we can assume that $\vec{b} \in \bb{R}^2$. If it is not the case, just substitute infinite coordinates of $\vec{b}$ by large enough finite numbers. 
Recall, that the root corresponds to the plane $\chi_{\vec{b}}(\vec{x}) = b_1 x_1 + b_2 x_2$ (in $3$-dimensional space)
singular to $\phi_f$ for all polynomials $f \in F^\prime$. 
The first attempt would be to deduce that this hyperplane is also singular to functions $\phi_1, \phi_2, \phi_3$,
corresponding to polynomials $f_1, f_2, f_3$.
We already know that it is singular to $\phi_1$ since $f_1 \in F^\prime$. To show that it is singular to $\phi_2$ and $\phi_3$
we look closer at polynomials $f_2^\prime$ and $f_3^\prime$. Without loss of generality let us consider $f_2^\prime$.

We know that the hyperplane $\chi_{\vec{b}}$ has at least two singular points with $\phi_2^\prime$.
However, if two singular points belong to different components of $f_2^\prime$ it does not give us anything about the singularity of $\chi_{\vec{b}}$ to any $\phi_i$.
Thus, we would like to show that there are two singular points in one of the components of $f_2^\prime$.
Suppose that there is at most one singular point in each component of $f^\p_2$.
We note that if there is at least one singular point in $f_1$-component, then there are two singular points there, since
the hyperplane $\chi_{\vec{b}}$ is singular to $\phi_1$. The case when the hyperplane $\chi_{\vec{b}}$ has only one singular point in one of $f_2$-components
is precisely the case, where we need polynomials $f_{21}^\prime, f_{22}^\prime$.
Indeed, it is not hard to see that in this case one of these polynomials has only one singular point overall, and thus the hyperplane is not singular to $\phi^\p_{21}$ or $\phi^\p_{22}$.

Thus we have that each of the polynomials $f_2^\prime$ and $f_3^\prime$ has at least two singular points in the same component.
If these are $f_2$-component and $f_3$-component respectively then we are done: clearly, the hyperplane is singular to both $\phi_2$ and $\phi_3$.
Thus it is left to consider the case when one of the polynomials (or both) has two singular points in $f_1$-component.

Here we encounter a serious obstacle. 
For example, assume that $\Dom \phi_2$ and $\Dom \phi_3$ do not intersect $\{(t,0)\mid t\in\bb{R}\}$ the set of points on $x_1$-axis. 
Then the hyperplane having two singular points with $\phi_1$ on the $x_1$-axis and decreasing dramatically along the $x_2$-axis
provides a root to $F^\prime$, but not necessarily to $F$.

Thus it is not always true that a root of $F^\prime$ constitutes a root to $F$.
However, in the example described above we can replace $b_2$ by $b_2^\p = \infty$ to obtain a root $\vec{b}^\p$ for $F$.

It turns out that this trick with some additional work is enough to finish the proof.
Indeed, suppose that singular points of the hyperplane $\chi_{\vec{b}}$ and, say, $\phi_2^\prime$ are
in $f_1$-component. Then it is not hard to see that all these singular points lie either on $x_1$-axis, or on $x_2$-axis.
Indeed, for any point $\vec{a} \in \Dom \phi_2^\p$ with both positive coordinates the point $(\vec{a}, \phi_2^\p(\vec{a})) \in \bb{R}^3$ lies in $P_1$ and thus is in the interior of $P_2^\prime$ by the construction of $\phi_2^\p$.
Thus $\vec{a}$ is not a singular point.
If on the other hand, there is a singular point $\vec{a}$ with positive $x_1$-coordinate and another singular point $\vec{a}^\p$ with positive $x_2$-coordinate,
then the point $((\vec{a} + \vec{a^\p})/2,(\phi_1(\vec{a}) + \phi_1(\vec{a^\p}))/2)$ lie in $P_1$ due to its convexity and thus lie in the interior of $P_2^\p$, which contradicts to the singularity of $\vec{a}$ and $\vec{a}^\p$.
Thus this case is also impossible and all singular points lie on one of the axes.

Without loss of generality assume that all the singular points for $f_{2}^\prime$ lie on the $x_1$-axis. Since there are at least two singular points on this axis in $f_1$-component
we have that $b_1$ is not too large and not too small, or more formally, it is bounded in absolute value by $\Delta$ (and thus does not depend on $C$ and $\alpha$).
Since we are allowed to fix $C$ as large as we want, this in particular means that $\Dom(\phi_2)$ does not intersect $x_1$-axis.
Otherwise the singular point of the hyperplane $\chi_{\vec{b}}$ with $\phi_2^\p$ would be in $1$-component and not in $f_1$-component.
Thus to obtain a root of the system $\{f_1, f_2\}$ we can just let $b_2^\p= \infty$ to obtain a new potential root $\vec{b}^\p = (-b_1, -b_2^\p)$.

We would like to stress here that at this point we have shown the theorem for the case $k=2$.
However we need one more observation for the case $k=3$.

Consider the other polynomial $f_{3}^\prime$.
If the domain of $\phi_3$ also does not intersect the $x_1$-axis, then $\vec{b}^\p$ is indeed a root of $f_3^\p$.

Thus we can assume that there is a point $\vec{y}$ in $\Dom(\phi_3)$ on the $x_1$-axis.
Then just like in the case of $\phi_2^\p$ the singular points of $\phi_3^\p$ are not in $f_1$-component.
Thus they are in some $f_3$-components and thus $\chi_{\vec{b}}$ is singular to $\phi_3$ itself.
But we have set $b_2^\p = \infty$ and the singularity might not translate to $\vec{b}^\p$.
This happens if there is only one singular point in $\Sing(\chi_{\vec{b}}, \phi_3)$ on $x_1$-axis. 
So there is another singular point $\vec{z}$ in $\Sing(\chi_{\vec{b}}, \phi_3)$ not on the $x_1$-axis.

Consider both points $\vec{y}, \vec{z} \in \bb{Z}^2$ on the two-dimensional grid. To get from $\vec{y}$ to $\vec{z}$
in this grid we have to make several (at most $d$) steps along $x_1$-axis in positive or negative direction and at least one step in positive direction along $x_2$-axis.
During this path the value of $\phi_3$ and thus of $\chi_{\vec{b}}$ cannot decrease by more than $\Delta$. Indeed,
since $\vec{z}$ is a singular point the difference of the values of $\chi_{\vec{b}}$
is bounded from below by the difference of the value of $\phi_3$ at the same points. 
Since the value of $b_1$ is also bounded in absolute value,
from this we can deduce that $b_2$ is bounded from below by some value depending only on $F$.

Now choosing $C$ large enough we can get a contradiction with the assumption that the singular points of $\phi_{f_2^\prime}$
are in $f_1$-component: both $b_1$ and $b_2$ are not two small and if we place $f_2$-components low enough the singular point will be
in one of these components.

This proof (with some additional technical tricks) can be extended to the general case.

Next we proceed to the formal proof of Lemma~\ref{lem:F_vs_Fprime}.

\paragraph{Proof of Lemma~\ref{lem:F_vs_Fprime}.}
The plan is to consider a root of $F^{\prime}$ and replace some of its coordinates by infinity.
Below we describe how to choose the appropriate set of these coordinates. The construction is rather straightforward:
we only keep the coordinates which we have a reason to keep and the others replace by infinity.

Consider a root $ \vec{b} = (- b_1, \ldots, - b_n) \in \bb{R}^{n}$ of $F^\prime$.
As discussed in Section~\ref{sec:null_prelim} this means that the hyperplane $\chi_{\vec{b}}(\vec{x}) = \sum_i b_i x_i$
is singular to all $\phi_f$ for $f \in F^\prime$.

Note that for each polynomial $f_i^{\prime}$ there are two singularity points in the same component.
Indeed, if this is not the case consider a $j$-component with one singularity point and consider the polynomial $f^\prime_{ij}$.
It has only one singularity point which is a contradiction (the same arguments works for $f_1$-component: we should consider the polynomial $f_1$ in this case).

Below for a set $T \subseteq \bb{R}^n$ and for a set $S \subseteq \{1,\ldots, n\}$ we denote by $\restr{T}{S}$ the set of points $\vec{x} \in T$
such that $x_j = 0$ for all $j \notin S$.

We define the sequence of sets of coordinates in the following iterative way.
First consider the set $\Sing(\chi_{\vec{b}}, \phi_1)$ of singularity points for $\chi_{\vec{b}}$ and $\phi_1$.
We let $j \in S_0$ iff there is $\vec{x} \in \Sing(\chi_{\vec{b}}, \phi_1)$ such that $x_j \neq 0$.
Suppose we have defined $S_l$ by recursion on $l\ge 0$. 
If there is a polynomial $f_i \in F$ such that $\left| \Sing(\chi_{\vec{b}},\phi_i) \right| > \left| \restr{\Sing(\chi_{\vec{b}},\phi_i)}{S_l}\right|$ and
$\restr{\Dom(\phi_i)}{S_l} \neq \emptyset$
then we define $S_{l+1}$ letting $S_l \subseteq S_{l+1}$ and $j \in S_{l+1} \setminus S_{l}$ iff there is $\vec{x} \in \Sing(\chi_{\vec{b}},\phi_i)$ such that $x_j \neq 0$.
If there is no such $f_i$ the process stops.

This procedure results in a sequence $S_0, S_1, \ldots S_r$ and in the corresponding sequence of polynomials
$g_{1}, g_{2}, \ldots, g_{r}$, where for each $l$ we have $g_l \in F$. For the sake of convenience denote $g_{0} = f_1$.
Note that $r \leq k$, since each polynomial from $F$ can appear in the sequence at most once.
Also $r \leq n$, since each $S_l$ is a subset of $\{1,\ldots, n\}$ and each next set is larger than the previous one.
Thus $r \leq \min(n,k)$.

We can pose the following bounds on the coordinates of $\vec{b}$ in $S_r$.
\begin{claim} \label{cl:construction}
For all $l = 0, \ldots, r$ if there is $j \in S_l$ such that $b_{j} \leq  - 2 \Delta (4d)^{l}$ then there is $j^\prime \in S_l$ such that $b_{j^\prime} \geq |b_j|/ (4d)^{l+1}$.
\end{claim}

Informally, if there is a very small $b_j$, then there is rather large $b_{j^\prime}$.

\begin{proof}
We argue by induction on $l$.

For the case of $S_0$ consider the coordinate $j$ with $b_j \leq -2\Delta$ and consider $\vec{x} \in \Sing(\chi_{\vec{b}}, \phi_1)$ such that $x_j \neq 0$ (there is such an $\vec{x}$ by the definition of $S_0$).
Consider $\chi_{\vec{b}}(\vec{x}) - \chi_{\vec{b}}(\vec{0}) = \sum_p x_p b_p$.
Since $\vec{x} \in \Sing(\chi_{\vec{b}},\phi_1)$ by Lemma~\ref{lem:singularity}
$\chi_{\vec{b}}(\vec{x}) - \chi_{\vec{b}}(\vec{0}) \geq \phi_1(\vec{x}) - \phi_1(\vec{0}) \geq -\Delta$.
Note that $x_j > 0$ (and thus $x_j \geq 1$) and for all $p$ we have $x_p \geq 0$, so
$$
\sum_{p \neq j} x_p b_p \geq -x_j b_j - \Delta \geq - b_j - \Delta \geq - b_j /2.
$$
On the other hand note that
$$
\sum_{p \neq j} x_p b_p \leq \max_{p \neq j} b_p \sum_{p \neq j} x_p \leq d \max_{p \neq j} b_p.
$$
Thus there is $j^\prime$ such that $b_{j^\prime} =  \max_{p \neq j} b_p \geq - b_j/2d$.

For the induction step consider $j \in S_l$ such that $b_{j} \leq  - 2 \Delta (4d)^{l}$.
If $j \in S_{l-1}$, we are done by induction hypothesis.
Suppose $j \notin S_{l-1}$.
Consider the polynomial $g_l$.
By its definition we have the following
\begin{enumerate}
\item There is $\vec{y} \in \restr{\Dom(\phi_{g_l})}{S_{l-1}}$. In particular, $y_j = 0$.
\item There is a singular point $\vec{x} \in \Sing(\chi_{\vec{b}},\phi_{g_l})$ such that $x_j \neq 0$.
\end{enumerate}

Consider $\chi_{\vec{b}}(\vec{x}) - \chi_{\vec{b}}(\vec{y}) = \sum_p (x_p - y_p) b_p$.
Due to the singularity of $\vec{x}$ by Lemma~\ref{lem:singularity} we have $\chi_{\vec{b}}(\vec{x}) - \chi_{\vec{b}}(\vec{y}) \geq \phi_{g_l}(\vec{x}) - \phi_{g_l}(\vec{y}) \geq -\Delta$.
Just like in the base of induction we have
$$
\sum_{p \neq j} (x_p - y_p) b_p \geq -(x_j - y_j) b_j - \Delta \geq - b_j - \Delta \geq - b_j /2.
$$
Let us partition the leftmost sum into two parts
$$
\sum_{p \neq j, p \in S_{l-1}} (x_p - y_p) b_p + \sum_{p \neq j, p \notin S_{l-1} } (x_p - y_p) b_p \geq - b_j /2.
$$
Note that in the second sum $(x_p - y_p)$ is nonnegative, because $y_p = 0$.
Since
$$
\sum_{p \neq j} |x_p - y_p| \leq 2d
$$
there is either $p \notin S_{l-1}$ such that $b_p \geq - b_j/4d$, or $p \in S_{l-1}$ such that $|b_p| \geq - b_j/4d$.
In the first case we are done immediately and in the second case we are done by induction hypothesis.
\end{proof}

To obtain the new root we fix all the coordinates of the root not in $S_r$ to $\infty$, that is we let $b_j^{\prime} = -\infty$ if $j \notin S_r$ and $b_j^\p = b_j$ if $j \in S_r$. 
We denote the result by $\vec{b}^\p=(-b_1^\p, \ldots, -b_n^\p) \in \bb{R}^n_{\infty}$.

We claim that $\vec{b}^\prime$ is a root of $F$.

Indeed, suppose there is a polynomial $f_i \in F$ such that there is only one $\vec{z} \in \Sing(\chi_{\vec{b}^{\prime}}, \phi_i)$.
Clearly, $\vec{z} \in \restr{\bb{R}^n}{S_r}$.
Moreover, no other point can be a singular point of the original hyperplane $\chi_{\vec{b}}$ with $\phi_i$.
Indeed, other singular points can be only outside of $\restr{\bb{R}^n}{S_r}$ and if there is at least one,
then following our construction we would have added some more coordinates to $S_r$.
Thus there is only one singular point in $\Sing(\chi_{\vec{b}}, \phi_{i})$ and as a result singular points of $\chi_{\vec{b}}$ with $\phi_{f_i^\prime}$
are in $f_1$-component.
Let $\vec{y} \in \Sing(\chi_{\vec{b}}, \phi_{f_i^\prime})$ be one of these singular points. Note that by the definition of $S_0$ we have $\vec{y} \in \restr{\bb{R}^n}{S_0} \subseteq \restr{\bb{R}^n}{S_r}$.
We are going to get a contradiction.

Let $\Min = \min_{j\in S_r} b_j$ and $\Max = \max_{j \in S_r} b_j$. Consider $j$ and $j^\prime$ such that $b_j = \Min$ and $b_{j^\prime} = \Max$.
Consider $j^\prime$-component of $f_i^\prime$ and let $\vec{x}$ be the translation of $\vec{z}$ in this component,
that is $\vec{x} = \vec{z} + \alpha \cdot \vec{e_{j^\prime}}$.
Since $\vec{z} \in \restr{\bb{R}^n}{S_r}$ and $j^\prime \in S_r$ we have $\vec{x} \in \restr{\bb{R}^n}{S_r}$.

Our goal is to show that $\chi_{\vec{b}}(\vec{x}) - \chi_{\vec{b}}(\vec{y}) > \phi_{f_i^\prime}(\vec{x}) - \phi_{f_i^\prime}(\vec{y})$ which will contradict
Lemma~\ref{lem:singularity} since $\vec{y} \in \Sing(\chi_{\vec{b}}, \phi_{f_i^\prime})$.

Note that
$$
\phi_{f_i^\prime}(\vec{x}) - \phi_{f_i^\prime}(\vec{y}) = \phi_{f_i^\prime}(\vec{z} + \alpha \cdot \vec{e_{j^\prime}}) - \phi_1(\vec{y}) =
(\phi_i(\vec{z}) - C) - \phi_1(\vec{y})
 \leq  - C + \Delta,
$$
where the second equality follows from the definition of $f_{i}^\prime$~\eqref{eq:infinity_polynomials} and the last inequality follows from the definition of $\Delta$~\eqref{eq.g}.
Thus it is enough to show that $\chi_{\vec{b}}(\vec{x}) - \chi_{\vec{b}}(\vec{y}) > -C +\Delta$.

Note now that
$$
x_{j^\prime} - y_{j^\prime} = x_{j^\prime} - z_{j^\prime} + z_{j^\prime} - y_{j^\prime} \geq \alpha - 2d
$$
and
$$
 \sum_{p \neq j^\prime} \left| x_p - y_p \right|  = \sum_{p \neq j^\prime} \left| z_p - y_p \right| \leq 2d.
$$

Consider the sum
\begin{align*}
& \chi_{\vec{b}}(\vec{x}) - \chi_{\vec{b}}(\vec{y}) = &\\
&  \sum_{p \neq j^\prime, x_p - y_p> 0} (x_p - y_p) b_p + \sum_{p \neq j^\prime, x_p - y_p \leq 0} (x_p - y_p) b_p + (x_{j^\prime} - y_{j^\prime}) b_{j^\prime} \geq &\\
& \Min \cdot \sum_{p \neq j^\prime, x_p - y_p> 0} (x_p - y_p) +  \Max \cdot \sum_{p \neq j^\prime, x_p - y_p \leq 0} (x_p - y_p) + \Max \cdot (x_{j^\prime} - y_{j^\prime}).&
\end{align*}
If $\Max \leq 0$, then by Claim~\ref{cl:construction} we have $\Min \geq - 2\Delta (4d)^r$ and thus $\Max \geq - 2\Delta (4d)^r$. We have
\begin{align*}
& \chi_{\vec{b}}(\vec{x}) - \chi_{\vec{b}}(\vec{y}) \geq &\\
& - 2d 2\Delta (4d)^r + 0 - 2\Delta (4d)^r (\alpha - 2d) = - \alpha 2\Delta (4d)^r.&
\end{align*}
If $\Max \geq 0$ we have
\begin{align*}
& \chi_{\vec{b}}(\vec{x}) - \chi_{\vec{b}}(\vec{y}) \geq &\\
& \Min \cdot \sum_{p \neq j^\prime, x_p - y_p> 0} (x_p - y_p) -  \Max \cdot 2d + \Max \cdot (\alpha - 2d) =&\\
& \Min \cdot \sum_{p \neq j^\prime, x_p - y_p> 0} (x_p - y_p) +  \Max \cdot (\alpha - 4d) .&
\end{align*}
If $\Min \geq - 2\Delta (4d)^r$ this sum is greater than $-2\Delta 2d (4d)^r$.

If $\Min \leq- 2\Delta (4d)^r$ then by Claim~\ref{cl:construction} $\Max \geq - \Min/(4d)^{r+1}$ and we have
\begin{align*}
& \chi_{\vec{b}}(\vec{x}) - \chi_{\vec{b}}(\vec{y}) \geq &\\
& 2d \Min  -  (\alpha - 4d) \Min/(4d)^{r+1} \geq 0.&
\end{align*}
In all these cases $\chi_{\vec{b}}(\vec{x}) - \chi_{\vec{b}}(\vec{y})$ is greater than $-C + \Delta$ and we have a contradiction with the singularity of $\vec{y}$.

This finishes the proof of Lemma~\ref{lem:F_vs_Fprime}, Theorem~\ref{thm:infinity} and thus Theorem~\ref{thm:main2}.

\end{proof}

\begin{remark}
For the min-plus case the construction of the system $F^\p$ and its properties translate with obvious changes only (a min-plus polynomial is a pair of tropical polynomials; there is a pair of matrices instead of one Macaulay matrix). The proof of Claim~\ref{cl:F_prime} also translates with obvious changes only. We do not attempt to translate informal proof since this does not add to the intuition. The formal proof is given in terms of $\phi_i$ and $\phi_i^\p$ function, so it also translates easily. The only difference is that instead of property ``to have only one singular point'' we use the property to ``to have singular points of only one color'' (see Remark~\ref{rk:colors}). The rest of the proof remains the same.
In Section~\ref{sec:trop_vs_min-plus} we give another proof for the min-plus dual Nullstellensatz with somewhat worse parameters deducing it directly from the tropical dual Nullstellensatz.
\end{remark}

\subsection{Lower Bounds} \label{sec:lower_bounds}

In this subsection we provide examples showing that our bounds on $N$ in Theorem~\ref{thm:main} are not far from being optimal.
At the same time we provide the similar lower bounds for Theorem~\ref{thm.min-plus}. We will translate these lower bounds to Theorems~\ref{thm.min-plus-primary} and~\ref{thm.tropical-primary} in Section~\ref{sec:primary_nullstellensatz}. 

\paragraph{Lower bound for Theorem~\ref{thm:main1}}
First we show a lower bound for the case of $\bb{R}$.
Namely for any $d\geq 2$ we provide a family $F$ of $n+1$ polynomials in $n$ variables and of degree at most $d$ such that $F$ has no root,
but the corresponding Macaulay system $M_{(d-1)(n-1)}\tp \vec{y}$ has a solution.

The construction is an adaptation of the known example for a lower bound for classical Nullstellensatz
due to Lazard, Mora and Philippon (unpublished, see~\cite{Brownawell87,GrigorievV01}).

Consider the following set $F$ of tropical polynomials
\begin{align*}
f_1 &= 0 \ta 0 \tp x_1,\\
f_{i+1} &= 0 \tp x_i^{\tp d} \ta 0 \tp x_{i+1},\ 1\leq i \leq n-1\\
f_{n+1} &= 0 \ta 1 \tp x_n.
\end{align*}
It is not hard to see that this system has no roots.
Indeed, if there is a root, then from $f_1$ we get that $x_1 = 0$,
then from $f_2$ we get that $x_2=0$ etc., finally from $f_n$ we conclude that $x_n=0$.
However from $f_{n+1}$ we have that $x_{n}=-1$ which is a contradiction.

Thus it remains to show that the Macaulay tropical system with the matrix $M_{(d-1)(n-1)}$ corresponding to the system $F$ has a solution.

Recall that the columns of $M_{(d-1)(n-1)}$ correspond to monomials.
We associate an undirected graph $G$ to the matrix $M_{(d-1)(n-1)}$ in a natural way.
The vertices of $G$ are monomials in variables $x_1, \ldots, x_n$ of degree at most $(d-1)(n-1)$
(or, which is the same, the columns of $M_{(d-1)(n-1)}$). We connect two monomials by an edge if they occur
in the same polynomial of the form $\vec{x}^I \tp f_{i}$ for $i \in\{1, \ldots, n\}$. Or, to state it the other way,
we connect two monomials if there is a row of $M_{(d-1)(n-1)}$ not corresponding to a polynomial $f_{n+1}$
and such that the entries in the columns corresponding to these monomials are both finite in this row.

We assign the weight $w(m)$ to a tropical monomial $m$ in the following way.
First, $w(x_i) = d^{i-1}$ for all $i = 1, \ldots, n$. Second, for all monomials $m_1$ and $m_2$ we let $w(m_1 \tp m_2) = w(m_1) + w(m_2)$.
That is, if $m = x_{1}^{\tp a_1} \tp \ldots \tp x_{n}^{\tp a_n}$ then $w(m) = a_1 + a_2 d + \ldots a_n d^{n-1}$.

It turns out that the following lemma holds.

\begin{lemma} \label{lem:connectivity}
If for two monomials $m_1$ and $m_2$ we have $w(m_1) \geq kd^{n-1}$ and $w(m_2) < kd^{n-1}$ for some integer $k$,
then $m_1$ and $m_2$ are not connected in $G$.
\end{lemma}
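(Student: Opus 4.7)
The plan is to control the weight $w$ along edges of $G$ and show that crossing the threshold $kd^{n-1}$ would force a monomial of weight $kd^{n-1}-1$ whose degree is too large to fit in $M_{(d-1)(n-1)}$.

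First I would examine each edge type. A row $\vec{x}^J \tp f_1$ connects the monomials $\vec{x}^J$ and $\vec{x}^{J+e_1}$, whose weights differ by exactly $w(x_1)=1$. A row $\vec{x}^J \tp f_{i+1}$ for $1 \leq i \leq n-1$ connects $\vec{x}^{J+d e_i}$ and $\vec{x}^{J+e_{i+1}}$; crucially these share the same weight since $w(x_i^{\tp d}) = d\cdot d^{i-1} = d^i = w(x_{i+1})$. This is precisely what motivates the choice of weights. Hence along any path in $G$ the weight changes by $\pm 1$ on $f_1$-edges and by $0$ on all other edges.

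Next, assume for contradiction that $m_1$ and $m_2$ are connected. The case $k \leq 0$ is vacuous since $w(m_2) < kd^{n-1} \leq 0$ is impossible, so assume $k \geq 1$. On a path from $m_2$ to $m_1$ the weight must at some step cross $kd^{n-1}$; by the previous paragraph this crossing must occur on an $f_1$-edge, whose two endpoints have weights exactly $kd^{n-1}-1$ and $kd^{n-1}$. In particular, the lower endpoint is a monomial $\vec{x}^J$ of weight $kd^{n-1}-1$, and since both $\vec{x}^J$ and $\vec{x}^{J+e_1}$ must be columns of $M_{(d-1)(n-1)}$, we have $|J| \leq (d-1)(n-1)-1$.

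The final step is to show no such $\vec{x}^J$ exists, which is a base-$d$ calculation. Write $\vec{x}^J = x_1^{\tp a_1} \tp \cdots \tp x_n^{\tp a_n}$. Since $a_n d^{n-1} \leq w(\vec{x}^J) < kd^{n-1}$, we have $a_n \leq k-1$. The substitution $(a_j, a_{j+1}) \mapsto (a_j - d,\, a_{j+1}+1)$ for any $j<n$ with $a_j \geq d$ preserves weight while strictly decreasing degree by $d-1$, so at minimum degree we may assume $a_j < d$ for all $j<n$. Then $\sum_{j<n} a_j d^{j-1} \leq d^{n-1}-1$, which combined with weight $kd^{n-1}-1$ forces $a_n = k-1$ and $\sum_{j<n} a_j d^{j-1} = d^{n-1}-1$, whose unique solution with $a_j < d$ is $a_j = d-1$ for all $j<n$. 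Thus $|J| \geq (k-1) + (n-1)(d-1) \geq (n-1)(d-1)$, contradicting $|J| \leq (n-1)(d-1)-1$.

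The main obstacle is not the calculation itself but recognizing the right weight function: the weights $d^{i-1}$ are chosen precisely so that $f_{i+1}$-edges are weight-preserving, leaving only $f_1$ to shift weight one step at a time, and the threshold $kd^{n-1}$ is aligned so that the unique minimum-degree preimage of weight $kd^{n-1}-1$ is the ``all $(d-1)$'' monomial whose degree just exceeds the Macaulay bound.
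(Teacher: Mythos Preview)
Your proof is correct and follows essentially the same approach as the paper: both analyze how the weight changes along edges (by $0$ on $f_{i+1}$-edges, by $\pm 1$ on $f_1$-edges), both reduce to showing that a monomial of weight $kd^{n-1}-1$ has degree at least $(k-1)+(n-1)(d-1)$ via the same base-$d$ reduction, and both conclude that the crossing $f_1$-edge cannot exist in $M_{(d-1)(n-1)}$. Your version is slightly more streamlined in isolating the constraint $|J|\leq (d-1)(n-1)-1$ on the lower endpoint of the crossing edge, but the argument is the same.
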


\begin{proof}

Note that if two monomials are connected by an edge corresponding to one of the polynomials $f_2, \ldots, f_n$,
then their weights coincide. If they are connected by an edge corresponding to $f_1$, then their weights differ by $1$.

Note that for arbitrary $k$ any monomial of weight $kd^{n-1} - 1$ has the degree at least $(k - 1) + (d-1)(n-1)$.
Indeed, consider such a monomial $m = x_{1}^{\tp a_1} \tp \ldots \tp x_{n}^{\tp a_n}$ of the minimal degree.
If there is $i = 1, \ldots, n-1$ such that $a_i \geq d$, then we can replace $a_i$ by $a_i - d$ and $a_{i+1}$ by $a_{i+1} + 1$
and obtain another monomial of the same weight but with a smaller degree.
Thus $(a_1, \ldots, a_{n-1})$ corresponds to $d$-ary representation of the residue of $kd^{n-1}-1$ modulo $d^{n-1}$.
So for all $i=1, \ldots, n-1$ we have $a_i = d-1$ and thus $a_n = k-1$.

Due to the restriction on the degree in the graph $G$ there is only one monomial of weight $d^{n-1} - 1$
and no monomials of weight $kd^{n-1} - 1$ for $k>1$. Moreover, the unique monomial of weight $d^{n-1} -1$ has the maximal degree $(d-1)(n-1)$
and thus is not connected to a monomial of a higher weight by an edge.

From all this the lemma follows. Indeed, if monomials $m_1$ and $m_2$ are connected, then on the path between them there is
an edge connecting monomials of weights $kd^{n-1}-1$ and $kd^{n-1}$. However, as we have shown, this is impossible.

\end{proof}

Now we are ready to provide a solution to the Macaulay system $M_{(d-1)(n-1)}\tp \vec{y}$. For a monomial of weight
$kd^{n-1} + s$, where $s < d^{n-1}$, set the corresponding variable of $\vec{y}$ to $k$.
Note that due to Lemma~\ref{lem:connectivity} if two monomials are connected, then the values of the corresponding variables are the same.
Thus the constructed $\vec{y}$ satisfies equations of $M_{(d-1)(n-1)}\tp \vec{y}$ corresponding to polynomials $f_1, \ldots, f_n$.
The rows corresponding to $f_{n+1}$ are satisfied since the weights of monomials differ by precisely $d^{n-1}$.

\begin{remark}
For the case of min-plus polynomials a straightforward adaptation works.
Indeed, since in each polynomial there are only two monomials,
the only way to satisfy them is to make their values equal.
Thus it is enough to consider a system of min-plus polynomials $F$
\begin{align*}
&(0,  0 \tp x_1),\\
&(0 \tp x_i^{\tp d},  0 \tp x_{i+1}),\ 1\leq i \leq n-1,\\
&(0,  1 \tp x_n).
\end{align*}
\end{remark}

\paragraph{Lower bound for Theorem~\ref{thm:main2}}

Now we show a lower bound for the case of $\bb{R}_{\infty}$.

Consider the following system $F$ of tropical polynomials in variables $x_1, \ldots, x_n, y$.
\begin{align*}
f_1 &= 0 \tp x_1 \tp y \ta 0,\\
f_{i+1} &=  0\tp x_i^{\tp d} \ta 0 \tp x_{i+1}, \text{for } i=1, \ldots, n-1,\\
f_{n+1} &= 0 \tp x_{n-1}^{\tp d} \ta 1 \tp x_n.
\end{align*}

This system clearly has no roots. Indeed, we can consecutively show that all coordinates of a root should
be finite and then the polynomials $f_n$ and $f_{n+1}$ give a contradiction.

Now consider the Macaulay non-homogeneous system with a matrix $M_{d^{n-1}-1}$.
We are going to construct a solution for it.
For a tropical monomial $x_{1}^{a_1} \ldots x_{n}^{a_n} y^{b}$ let its weight be
$$
a_1 + d a_2 + d^2 a_3 + \ldots + d^{n-1} a_n
$$
Note that the degree in $y$ is not counted.
Consider monomials whose $y$-degree coincides with their weight and let the corresponding coordinates of $\vec{z}$ be equal to $0$.
For all other monomials let the corresponding coordinates of $\vec{z}$ to be equal to $\infty$.
We show that indeed, this provides a solution to $M_{d^{n-1}-1} \tp \vec{z}$.
Consider the graph on the coordinates of solution in which two coordinates are connected if the corresponding
monomials appear in the same row of Macaulay matrix $M_{d^{n-1}-1}$.
It is not hard to see that all the monomials on which our solution is finite constitute a connected component of the graph, containing zero coordinate.
Moreover, due to the constraint on the size of the matrix, no monomials in this component contain $x_n$ variable.
Thus, all the rows of the Macaulay matrix are satisfied.

\begin{remark}
For the min-plus case note that the same observation as in the case of $\bb{R}$ works.
Just consider the system of min-plus polynomials
\begin{align*}
&(0 \tp x_1 \tp y,  0),\\
&(0\tp x_i^{\tp d},  0\tp x_{i+1}), \text{for } i=1, \ldots, n-1,\\
&(0\tp x_{n-1}^{\tp d},  1 \tp x_n).
\end{align*}
\end{remark}

\section{Tropical polynomials vs. Min-plus polynomials} \label{sec:trop_vs_min-plus}

In this section we show that there is a tight connection between
the sets of roots of systems of min-plus polynomials and of tropical polynomials.
We will later use this connection to obtain the min-plus dual Nullstellensatz.

A connection in one direction is simple.

\begin{lemma}
Over both $\bb{R}$ and $\bb{R}_{\infty}$ for any given tropical polynomial system we can construct a system of min-plus polynomials
over the same set of variables, with the same set of roots and of the same degree.
\end{lemma}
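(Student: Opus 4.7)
The plan is to translate each tropical polynomial $f_i \in F$ into a finite collection of min-plus polynomials independently, then take the union over $i = 1, \ldots, k$. Write $f_i = m_{i,1} \ta \cdots \ta m_{i,s_i}$ and, for each index $j$, set $g_{i,j} = \bigoplus_{\ell \neq j} m_{i,\ell}$ (with the convention that the empty tropical sum is $\infty$). I will claim that the min-plus system consisting of the pairs $(f_i, g_{i,j})$ for $1 \leq i \leq k$ and $1 \leq j \leq s_i$ has the same set of roots as $F$, over the same set of variables, and with the same (total) degree.

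The key step is the following characterization, applied to a single tropical polynomial $f = m_1 \ta \cdots \ta m_s$: the point $\vec{a}$ is a tropical root of $f$ if and only if $g_j(\vec{a}) = f(\vec{a})$ for every $j \in \{1,\ldots,s\}$, where $g_j = \bigoplus_{\ell \neq j} m_\ell$. For the forward direction, if $f(\vec{a}) = \infty$ then every $m_\ell(\vec{a})$ is $\infty$ and hence every $g_j(\vec{a}) = \infty = f(\vec{a})$; if $f(\vec{a})$ is finite then by the root condition the minimum is attained at two distinct indices, so removing any single monomial still leaves at least one minimizer, giving $g_j(\vec{a}) = f(\vec{a})$. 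For the backward direction, suppose $\vec{a}$ is not a root; then $f(\vec{a})$ is finite and the minimum is attained at a unique index $j^\ast$, so $g_{j^\ast}(\vec{a}) > f(\vec{a})$, contradicting the equation indexed by $j^\ast$.

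Putting this together, $\vec{a}$ is a common root of $F$ iff it is a root of each $f_i$, iff $f_i(\vec{a}) = g_{i,j}(\vec{a})$ for all admissible pairs $(i,j)$, which is precisely the root condition for the min-plus system $\{(f_i, g_{i,j})\}$. By construction the variables are unchanged. Since $g_{i,j}$ is obtained from $f_i$ by deleting one monomial, $\deg g_{i,j} \leq \deg f_i$, so the degree of the min-plus polynomial $(f_i, g_{i,j})$, defined as $\max(\deg f_i, \deg g_{i,j})$, equals $\deg f_i$; hence the constructed system has the same degree as $F$. The argument is insensitive to whether the ground semiring is $\bb{R}$ or $\bb{R}_\infty$, because the characterization only refers to the definition of a tropical root.

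I do not expect a substantial obstacle; the whole proof hinges on the elementary observation that the minimum of a finite list of values in $\bb{R}_\infty$ is attained at least twice (or is $\infty$) if and only if omitting any single entry of the list does not change the minimum. The only edge case worth noting is a single-monomial $f_i$ (i.e.\ $s_i = 1$), which over $\bb{R}$ has no roots and is correctly captured by the pair $(m_{i,1}, \infty)$ under the empty-tropical-sum convention (alternatively, such $f_i$ can be removed after trivial bookkeeping).
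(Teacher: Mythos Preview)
Your proposal is correct and follows essentially the same approach as the paper: for each tropical polynomial $f=\min_j m_j$ you introduce the min-plus equations $f=g_j$ with $g_j=\min_{\ell\neq j}m_\ell$, which is exactly the paper's construction. Your write-up is in fact more thorough than the paper's, since you verify the degree claim and handle the $\infty$ and single-monomial edge cases explicitly.
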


\begin{proof}
Let $T$ be some tropical polynomial system over $\bb{R}$.
For each polynomial $f \in T$ we construct a min-plus polynomial system
over the same set of variables which is equivalent to $f$.

For this let
\begin{equation} \label{eq.row}
f = \min\{l_1, l_2, \ldots, l_m\},
\end{equation}
where $l_i$'s are tropical monomials.

It is easy to see that the minimum in~\eqref{eq.row} is attained at
least twice iff for all $i = 1, \ldots, m$
it is true that
\begin{align*}
& \min\{l_1, ..., l_{i-1}, l_{i}, l_{i+1}, ..., l_{m}\}
= \\
& \min\{l_1, ..., l_{i-1}, l_{i+1}, ..., l_{m}\}.
\end{align*}
These equations are min-plus polynomial equations and thus we have that any tropical polynomial is equivalent
to a system of min-plus polynomials. To get a min-plus system equivalent to the tropical system
we just unite min-plus systems for all polynomials of $T$.

Exactly the same analysis works for the case of $\bb{R}_{\infty}$.
\end{proof}

In the opposite direction we do not have such a tight connection,
but the connection we show below still preserves many properties.

We first for a given min-plus polynomial system $A$ construct a corresponding tropical polynomial
system $T$ and then prove a relation between $A$ and $T$.

Let us denote variables of $A$ by $(x_{1}, \ldots, x_{n})$.
The tropical polynomial system $T$ for each variable $x_{i}$ of $A$
will have two variables $x_{i}$ and $x_{i}^{\prime}$,
thus the set of variables of $T$ will be $(x_{1}, \ldots, x_{n}, x^{\prime}_{1}, \ldots, x^{\prime}_{n})$.

Polynomial system $T$ consists of the following polynomials.
\begin{enumerate}
  \item For each $i=1,\ldots,n$ we add to $T$ a polynomial
  $$
  x_i \ta x_i^{\prime}.
  $$
  \item Let $\left(\min_j m_j(\vec{x}), \min_p l_p(\vec{x})\right)$ be an arbitrary polynomial of $A$.
  For each $p$ we add to $T$ a tropical polynomial
  \begin{equation} \label{eq:reduction1}
  \min\left(m_1(\vec{x}), m_1(\vec{x}^{\prime}), \ldots, m_k(\vec{x}), m_k(\vec{x}^{\prime}), l_p(\vec{x})\right).
  \end{equation}
  For each $j$ we add to $T$ a tropical polynomial
  \begin{equation} \label{eq:reduction2}
  \min\left(l_1(\vec{x}), l_1(\vec{x}^{\prime}), \ldots, l_k(\vec{x}), l_k(\vec{x}^{\prime}), mß_j(\vec{x})\right).
  \end{equation}
  We denote monomials $m_1, \ldots, m_k$ in these polynomials by $m$-monomials. We denote monomials $l_1, \ldots, l_k$ by $l$-monomials. 
\end{enumerate}

This completes the construction of $T$. Note that the maximal degree of polynomials in $T$
is equal to the maximal degree of polynomials in $A$.
Now we are ready to show how $A$ and $T$ are related.

\begin{lemma} \label{lem:min-plus_to_trop}
There is an injective (classical) linear transformation $H \colon \bb{R}_{\infty}^n \to \bb{R}_{\infty}^{2n}$ such that
all the roots of $T$ lie in $Im(H)$
and the image of the set of roots of $A$ coincides with the set of roots of $T$.
The same is true for the semiring $\bb{R}$.
\end{lemma}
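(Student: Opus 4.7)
The plan is to take for $H$ the diagonal embedding $H\colon \bb{R}_\infty^n \to \bb{R}_\infty^{2n}$ defined by $H(\vec{x}) = (\vec{x}, \vec{x})$, so that $x_i \mapsto (x_i, x_i)$. This map is clearly (classically) linear and injective, and the rest of the proof consists in verifying that it has the two required properties: every root of $T$ lies in $\mathrm{Im}(H)$, and the image under $H$ of the root set of $A$ is exactly the root set of $T$.

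First I would verify that every root of $T$ lies in $\mathrm{Im}(H)$. For this, observe that each polynomial of type 1, namely $x_i \ta x'_i = \min(x_i, x'_i)$, has exactly two monomials, and the tropical root condition forces either equality $x_i = x'_i$ (when the minimum is finite) or $x_i = x'_i = \infty$. In either case the two halves of any root coincide, so every root of $T$ is of the form $H(\vec{x}_0)$ for some $\vec{x}_0 \in \bb{R}_\infty^n$.

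Next I would establish that $\vec{x}_0$ is a root of $A$ iff $H(\vec{x}_0)$ is a root of $T$. Fix an arbitrary min-plus polynomial $(\min_j m_j, \min_p l_p)$ of $A$. The key point is that when we evaluate a polynomial of the form~\eqref{eq:reduction1} at $(\vec{x}_0, \vec{x}_0)$, each pair $m_j(\vec{x}), m_j(\vec{x}')$ contributes the same value $m_j(\vec{x}_0)$. Consequently, the value $\min_j m_j(\vec{x}_0)$ is automatically attained by at least two distinct monomials of the tropical polynomial, and therefore the tropical root condition on~\eqref{eq:reduction1} reduces to $\min_j m_j(\vec{x}_0) \leq l_p(\vec{x}_0)$ (with the convention $\infty \leq \infty$). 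Requiring this for every $p$ yields $\min_j m_j(\vec{x}_0) \leq \min_p l_p(\vec{x}_0)$, and the symmetric analysis of the polynomials of the form~\eqref{eq:reduction2} gives the reverse inequality, so the conjunction of all the type-2 conditions attached to this polynomial of $A$ is exactly the min-plus root equation $\min_j m_j(\vec{x}_0) = \min_p l_p(\vec{x}_0)$. Running this over all polynomials of $A$ completes the equivalence.

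I do not anticipate any deep obstacle. The only points requiring care are the routine bookkeeping around infinite values, namely checking that the ``attained twice or is $\infty$'' clause of the tropical root condition matches the min-plus equality $f(\vec{x}_0)=g(\vec{x}_0)$ in all boundary cases (both sides infinite, only one side infinite, and so forth), together with the harmless technicality that, when two of the monomials listed in~\eqref{eq:reduction1} or~\eqref{eq:reduction2} happen to share an exponent vector, they are merged into a single monomial carrying the tropical sum of their coefficients, which does not affect the counting of minimum attainments. The argument is identical over $\bb{R}$ and over $\bb{R}_\infty$ and proceeds along the same lines as the corresponding linear-polynomial statement in~\cite{GP13Complexity}.
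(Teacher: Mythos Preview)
Your proposal is correct and follows essentially the same approach as the paper: both take $H(\vec{a})=(\vec{a},\vec{a})$, use the type-1 polynomials to force $x_i=x_i'$, and then exploit the duplication $m_j(\vec{x}),m_j(\vec{x}')$ to reduce the tropical root condition on \eqref{eq:reduction1} and \eqref{eq:reduction2} to the two inequalities whose conjunction is the min-plus equation. Your write-up is slightly more careful about the $\infty$ case and the monomial-merging technicality, but these do not change the argument.
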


\begin{proof}
Let $H(\vec{a}) = (\vec{a},\vec{a})$ for all $\vec{a}$.
Clearly, $H$ is an injective linear transformation.

Note that the polynomial of $T$ of the first type $x_i \ta x_i^{\prime}$ is satisfied iff $x_i = x_i^{\prime}$.
Thus all the roots of $T$ lie in the image of $H$.

If there is a root $\vec{a}$ to $A$ then it is easy to see that its image $(\vec{a},\vec{a})$ under $H$ satisfies
all the polynomials of the form~\eqref{eq:reduction1} and~\eqref{eq:reduction2} in $T$.
Indeed, since $\min_j m_j(\vec{a}) = \min_p l_p(\vec{a})$, then there is $j$ such that $m_j(\vec{a}) = \min_p l_p(\vec{a})$.
Then the minimum in the corresponding tropical polynomials~\eqref{eq:reduction1} will be attained in monomials $m_j(\vec{x})$ and $m_j(\vec{x}^\prime)$.
The symmetric argument works for tropical polynomials~\eqref{eq:reduction2}.

If there is a root of $T$, we already noted that it has the form $(\vec{a},\vec{a})$.
Then it is not hard to see that for each min-plus polynomial of $A$ we have $\min_j m_j(\vec{a}) = \min_p l_p(\vec{a})$.
Indeed, since corresponding tropical polynomials~\eqref{eq:reduction1} are satisfied, we have that $\min_j m_j(\vec{a}) \leq \min_p l_p(\vec{a})$.
On the other hand, tropical polynomials~\eqref{eq:reduction2} guarantee that 
$$\min_j m_j(\vec{a}) \geq \min_p l_p(\vec{a}).$$

The proof works over both semirings $\bb{R}$ and $\bb{R}_\infty$.

\end{proof}

The sets of roots of tropical (or min-plus) systems of polynomials are called \emph{tropical (respectively, min-plus) prevarieties}.
In particular, it follows that the classes of tropical prevarieties and min-plus prevarieties are topologically equivalent.

To illustrate possible applications of this connection we deduce another proof of the min-plus dual Nullstellensatz with somewhat worse values of $N$ directly from  the tropical dual Nullstellensatz.

We present the proof for the semiring $\bb{R}$. Exactly the same proof works also for $\bb{R}_\infty$.

As usually one direction is simple, that is
if a system $F$ has a root $\vec{a}$, then the Macaulay min-plus linear system $\Ml_N \tp \vec{y} = \Mr_N \tp \vec{y}$ for any $N$ also has a solution:
just let each coordinate $y_I$ of $\vec{y}$ to be equal to $\vec{a}^I$.

For the opposite direction, suppose the system $\Ml_N \tp \vec{y} = \Mr_N \tp \vec{y}$  for $N = (2n+2)(d_1 + \ldots + d_k)$ has a solution $\vec{a}$.
For the min-plus polynomial system $F$ consider the corresponding tropical polynomial system $T$ from the previous section.
Let us denote by $M_N^{\p}$ its Macaulay matrix. We will show that the tropical linear system $M_N^{\p} \tp \vec{z}$ has a solution.
From this by Theorem~\ref{thm:main} it follows immediately that $T$ has a root (note that the number of variables is $2n$, hence the change in the value of $N$), and by Lemma~\ref{lem:min-plus_to_trop}
we have that $F$ has a root.

Thus it is left to construct a solution to the tropical system $M_N^{\p} \tp \vec{z}$ based on the solution to the min-plus system $\Ml_N \tp \vec{y} = \Mr_N \tp \vec{y}$.
The construction is straightforward: for each monomial $m(\vec{x},\vec{x}^\prime)$ we partition it into two parts $m_1(\vec{x}) \tp m_2(\vec{x}^\prime)$ containing variables
$\vec{x}$ and $\vec{x}^\prime$ respectively and we let the variable of $\vec{z}$ corresponding to $m(\vec{x},\vec{x}^\prime)$ to be equal to the variable of $\vec{y}$ corresponding to the monomial $m_1(\vec{x})\tp m_2(\vec{x})$.

Now we have to check that all the rows of the system $M_N^{\p} \tp \vec{z}$ are satisfied.
This is obviously true for the rows corresponding to the polynomials $\vec{x}^I \tp \vec{x}^{\prime I^\prime} \tp (x_i \ta x_i^\prime)$,
since we clearly assign the same value to the variables corresponding to both monomials.
For the polynomials of the form~\eqref{eq:reduction1} we consider the corresponding equation in $\Ml_N \tp \vec{y} = \Mr_N \tp \vec{y}$.
Since the minimum in them is attained in $m$-monomials in the corresponding row of $M_N^{\p} \tp \vec{z}$ the minimum will be attained in
two corresponding $m$-monomials. The same works for the polynomials of the form~\eqref{eq:reduction2}.

\section{Tropical and Min-plus Primary Nullstellens\"atze} \label{sec:primary_nullstellensatz}

Now we will deduce primary forms of the tropical and min-plus Nullstellens\"atze.
We start with the min-plus primary Nullstallensatz.

\begin{proof}[Proof of Theorem~\ref{thm.min-plus-primary}]
We will use the min-plus linear duality for the proof of this theorem.
We start with the case of the semiring $\bb{R}$.

By Theorem~\ref{thm.min-plus1} the system of polynomials $F$ has no roots over $\bb{R}$ iff the corresponding Macaulay linear system
$$
\Ml_N \tp \vec{y} = \Mr_N \tp \vec{y}
$$
has no finite solution.
This system is equivalent to the system of min-plus inequalities
$$
\left(
  \begin{array}{c}
    \Ml_N \\
    \Mr_N \\
  \end{array}
\right) \tp \vec{x} \leq
\left(
  \begin{array}{c}
    \Mr_N \\
    \Ml_N \\
  \end{array}
\right) \tp \vec{x}.
$$

By Corollary~\ref{cor:min-plus_linear_duality} this system has no finite solution iff the dual system
$$
\left(
  \begin{array}{ccc}
    \Mr_N^{T} & \Ml_N^{T}\\
  \end{array}
\right) \tp
\left(
  \begin{array}{c}
 \vec{y} \\
    \vec{z} \\
  \end{array}
\right) <
\left(
  \begin{array}{cc}
    \Ml_N^{T} & \Mr_N^{T} \\
  \end{array}
\right) \tp
\left(
  \begin{array}{c}
 \vec{y} \\
    \vec{z} \\
  \end{array}
\right)
$$
has a solution $(\vec{y},\vec{z}) \neq (\infty,\ldots, \infty)$ (recall that we allow for both sides to be infinite in some rows).

This system can be interpreted back in terms of polynomials.
Indeed, note that now the columns of the matrices correspond to the equations of $F$ multiplied by some $\vec{x}^J$ and the rows correspond to
some monomials $\vec{x}^I$. Thus a solution to the system corresponds to the sum of equations of $F$ multiplied by some monomials, such that
each coefficient of the sum on the left-hand side is smaller than the respective coefficient of the sum on the right-hand side.
The fact that we allow both sides to be infinite in some row corresponds to the fact that some monomials might
not occur in the sum.
The fact that we allow infinite coordinates in the solution corresponds to the fact that we do not have to use all the polynomials of the form $(\vec{x}^I \tp f_j, \vec{x}^I \tp g_j)$
in the algebraic combination.

The proof of the second part of the theorem is almost the same.
The only difference is that this time we should use non-homogeneous Macaulay system, which by an application of Lemma~\ref{lem:min-plus_linear_duality} results in a linear combination
of polynomials with a finite constant term.
\end{proof}

Now we proceed to the tropical primary Nullstellensatz.

\begin{proof}[Proof of Theorem~\ref{thm.tropical-primary}]
By Theorem~\ref{thm:main1} the system of polynomials $F$ has no roots over $\bb{R}$ iff the corresponding Macaulay system
$$
M_N \tp \vec{y}
$$
has no finite solution.

By Corollary~\ref{cor:tropical_linear_duality} this is equivalent to the fact that there is $\vec{z} \neq (\infty, \ldots, \infty)$ in $\bb{R}_{\infty}^{n}$
such that in each row of
$$
M_{N}^{T} \tp \vec{z}
$$
the minimum is attained only once or is equal to $\infty$ and for each two rows the minimums are in different columns.
Recall, that each column in $M^T_N$ corresponds to a polynomial $\vec{x}^J \tp f_j$ and the rows correspond to the monomials $\vec{x}^I$
in these polynomials. Thus $\vec{z}$ corresponds to the algebraic combination of polynomials of $F$ and the properties of $\vec{z}$
described above are equivalent to the singularity of the corresponding algebraic combination.

The proof of the $\bb{R}_\infty$ case is completely analogous.
\end{proof}

The lower bounds on $N$ in Theorems~\ref{thm.min-plus-primary} and~\ref{thm.tropical-primary} can be proved along the same lines as the proofs above by considering polynomial systems constructed in Subsection~\ref{sec:lower_bounds}. It was shown there that for these polynomial systems the Macaulay linear systems of certain size have solutions. Using tropical and min-plus linear duality and interpreting the results in terms of polynomials just like in the proofs above we can show that these polynomial systems provide lower bounds for the tropical and min-plus primal Nullstellens{\"a}tze.

\section{Linear duality in min-plus algebra} \label{sec:linear_duality}

\subsection{Min-plus linear duality}

In this subsection we prove Lemma~\ref{lem:min-plus_linear_duality} on the duality for min-plus linear systems.

The proof of this lemma is based on the interpretation of min-plus linear systems as mean payoff games.
Namely, given a min-plus linear system we construct a mean payoff game $G$ such that a solution to the system corresponds to a winning strategy for one of the players.
This connection between min-plus linear systems and mean payoff games was established in~\cite{akian12mean_payoff}.
We present the details here for the sake of completeness.

Given two matrices $A,B \in \bb{R}^{m\times n}_{\infty}$ the corresponding mean payoff game $G$ can be described as follows.
Consider a directed bipartite graph which vertices on the left side are $r_1, \ldots, r_m$
and vertices on the right side are $c_1, \ldots, c_n$. Left-side vertices correspond to the rows of matrices $A$ and $B$
and right-hand side vertices correspond to the columns of the matrices.
From each vertex $r_i$ there is an edge to a vertex $c_j$ labeled by $-a_{ij}$.
From each vertex $c_j$ there is an edge to a vertex $r_i$ labeled by $b_{ij}$.
We denote the label of an edge $(v,u)$ by $w(v,u)$.
Thus $w(r_i, c_j) = -a_{ij}$ and $w(c_j, r_i) = b_{ij}$.
There are two players which we call the row-player and the column-player and who in turns are moving
a token over the vertices of the graph.
In the beginning of the game the token is placed to some fixed vertex. On each turn one
of the two players moves the token to some other node of the graph. Each turn of the
game is organized as follows. If the token is currently in some node $r_i$ then the column-player can move it to any
node $c_j$ (the column-player chooses a column). If, on the other hand, the token is in some node $c_j$ then the row-player can move the token to any node
$r_i$ (the row-player chooses a row). The game is infinite and the process of the game can be described by a sequence of nodes
$v_{0}, v_{1}, v_{2}, \ldots$ which the token visits. 
The column-player wins the game if
\begin{equation} \label{eq.mpg}
\liminf_{t \to \infty} \frac 1t \sum_{i=1}^{t} w(v_{i-1}, v_{i}) > 0.
\end{equation}
If this limit is negative then the row-player wins. If the limit is zero we have a draw.
If some entries of matrices $A, B$ are infinite we assume that there are no corresponding edges in the graph.
Alternatively, we can assume that there are edges labeled by $\infty$ and the player following such an edge
losses immediately.

The process of the game can be viewed in the following way. After each move of the column-player
he receives from the row-player some amount $-a_{ij}$ and after each move of the row-player
he receives from the column-player some amount $-b_{ij}$. The goal of both players is to maximize their amount.
If one of them can play in such a way that his amount grows to infinity as the game proceeds, then he wins.
And if the amounts of the players always stay between some limits, then the result of the game is the draw.

Note that if all the entries of the matrices are finite the game has a complete bipartite graph and it is easy to see that
this property implies that the winner of the game does not depend on the starting position.
The situation is different in the case of matrices with entries from $\bb{R}_{\infty}$.

For the constructed game $G$ the following property holds. It is implicit in~\cite{akian12mean_payoff}.
\begin{lemma} \label{lem:non-losing}
There is a finite solution to $A \tp \vec{x} \leq B \tp \vec{x}$ iff the column-player has a non-losing strategy starting from any position.

There is a solution $\vec{x} \neq (\infty, \ldots, \infty)$ to $A \tp \vec{x} \leq B \tp \vec{x}$
iff the column-player has a non-losing strategy starting from some position.

There is a solution to $A \tp \vec{x} \leq B \tp \vec{x}$
with a finite coordinate $x_i$ iff the column-player has a non-losing strategy starting from the position $c_i$.

There is a solution to $A \tp \vec{x} \leq B \tp \vec{x}$
such that the $j$-th coordinate of $A \tp \vec{x}$ is finite iff the column-player has a non-losing strategy starting from the position $r_j$.
\end{lemma}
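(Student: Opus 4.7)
The plan is to prove the lemma by setting up a direct correspondence between solutions $\vec{x}$ of $A \tp \vec{x} \leq B \tp \vec{x}$ and non-losing potentials for the column-player in the mean payoff game $G$. The key idea is to view $\vec{x}$ (with a sign flip) as a potential $\phi$ on the vertices of $G$: set $\phi(c_j) = -x_j$ and $\phi(r_i) = -(A \tp \vec{x})_i$. Let $W$ denote the set of vertices on which $\phi$ is finite. Then the system inequality $A \tp \vec{x} \leq B \tp \vec{x}$ is exactly the condition that the augmented sum $S_t + \phi(v_t)$ can be kept non-decreasing by the column-player no matter how the row-player responds, which (since $\phi$ is bounded on $W$) gives a non-losing strategy from any starting position in $W$.

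For the forward direction I would construct the column-player's strategy explicitly: at $r_i \in W$, move to a column $c_{j^*}$ achieving the minimum $(A \tp \vec{x})_i = \min_j(a_{ij} + x_j)$. A direct computation gives $w(r_i, c_{j^*}) + \phi(c_{j^*}) - \phi(r_i) = 0$, and since $a_{ij^*} + x_{j^*}$ is finite we have $c_{j^*} \in W$. If instead the row-player moves from $c_j \in W$ along an edge to $r_i$, the existence of this edge forces $b_{ij} < \infty$, so $(B \tp \vec{x})_i \leq b_{ij} + x_j < \infty$ and the system inequality yields $(A \tp \vec{x})_i < \infty$, i.e.\ $r_i \in W$; at the same time $b_{ij} + x_j \geq (A \tp \vec{x})_i$ gives $w(c_j, r_i) + \phi(r_i) - \phi(c_j) \geq 0$, so the augmented sum does not decrease. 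All four parts of the lemma then follow from the simple observation that $c_i \in W$ iff $x_i$ is finite, and $r_j \in W$ iff $(A \tp \vec{x})_j$ is finite.

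For the converse I would invoke the structural theory of mean payoff games from~\cite{akian12mean_payoff}. Let $W^*$ denote the set of vertices from which the column-player has a non-losing strategy. Positionality of optimal strategies yields a finite value function $V$ on $W^*$ satisfying fixed-point relations of the form $V(r_i) = \max_{j:\, c_j \in W^*}(V(c_j) - a_{ij})$ for $r_i \in W^*$ and $V(c_j) \leq V(r_i) + b_{ij}$ whenever $c_j \in W^*$; moreover $W^*$ is closed under row-player moves out of column vertices. Setting $x_j = -V(c_j)$ for $c_j \in W^*$ and $x_j = \infty$ otherwise, the first relation gives $(A \tp \vec{x})_i = -V(r_i)$ for $r_i \in W^*$ and the second gives $(B \tp \vec{x})_i \geq -V(r_i)$, so $A \tp \vec{x} \leq B \tp \vec{x}$, with the finiteness pattern of $\vec{x}$ matching the membership pattern of $W^*$ exactly as required by each of the four statements.

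The main obstacle is the proper treatment of $\infty$-coordinates and, in particular, the closure property of $W^*$ under row-player moves out of column vertices: this is what lets us verify the system inequality even at rows $i$ where some of the $(A \tp \vec{x})_i$ and $(B \tp \vec{x})_i$ are $\infty$. Once this closure---together with the existence of a finite positional value on the induced subgame---is imported from~\cite{akian12mean_payoff}, the rest of the argument reduces to the elementary potential computation sketched above.
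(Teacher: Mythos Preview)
Your proposal is correct and takes essentially the same approach as the paper: both view the solution $\vec{x}$ as a potential (the paper calls it an ``amount'') for the column-player, derive the non-losing strategy from the monotonicity of the augmented sum, and for the converse extract $\vec{x}$ from a value function on the winning region. The only difference is presentational---the paper defines $x'_j$ directly as the minimal starting amount needed at $c_j$ to stay nonnegative and argues informally, whereas you explicitly name the positional value function and the closure property of $W^*$ as imports from~\cite{akian12mean_payoff}; these are the same object and the same facts.
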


\begin{proof}
We always can add the same number to all coordinates of the solution.
In particular we have that there is a solution $\vec{x} \neq (\infty,\ldots,\infty)$ to $A \tp \vec{x} \leq B \tp \vec{x}$ iff there is a solution
such that all $x_j \geq 0$ and $\min_{j}{x_j} = 0$.

We are going to show that the existence of such a solution is equivalent to the existence of non-losing
strategy for the column-player.
The proof is very intuitive, but to make the intuition clear we have to explain what does $\vec{x}$ mean in terms of the game.
To do this assume that the column-player has a non-losing strategy starting from some position. We know that if the player follows the strategy,
then his amount does not decrease to $-\infty$. But it might become negative at some moments of the game.
For an arbitrary vertex $c_j$ let us denote by $x^{\p}_j$ the minimal amount such that if the game starts in $c_j$
and the column-player has $x^{\p}_j$ in the beginning then he can never go below zero.
If in some position $c_j$ the column-player has no winning strategy we naturally set $x^\p_j = \infty$.
It turns out that the vector $\vec{x}^\p$ is a solution to the min-plus linear system.

Indeed, suppose that the column-player has a non-losing strategy and consider $\vec{x}^{\p}$ corresponding to it.
Assume that we are in position $c_j$. Then for each move of the row-player $(j,i)$ there is a move of the column-player $(i,k)$
such that the remaining amount of the column-player after these two moves is at least $x^\p_k$ (so he does not go below his budget in the future).
That is for each $i$ and $j$ there is a $k$ such that
$
x^\p_j + b_{ij} - a_{ik}  \geq x^\p_k
$
or
$$
x^\p_k + a_{ik} \leq x^\p_j  + b_{ij}.
$$
And this precisely means that $A \tp \vec{x}^\p \leq B \tp \vec{x}^\p$.

Now, suppose that there is a solution $\vec{x}$ to the min-plus linear system.
Let us give the column-player the amount $x_j$ if the game starts in $c_j$.
Then reversing the argument we have that
for each $i$ and $j$ there is $k$ such that
$
x_j + b_{ij} - a_{ik} \geq x_k
$.
And this means that for each position $c_j$ and for each move $(j,i)$ of the row-player
there is a move $(i,k)$ of the column-player, such that the amount of the column-player does not go below $x_k$.
Thus we have that the column-player indeed does not go below the amounts $\vec{x}$ and thus does not lose the game
if he makes moves in the described way.

This analysis shows all the statements of the lemma except for the last one. For this statement note that the column-player does not lose
in the position $r_j$ if he has a move to some position $c_i$ such that first, he does not lose immediately, and second,
he does not lose in position $c_i$. Thus $a_{ij}$ is finite and $x_i$ is finite and thus the $i$-th coordinate of $A \tp \vec{x}$ is finite. It is easy to reverse this argument.
\end{proof}

Next observation seems to be a new step towards min-plus linear duality.

\begin{lemma} \label{lem:winning}
There is a finite solution to $A \tp \vec{x} < B \tp \vec{x}$ iff the column-player has a winning strategy starting from any position.

There is a solution $\vec{x} \neq (\infty, \ldots, \infty)$ to $A \tp \vec{x} < B \tp \vec{x}$ iff the column-player has a winning strategy starting from some position.

There is a solution to $A \tp \vec{x} < B \tp \vec{x}$
with a finite coordinate $x_i$ iff the column-player has a winning strategy starting from the position $c_i$.

There is a solution to $A \tp \vec{x} < B \tp \vec{x}$
such that the $j$-th coordinate of $A \tp \vec{x}$ is finite iff the column-player has a winning strategy starting from the position $r_j$.

\end{lemma}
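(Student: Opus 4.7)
The plan is to mirror the proof of Lemma~\ref{lem:non-losing}, using the strict inequality to upgrade ``non-losing'' to ``winning''; the quantitative content of the strict inequality produces a positive per-round gap, which is exactly what $\liminf > 0$ requires.

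For the direction from solutions to winning strategies, I would take $\vec{x}$ satisfying $A \tp \vec{x} < B \tp \vec{x}$ in the appropriate restricted sense and set
\[
\delta = \min\bigl\{(B \tp \vec{x})_i - (A \tp \vec{x})_i : (B \tp \vec{x})_i \text{ is finite}\bigr\} > 0.
\]
From a column vertex $c_j$ with $x_j$ finite, once the row-player moves to $r_i$ (which must have $b_{ij} + x_j$ finite, else the row-player has followed an $\infty$-edge and loses immediately), the column-player picks $k$ achieving $a_{ik} + x_k = (A \tp \vec{x})_i$, so
\[
x_k + a_{ik} + \delta \leq b_{ij} + x_j.
\]
Exactly as in Lemma~\ref{lem:non-losing}, this choice keeps the token in columns with finite $x_{\cdot}$ and makes the quantity ``accumulated money minus $x_{\cdot}$ at the current vertex'' increase by at least $\delta$ per round; since the potentials $x_{\cdot}$ over the finitely many reachable columns are bounded, the running sum grows at least linearly, and dividing by $2t$ moves gives $\liminf \geq \delta/2 > 0$. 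The same case analysis as in Lemma~\ref{lem:non-losing} ties the set of starting positions from which this strategy is well-defined to the four supports stated in the lemma.

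For the reverse direction I would use the standard perturbation trick for mean payoff games combined with Lemma~\ref{lem:non-losing}. Fix a positional winning strategy $\sigma$ for the column-player, which exists by the Ehrenfeucht--Mycielski theorem on positional optimal strategies. Against any positional row-player response, the induced play is ultimately periodic, and $\liminf$ equals the mean weight of the resulting cycle; since $\sigma$ is winning, every such cycle has strictly positive mean, and as there are only finitely many row-player positional strategies, the minimum cycle-mean is some $c > 0$. Now form a perturbed game $G_\epsilon$ by replacing each finite row-to-column weight $-a_{ij}$ by $-a_{ij} - \epsilon$ while leaving infinite entries (and hence the forbidden moves) unchanged; in a bipartite alternating cycle of length $\ell$ exactly $\ell/2$ edges are row-to-column, so every cycle's mean decreases by exactly $\epsilon/2$. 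For any $\epsilon < 2c$ the strategy $\sigma$ is still winning, hence non-losing, in $G_\epsilon$ from the same starting vertices. Applying Lemma~\ref{lem:non-losing} to $G_\epsilon$ yields a vector $\vec{x}$, with the same support constraints, satisfying $(A + \epsilon) \tp \vec{x} \leq B \tp \vec{x}$, that is, $(A \tp \vec{x})_i + \epsilon \leq (B \tp \vec{x})_i$ in every row where the right-hand side is finite, which is precisely $A \tp \vec{x} < B \tp \vec{x}$.

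The main obstacle I anticipate is the bookkeeping for the convention ``$\infty < \infty$'' together with the four variants simultaneously: one must check that leaving the infinite entries of $A$ untouched under the perturbation preserves both the edge structure of the game (so $\sigma$ remains legal in $G_\epsilon$) and the correspondence, identical to that in Lemma~\ref{lem:non-losing}, between a winning starting vertex and the coordinate of $\vec{x}$ or of $A \tp \vec{x}$ that the lemma requires to be finite.
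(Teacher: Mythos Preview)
Your proposal is correct and takes essentially the same approach as the paper: both directions hinge on an $\epsilon$-perturbation that reduces the strict system to a non-strict one and then invoke Lemma~\ref{lem:non-losing}. The paper is slightly terser---it perturbs $B$ rather than $A$, invokes Lemma~\ref{lem:non-losing} as a black box in both directions, and simply asserts (without your positional-strategy cycle-mean argument) that a sufficiently small perturbation preserves the winning property---but the underlying idea is identical.
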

\begin{proof}
Suppose there is a solution $\vec{x}$ to $A \tp \vec{x} < B \tp \vec{x}$. Then for small enough positive $\eps$ there is a solution to
$A \tp \vec{x} \leq (B - \eps) \tp \vec{x}$, where we subtract $\eps$ from each entry of $B$.
Then by Lemma~\ref{lem:non-losing} there is a non-losing strategy for the column-player
in the mean payoff game $G^\p$ corresponding to the system $A \tp \vec{x} \leq (B - \eps) \tp \vec{x}$.
Let the column-player apply the same strategy to the game $G$ corresponding to $A \tp \vec{x} < B \tp \vec{x}$.
Then compared to the game $G^\p$ after $k$ moves the column-player will have at least the value $k\eps$
added to his amount. Since the amount of the column-player is bounded from below in $G^\p$ it will grow
to infinity in $G$. Thus in the game $G$ the column-player has a winning strategy.

For the opposite direction, assume that the column-player has a winning strategy. Then if we add a small enough $\eps$
to all payoffs of the row-player, the column-player will still have a winning strategy, which is in particular
non-losing. Thus we have by Lemma~\ref{lem:non-losing} that there is a solution $\vec{x}$ to $A \tp \vec{x} \leq (B  - \eps) \tp \vec{x}$,
where we subtract $\eps$ from each entry of $B$.
Clearly the very same $\vec{x}$ is a solution to $A \tp \vec{x} < B \tp \vec{x}$ and we are done.
\end{proof}

Now to get the Lemma~\ref{lem:min-plus_linear_duality} it is only left to use a duality of mean payoff games. For this note that for a game $G$ either the column-player has a winning strategy starting from some position, or the row-player has a non-losing strategy starting from the same position. Also note that if along with the game $G$ corresponding to the min-plus linear system $A \tp \vec{x} \leq B \tp \vec{x}$ we consider a game $G^\p$ corresponding to the min-plus linear system $B^T \tp \vec{x} \leq A^T \tp \vec{x}$, then the games $G$ and $G^\p$ are the same except the roles of the players switched.

\subsection{Tropical duality}

Suppose we are given a tropical linear system $A \tp \vec{x}$ for $A \in \bb{R}^{m\times n}$ and
we are interested whether it has a solution. First of all it is known that if the number of variables is greater than
the number of equations, then there is always a solution~\cite{DSS05rank}. So we can assume that $m \geq n$.
Next note that if we add the same number to all the entries in some row of $A$ then the set of solutions does not change.
One simple obstacle for $A \tp \vec{x}$ to have a solution is if we can add some numbers to all rows of $A$ and possibly
permute rows and columns in such a way that the minimums in the first $n$ rows of the resulting matrix are attained just
in entries $(1,1), (2,2), \ldots, (n,n)$. It is easy to see that if this is the case then there is no solutions to $A \tp \vec{x}$~\cite{DSS05rank}.
It turns out that this is the only obstacle. We give a proof below however we note that this is already implicit in~\cite{Grigoriev13complexity,IzhakianR2009rank}.

\begin{proof}[Proof of Lemma~\ref{lem:tropical_linear_duality}]
Given a tropical product of a matrix by a vector $A \tp \vec{a}$, where $A \in \bb{R}_{\infty}^{m \times n}$ it is convenient to introduce
a value $n_i(A \tp \vec{a})$ for all $i = 1, \ldots, m$ which is equal to the number of the column in which the finite minimum in the row
$i$ is situated (if there is one). If there are several minimums, $n_i(A \tp \vec{a})$ corresponds to the first one. When the matrix and the vector
are clear from the context we simply write $n_i$.

Denote by $C_i$ for $i=1, \ldots, n$ the matrix in $\bb{R}^{m\times n}$ with $1$ entries in the $i$-th
column and $0$ entries in other columns.
Denote by $R_i$ for $i=1, \ldots, n$ the matrix in $\bb{R}^{n\times m}$ with $1$ entries in the $i$-th
row and $0$ entries in other rows.
Note that $R_i = C_i^T$.

We will show the first part of the lemma. The proof of the second part is completely analogous.

Suppose we are given a matrix $A \in \bb{R}^{m\times n}_{\infty}$ and consider the tropical linear system $A \tp \vec{x}$.
As shown in~\cite{GP13Complexity} (cf. Section~\ref{sec:trop_vs_min-plus}) $\vec{x}$ is a solution to it iff for all small enough $\eps > 0$ $\vec{x}$ is a solution to the following min-plus system:
$$
\left(
  \begin{array}{c}
    A + \eps C_1  \\
    A + \eps C_2  \\
    \vdots \\
    A + \eps C_n\\
  \end{array}
\right) \tp \vec{x}
\leq
\left(
  \begin{array}{c}
    A  \\
    A  \\
    \vdots \\
    A\\
  \end{array}
\right) \tp \vec{x}.
$$

By Lemma~\ref{lem:min-plus_linear_duality} this system has a solution $\vec{x}$ with finite coordinates $x_i$ for $i \in S$ if and only if the system
\begin{align} \label{eq:tropical_transpose}
\begin{split}
&\left(
  \begin{array}{cccc}
    A^T  & A^T & \cdots & A^T\\
  \end{array}
\right) \tp \vec{y}
<\\
&\left(
  \begin{array}{cccc}
    A^T + \eps R_1  & A^T + \eps R_2  & \cdots &  A^T + \eps R_n\\
  \end{array}
\right) \tp \vec{y}
\end{split}
\end{align}
has no solution $\vec{y}$ such that for some $i \in S$  the $i$-th coordinate of
$$
\left(
  \begin{array}{cccc}
    A^T  & A^T & \cdots & A^T\\
  \end{array}
\right) \tp \vec{y}
$$
is finite.

On the right-hand side of~\eqref{eq:tropical_transpose} we have a block matrix with blocks $A^T + \eps R_i$.

It is left to show that the system~\eqref{eq:tropical_transpose} has a specified solution iff there is $\vec{z}$ such that in each row of $A^T \tp \vec{z}$ the minimum is attained at least once or is equal to $\infty$, for each two rows with the finite minimums these minimums are in different columns and
for some $i \in S$ the $i$-th coordinate of $A^T \tp \vec{z}$ is finite.
Note that if $\vec{y}$ is a solution to~\eqref{eq:tropical_transpose} then in each row $i$, where the minimum on the left-hand side is finite  we have $m(i-1) < n_i \leq mi$, that is $n_i$ is in the $i$-th block.
Indeed, otherwise the minimum in this row in the left-hand side is greater or equal than the minimum in the right-hand side,
since the $i$-th rows on the left-hand side and on the right-hand side differ only in the $i$-th block.
Thus if $\vec{y}$ is a solution then for each row $i$ with the finite minimum there exists a column $j_i = n_i \pmod m$ of the $i$-th block
such that the minimum is attained in this column. Note, that for $i_1 \neq i_2$ with finite minimums in the rows of the system we have $j_{i_1} \neq j_{i_2}$.
Otherwise rows $i_1, i_2$ and columns $n_{i_1} = m i_1 + j_{i_1}, n_{i_2} = m i_2 + j_{i_1}$ will form a $2 \times 2$ subsystem
\begin{align*}
&\left(
\begin{array}{cc}
    a_{i_1, j_{i_1}}  & a_{i_1, j_{i_1}} \\
    a_{i_2, j_{i_1}}  & a_{i_2, j_{i_1}}  \\
  \end{array}
\right) \tp
\left(
\begin{array}{cc}
    y_{m i_1 + j_{i_1}} \\
    y_{m i_2 + j_{i_1}} \\
  \end{array}
\right)
<\\
&\left(
\begin{array}{cc}
    a_{i_1, j_{i_1}} + \eps  & a_{i_1, j_{i_1}} \\
    a_{i_2, j_{i_1}}  & a_{i_2, j_{i_1}}  + \eps \\
  \end{array}
\right) \tp
\left(
\begin{array}{cc}
    y_{m i_1 + j_{i_1}} \\
    y_{m i_2 + j_{i_1}} \\
  \end{array}
\right),
\end{align*}
with finite $y_{m i_1 + j_{i_1}}, y_{m i_2 + j_{i_1}}$, which is impossible.
Thus columns $j_{i}$ correspond to different columns of the matrix $A^T$.
Let us consider the tropical system
$$
A^T \tp \vec{z}
$$
and consider the following vector $\vec{z}$. For all $i$ with the finite minimum in the row $i$ let $z_{j_i} = y_{mi + j_i}$. Set all other coordinates of $\vec{z}$ to $\infty$.
For this $\vec{z}$ the minimum in each row is either infinite or is attained once and no two minimums are in the same column.
Indeed, if a finite minimum is attained twice for some row, then
for the same row of~\eqref{eq:tropical_transpose} we will have equality.
Note also that the $i$-th coordinate of
$$
\left(
  \begin{array}{cccc}
    A^T  & A^T & \cdots & A^T\\
  \end{array}
\right) \tp \vec{y}
$$
is finite iff the $i$-th coordinate of $ A^T \tp \vec{z}$ is finite.

In the opposite direction, suppose we have a $\vec{z}$ such that in each row $A^T \tp \vec{z}$ the minimum is either infinite or is attained once
and no two minimums are in the same column. Then for each row $i$ with the finite minimum consider a column $j_i$ in which this minimum is attained and let $y_{im+j_i} = z_{j}$. Set all other coordinates of $\vec{y}$ to $\infty$.
Then for any small enough $\eps$ we will have a solution of~\eqref{eq:tropical_transpose} and the $i$-th coordinate of $A^T \tp \vec{z}$ is finite iff
the $i$-th coordinate of
$$
\left(
  \begin{array}{cccc}
    A^T  & A^T & \cdots & A^T\\
  \end{array}
\right) \tp \vec{y}
$$
is finite.

\end{proof}

{\bf Acknowledgements}. The first author is grateful to Max-Planck Institut f\"ur
Mathematik, Bonn for its hospitality during the work on this paper.

Part of the work of the second author was done during the visit to Max-Planck Institut f\"ur
Mathematik, Bonn.

{\small
\bibliographystyle{abbrv}
\bibliography{bib/null}
}

\end{document}